\newtheorem{teo}{Theorem}[section]
\newtheorem{lem}[teo]{Lemma}
\newtheorem{pro}[teo]{Proposition}
\newtheorem{cor}[teo]{Corollary}
\theoremstyle{remark}
\newtheorem{rem}[teo]{Remark}
\theoremstyle{definition}
\newcommand{\R}{\mathbb{R}}
\newcommand{\N}{\mathbb{N}}
\newcommand{\C}{\mathbb{C}}
\newcommand{\Z}{\mathbb{Z}}
\newcommand{\ham}{\mathcal{H}_{\alpha(t)}}
\newcommand{\hamz}{\mathcal{H}_{\alpha(0)}}
\newcommand{\dom}{\mathcal{D}}
\newcommand{\gl}{\mathcal{G}_\lambda}
\newcommand{\g}{\mathcal{G}}
\renewcommand{\S}{\mathcal{S}}
\newcommand{\form}{\mathcal{F}_{\alpha(t)}}
\newcommand{\domf}{\mathcal{V}}
\newcommand{\ID}{\mathbb{I}}
\newcommand{\re}{\mathrm{Re}}
\newcommand{\im}{\mathrm{Im}}
\newcommand{\NN}{\mathcal{N}}
\newcommand{\A}{\mathcal{A}}
\newcommand{\B}{\mathcal{B}}
\newcommand{\I}{\mathcal{I}}
\newcommand{\psitr}{\widetilde{\psi}}
\newcommand{\phitr}{\widetilde{\phi}}
\newcommand{\vphitr}{\widetilde{\varphi}}
\newcommand{\xv}{\mathbf{x}}
\newcommand{\yv}{\mathbf{y}}
\newcommand{\zev}{\mathbf{0}}
\newcommand{\kv}{\mathbf{k}}
\newcommand{\dyv}{\,\mathrm{d}\yv}
\newcommand{\dkv}{\,\mathrm{d}\kv}
\newcommand{\dt}{\,\mathrm{d}t}
\newcommand{\dtau}{\,\mathrm{d}\tau}
\newcommand{\ds}{\,\mathrm{d}s}
\newcommand{\drho}{\,\mathrm{d}\varrho}
\newcommand{\dP}{\,\mathrm{d}p}
\newcommand\cL{{\mathcal{L}}}
\newcommand{\ep}{\varepsilon}
\newcommand{\al}{\alpha}
\renewcommand{\th}{\theta}
\newcommand{\la}{\lambda}
\newcommand{\const}{\mathcal{C}}
\newcommand{\clog}{C_{\mathrm{log},\beta}}
\newcommand{\one}{\mathds{1}}
\newcommand{\IN}[1]{\mathrm{int}\big(#1\big)}
\newcommand{\braket}[2]{\left\langle #1,#2 \right\rangle}
\newcommand{\f}[2]{\frac{#1}{#2}}
\newcommand{\tf}[2]{\tfrac{#1}{#2}}
\begin{document}

\title[Complete ionization for a non-autonomous point interaction model in $d=2$]{Complete ionization for a non-autonomous point interaction model in $d=2$}

\author[W. Borrelli]{William Borrelli}
\address[W. Borrelli]{Dipartimento di Matematica e Fisica, Universit\`a Cattolica del Sacro Cuore, Via dei Musei 41, I-25121, Brescia, Italy.} 
\email{william.borrelli@unicatt.it}

\author[R. Carlone]{Raffaele Carlone}
\address[R. Carlone]{Universit\`{a} ``Federico II'' di Napoli, Dipartimento di Matematica e Applicazioni ``R. Caccioppoli'', MSA, via Cinthia, I-80126, Napoli, Italy.}
\email{raffaele.carlone@unina.it}

\author[L. Tentarelli]{Lorenzo Tentarelli}
\address[L. Tentarelli]{Politecnico di Torino, Dipartimento di Scienze Matematiche ``G.L. Lagrange'', Corso Duca degli Abruzzi 24, 10129, Torino, Italy.} 
\email{lorenzo.tentarelli@polito.it}

\date{\today}


\begin{abstract}  We consider the two dimensional Schr\"odinger equation with a time dependent point interaction, which represents a model for the dynamics of a quantum particle subject to a point interaction whose strength varies in time. First, we prove global well-posedness of the associated Cauchy problem under general assumptions on the potential and on the initial datum. Then, for a monochromatic periodic potential (which also satisfies a suitable no-resonance condition) we investigate the asymptotic behavior of the survival probability of a bound state of the time-independent problem. Such probability is shown to have a time decay of order $\mathcal O((\log t/t)^2)$, up to lower order terms.

\end{abstract}
\maketitle

{\footnotesize
\emph{Keywords}: Schr\"odinger equation, ionization, point interactions, well-posedness.

\medskip

\emph{2020 MSC}: 35A01, 35B25, 35B40, 35J10, 35Q40, 35Q41, 81Q05, 30E99.
}



\section{Introduction}
In this paper we consider the two dimensional Schr\"odinger equation with time dependent point interaction, namely
\begin{equation}
 \label{eq:formal_equation}
 \imath\f{\partial\psi}{\partial t}=H(t)\psi,
\end{equation}
where, at any fixed time $t$, the operator  $H(t)$ is formally given by 
\begin{equation}
\label{eq:formal_point_interaction}
``H(t):=-\Delta+\alpha(t)\delta(\mathbf{x})\text{''},
\end{equation}
with $\alpha:\R\to\C$. The precise definition of the Hamiltonian $H(t)$ is given in Section \ref{sec:model} (see also Remark \ref{rem:delta}).
\medskip

Our main purpose is to establish complete ionization for the evolution of \eqref{eq:formal_equation}. More precisely, we start by proving (Theorem \ref{teo:wp}) global well-posedness of the Cauchy problem associated with \eqref{eq:formal_equation}, under general assumptions on $\alpha$ and on the initial datum. Then, in the so called \emph{monochromatic} case (i.e., $\alpha(t)$ as in \eqref{eq:monochrom}), we investigate the asymptotic behavior of the \emph{survival probability} of the $L^2$-normalized bound state associated with the sole eigenvalue of $H(0)$, denoted throughout by $\varphi_\alpha$. That is, we study the behavior of $\left|\braket{\varphi_{\alpha}}{\psi(t,\cdot)}\right|^2$, as $t\to+\infty$, where $\braket{\cdot}{\cdot}$ denotes the scalar product of $L^2(\R^2)$ and $\psi(t,\cdot)$ is the solution of \eqref{eq:formal_equation} with $\psi(0,\cdot)=\varphi_\alpha$. In particular, we show that the survival probability vanishes, as $t\to+\infty$, and establish its decay rate up to lower order terms (Theorem \ref{teo:cion}).
\medskip

The interest of the model under investigation is twofold. Time-dependent point interactions belong to a special class of perturbative models for which a completely rigorous analysis is possible; that is, the behavior of the survival probability can be estimated without a perturbative expansion (see, e.g., \cite{CCL} and references therein). On the other hand, the model under study provides an effective description of the microscopic dynamics of a quantum particle interacting with bosonic scalar quantum fields, in the so called \emph{quasi-classical limit} (see \cite{CCFO,CFO}). More precisely, in \cite{CCFO} the authors prove that time-dependent point interactions \eqref{eq:formal_equation} can be derived from a microscopic model for the dynamics of a quantum particle interacting with bosonic scalar quantum fields, in configurations where the fields are very intense and the average number of carriers is large. As explained in \cite{CCFO}, considering the coupling of the particle with two distinct species of fields is a situation not uncommon both in condensed matter physics and at high energies. A typical example is given by the description of a quantum particle coupled with both acoustic and optical phonons in a compound ionic crystal (see, e.g., \cite[ Chapter 4]{Ki}). Physically, the presence of two different species of phonons, having different scaling properties and dispersion relations, induces the ionization of the particle. In mathematical terms, the resulting model consists of a time-dependent point interaction, rather than a time-independent one. In view of this, our Theorem \ref{teo:cion} rigorously settles such result in the two dimensional case.

\medskip
Both the global well-posedness and the asymptotic behavior of the survival probability of the bound states of the Hamiltonian at $t=0$ have been widely investigated in dimensions one and three. We mention \cite{CCL,CCLR,CLR,RL} for the one-dimensional problem and \cite{CDFM,SY} for the three-dimensional one. On the contrary, to the best of our knowledge, the two dimensional case has never been discussed before in the literature. The main reason for this might be related to some technical difficulties, arising in the integral formulation of the problem, that are deep-seated features of the dimension two. However, some recent works on the nonlinear analogue of \eqref{eq:formal_equation} (where $\alpha(\cdot)$ is replaced by $|\psi|^{p}$), shed some new light on the above mentioned technical questions \cite{ACCT,ACCT2,CCT,CF} thus allowing to deal with the two-dimensional case.

More in detail, in the former part of the paper we extend the tools developed for the nonlinear problem in \cite{CCT} to the time-dependent case in order to prove global well-posedness in a weak sense; whereas, in the latter part, extending to the two-dimensional case the techniques of \cite{CCL}, we investigate the behavior of the survival probability of a bound state of $H(0)$ (in the monochromatic case) proving the so-called \emph{complete ionization}, that is, the fact that such probability tends to zero as $t\to\infty$. Note that both parts of the paper are not straightforward adaptations of already developed techniques. Indeed, on one hand the fact that \eqref{eq:formal_equation} is not autonomous, in contrast to the nonlinear problem treated in \cite{CCT}, requires major modifications of the strategy for global well-posedness. On the other hand, dimension two yields a different singularity of the kernel of the integral equation associated to \eqref{eq:formal_equation}, that 
calls for some new ides in the classical discussion of the equation in Laplace transform compared to, for instance, \cite{CDFM,CCL}.


\subsection{The model}\label{sec:model}

Before stating the main results of the paper, it is necessary to give a precise meaning to \eqref{eq:formal_equation}, that is to rigorously define \eqref{eq:formal_point_interaction}. This is done extending the definition of a time-independent delta interaction present, e.g., in \cite{AGH-KH}.

\medskip
Let $\alpha:\R\to\C$ be a suitably smooth function and define the family of operators $(\ham)_{t\in\R}$ such that for every fixed $t\in\R$, $\ham:L^2(\R^2)\to L^2(\R^2)$ is the operator with domain
\begin{multline}
 \label{eq-domain}
 \dom(\ham) :=\\[.2cm]
 =\big\{\psi\in L^2(\R^2):\exists q\in\C\:\text{ s.t. }\: \psi-q\gl=:\phi_{\lambda}\in H^2(\R^2)\:\text{ and }\:\phi_{\lambda}(\zev) = (\al(t)+\th_\la)\,q\big\},
\end{multline}
and action
\begin{equation}
\label{eq-action}
 \ham\psi:=-\Delta\phi_\la-\lambda q\gl,\qquad\forall\psi\in\dom(\ham),
\end{equation}
where $\lambda$ is a positive parameter, $\th_\la:=\f{\log(\f{\sqrt{\la}}{2})+\gamma}{2\pi}$ with $\gamma$ the Euler-Mascheroni constant, and $\gl$ is the Green's function of $-\Delta+\lambda$ on $\R^2$, namely $\gl(\xv)=\f{1}{2\pi}K_0(\sqrt{\la}|\xv|)$ with $K_0$ the modified Bessel function of second kind of order $0$ (also known as \emph{Macdonald function}, \cite[Sect. 9.6]{AS}).
Note that in the following $\phi_\lambda$ and $q\gl$ are referred to as the \emph{regular part} and \emph{singular part}, respectively, of an element in $ \dom(\ham)$.
\smallskip

Some comments are now in order. First, it is well known (see again \cite{AGH-KH}) that, for any fixed $t\in\R$, $\ham$ is the unique two-dimensional self-adjoint realization of a perturbation of $\delta$-type of $-\Delta$ with strength $\alpha(t)$. On the other hand, one can easily check that the definition of $\ham$ is independent of $\lambda$. Indeed, if there exists a representation of a function $\psi\in\dom(\ham)$ for one value of $\lambda>0$, then one can find another equivalent representation for any other $\lambda>0$ (due to the fact that $\g_{\lambda_1}-\g_{\lambda_2}\in H^2(\R^2)$, for every $\lambda_1,\lambda_2>0$). 

An even clearer way to see the independence of $\lambda$ is that of rewriting \eqref{eq-domain} and \eqref{eq-action} as follows:
\begin{multline*}
 \dom(\ham) := \\[.2cm]
 =\big\{\psi\in L^2(\R^2):\exists q\in\C\:\text{ s.t. }\:\psi-q\g_0=:\phi\in H^1_{loc}(\R^2)\cap\dot{H}^{2}(\R)\:\text{ and }\:\phi(\zev) =  \al(t)q\big\},
\end{multline*}
\[
 \ham\psi:=-\Delta\phi,\qquad\forall\xv\neq\zev,\qquad\forall\psi\in\dom(\ham),
\]
where $\g_0:=-\f{\log|\xv|}{2\pi}$ is the Green function of $-\Delta$ in $\R^2$. This formulation is equivalent to the previous one but is not useful in the computations since here the \emph{regular part} is too rough due to the \emph{infrared singularity} of $\g_0$ at infinity. For this reason, \eqref{eq-domain}-\eqref{eq-action} are usually preferred.

Given the definition of $\ham$, it is possible to give a precise meaning to \eqref{eq:formal_equation} and to consider the associated Cauchy problem 
\begin{equation}
 \label{eq:LSE}
 \left\{
 \begin{array}{l}
  \displaystyle \imath\f{\partial\psi}{\partial t}=\ham\psi\\[.4cm]
  \displaystyle \psi(0,\cdot)=\psi_0,
 \end{array}
 \right.
\end{equation}


\subsection{Main results}
\label{subsec:main}

Even though $\ham$ is self-adjoint at any fixed time $t$, \eqref{eq:LSE} cannot be solved by means of Stone's Theorem as the operator $\ham$ depends itself on $t$. As a consequence, the first point to be discussed is the global well-posedness of \eqref{eq:LSE}. Unfortunately, due to technical reasons (see, e.g., Remark \ref{rem:regsob}), we are not able to prove strong global well-posedness for \eqref{eq:LSE}. Nevertheless, it is possible to prove weak global well-posedness, which is sufficient for the purposes of the paper.

In order to define the weak version of \eqref{eq:LSE}, it is necessary to introduce the quadratic form associated with $\ham$, with domain
\begin{equation}
 \label{eq:form_domain}
 \domf:=\left\{ \psi \in L^2(\R^2): \psi-q\gl=:\phi_\la \in H^1(\R^2),\,q\in \C \right\}
\end{equation}
and action
\[
 \form(\psi):= \|\nabla\phi_\la\|_{L^2(\R^2)}^2 + \la\big(\|\phi_\la\|_{L^2(\R^2)}^2-\|\psi\|_{L^2(\R^2)}^2\big)+(\al(t)+\th_\la)\,|q|^2.
\]
Note that, in contrast to $\dom(\ham)$, $\domf$ is time-independent as the so called \emph{boundary condition} $\phi_\la(0)=(\al(t)+\th_\la)\,q$ is now absent. Therefore, the weak formulation of \eqref{eq:LSE} is given by
\begin{equation}
 \label{eq:weak_LSE}
 \begin{cases}
  \displaystyle \imath \frac{d}{dt} \braket{\chi}{\psi(t,\cdot)}=\langle\chi, \psi(t,\cdot)\rangle_{\form},\quad\forall\chi \in \domf,\\[.3cm]
  \displaystyle \psi(0,\cdot) = \psi_0,
 \end{cases}
\end{equation}
where $\langle\cdot, \cdot\rangle_{\form}$ is the sesquilinear form associated with $\form$.

Now, before stating the first main result of the paper it is convenient to recall some other tools which are widely used throughout. First, we denote by $U(t):=e^{ i \Delta t}$ the two-dimensional free Schr\"odinger propagator, with kernel
\begin{equation}\label{eq:Schrodingerkernel}
 U(t,\xv) := \frac{e^{-\frac{|\xv|^{2}}{4 \imath t}}}{2 \imath t},\qquad t \in \R,\:\xv \in \R^2,
\end{equation}
(acting by the normalized convolution product $(f*g)(\xv):=\f{1}{2\pi}\int_{\R^2}f(\xv-\yv)g(\yv)\dyv$). Moreover, we denote by $\I$ the \emph{Volterra function} of order $-1$
\begin{equation}
 \label{eq:i_kernel}
 \I(t) : = \int_{0}^{+\infty}\frac{t^{\tau - 1}}{\Gamma(\tau)}\dtau,
\end{equation}
where $\Gamma$ represents, as usual, the Euler gamma function \cite{CCT,CF}. Observe that \eqref{eq:i_kernel} is a particular case of the Volterra $\mu$-functions
\begin{equation}
 \label{eq:gen_volt}
 \mu(t,\beta,\delta):=\int_0^{+\infty}\frac{t^{\delta+s}\,s^{\beta}}{\Gamma(\beta+1)\,\Gamma(\delta+s+1)}\ds
\end{equation}
 cfr. \cite[Section 18.3]{E}; precisely, $\I(t)=\mu(t,0,-1)$. 

 \medskip
Then, the global well-posedness can be stated as follows.

\begin{teo}[Global well-posedness]
 \label{teo:wp}
 Let $(\ham)_{t\in\R}$ be the family of operators given by \eqref{eq-domain} and \eqref{eq-action}, with $\alpha\in W_{loc}^{1,\infty}([0,+\infty))$, and let $ \psi_0 \in \dom $, with
 \begin{equation}
  \label{eq:extradom}
  \dom := \left\{\psi\in\domf:\phi_\lambda \in H^2(\R^2)\right\}.
 \end{equation}
 Then, there exists a unique function $\psi$ such that $\psi(t,\cdot)\in\domf$ and satisfies \eqref{eq:weak_LSE}, for every $ t >0 $. In addition, $\psi$ is given by
 \begin{equation}
  \label{eq:ansatz}
  \psi(t,\xv) := (U(t)\psi_0)(\xv)+\frac{\imath}{2\pi}\int_0^t U(t-\tau,\xv)\,q(\tau)\dtau,
 \end{equation}
 where $ q(\cdot)$ is the unique continuous solution of
 \begin{equation}
 \label{eq:charge_eq}
  q(t)+4\pi\int_0^t\I(t-\tau)\big(\al(\tau)+\th_\lambda-\tfrac{\imath}{8})\big)q(\tau)\dtau=4\pi\int_0^t\I(t-\tau)(U(\tau)\psi_0)(\zev)\dtau.
 \end{equation}
 Finally, the \emph{mass} $ M(t)=M\big(\psi(t,\cdot)\big):=\left\| \psi(t,\cdot)\right\|^2_{L^2(\R^2)} $ is preserved along the flow.
\end{teo}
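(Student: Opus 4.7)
The plan is to follow a Duhamel / fixed-point strategy. First, I would justify that the ansatz \eqref{eq:ansatz} is the natural one: writing formally $\psi=\phi_\la+q\gl$, the distributional action of $\imath\partial_t+\Delta$ on $q(t)\gl$ produces a $\delta$-type source together with the $\la q\gl$ contribution from $(-\Delta+\la)\gl=\delta$; applying the free Schr\"odinger propagator $U(t)$ via Duhamel yields exactly \eqref{eq:ansatz}. The coupling condition $\phi_\la(t,\zev)=(\al(t)+\th_\la)q(t)$ built into $\dom(\ham)$ must still be imposed; evaluating the regular part of the ansatz at the origin produces a singular convolution (since $U(t-\tau,\zev)\sim 1/(t-\tau)$), which, following the 2D renormalization machinery of \cite{CCT,CF}, can be inverted by convolving with the Volterra function $\I$, producing precisely the linear integral equation \eqref{eq:charge_eq} for $q$.

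Second, I would solve \eqref{eq:charge_eq}. Since $\al\in W^{1,\infty}_{loc}$ and $\I$ satisfies sharp integrability bounds near $t=0$, the equation is a linear Volterra equation of the second kind with a weakly singular kernel; local well-posedness in $C^0([0,T])$ on a short interval follows from a standard contraction argument, and linearity extends the solution globally to $[0,+\infty)$. The forcing term $4\pi\int_0^t\I(t-\tau)(U(\tau)\psi_0)(\zev)\dtau$ is well defined and continuous in $t$ thanks to $\psi_0\in\dom$, which through $\phi_\la\in H^2(\R^2)\hookrightarrow C_b^0(\R^2)$ ensures that $\tau\mapsto(U(\tau)\psi_0)(\zev)$ is a bounded continuous function.

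Third, with $q$ in hand, I would verify that the ansatz \eqref{eq:ansatz} takes values in $\domf$ for every $t>0$, with charge exactly $q(t)$: the charge equation is precisely the compatibility condition ensuring $\phi_\la(t,\cdot):=\psi(t,\cdot)-q(t)\gl\in H^1(\R^2)$. Then, for a test function $\chi\in\domf$ with decomposition $\chi=\chi_\la+r\gl$, one differentiates $\braket{\chi}{\psi(t,\cdot)}$ inside \eqref{eq:ansatz}, integrates by parts to move $-\Delta$ onto $\chi_\la$ and generate the gradient term of $\form$, and collects the boundary contribution at $\zev$ using \eqref{eq:charge_eq}; this reproduces $\langle\chi,\psi(t,\cdot)\rangle_\form$. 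Uniqueness is immediate by linearity: the difference of two solutions satisfies \eqref{eq:charge_eq} with zero forcing, whose only continuous solution is zero. Mass conservation follows from testing with $\chi=\psi(t,\cdot)$ and noting that $\langle\psi,\psi\rangle_\form$ is real (the form is Hermitian at each fixed $t$), whence $\f{d}{dt}\|\psi(t,\cdot)\|_{L^2}^2=2\,\im\langle\psi,\psi\rangle_\form=0$.

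The main obstacle I expect to be the derivation and analysis of \eqref{eq:charge_eq} in the 2D setting, together with the subsequent verification that the ansatz actually lands in $\domf$ with the correct singular coefficient. Unlike the 1D and 3D cases of \cite{CCL,CDFM,SY}, here the kernel of the integral formulation carries a logarithmic rather than a power-type singularity, which is responsible for the appearance of the Volterra function $\I$ and of the constant $-\imath/8$ inside \eqref{eq:charge_eq}, originating from the renormalization of the free kernel at $\xv=\zev$. Moreover, because $\al$ is only Lipschitz in time and the problem is genuinely non-autonomous, one cannot appeal to Stone's theorem or to time-independent perturbative techniques: the compatibility between the time regularity of $\al$ and the singular structure of the Volterra kernel must be tracked carefully, which is the point where the strategy of \cite{CCT} for the autonomous nonlinear case needs to be genuinely adapted.
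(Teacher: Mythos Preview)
Your overall architecture is the same as the paper's, but there are two genuine gaps.

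First, the claim that $\tau\mapsto (U(\tau)\psi_0)(\zev)$ is bounded and continuous is false. Since $\psi_0=\phi_{\la,0}+q_0\gl\in\dom$, only the contribution of the regular part $U(\tau)\phi_{\la,0}$ evaluated at $\zev$ is bounded (this is where $\phi_{\la,0}\in H^2(\R^2)$ is used). The contribution of the singular part, $(U(\tau)\gl)(\zev)$, behaves like $-\gamma-\log\tau$ as $\tau\to 0^+$; this is the term $B_{2,1}$ in the paper's Proposition on the continuous solution of the charge equation. Thus the forcing $f$ in \eqref{eq:charge_eq} is \emph{not} obtained by integrating $\I$ against a bounded function, and your contraction in $C^0$ does not go through as stated. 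The paper handles this by splitting the forcing into a smooth part, the bounded piece $B_1$, and the logarithmic piece, and using the special identity $\int_0^t\I(t-\tau)(-\gamma-\log\tau)\dtau=1$ for the latter.

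Second, and more seriously, mere continuity of $q$ is not enough to show that the ansatz lies in $\domf$. To prove $\psi(t,\cdot)-q(t)\gl\in H^1(\R^2)$ one passes to Fourier variables, integrates by parts in $\tau$, and is left with an integral involving $\dot q$ which has to be estimated in a weighted $L^2$ space; this step requires $q\in H^{1/2}(0,T)$ (and, in the paper's streamlined version, also $q\in W^{1,1}(0,T)$). This additional regularity is not automatic from the $C^0$ fixed point and is the content of two separate propositions in the paper, proved by a second contraction argument in $C\cap H^{1/2}$ (respectively $W^{1,1}$), together with a log-H\"older continuity lemma needed to glue local $H^{1/2}$ pieces because of the failure of Hardy's inequality. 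Your step~3, in which you ``verify that the ansatz takes values in $\domf$ with charge exactly $q(t)$'', is precisely where this extra regularity is consumed, and it cannot be completed with $q$ merely continuous.
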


Some comments are in order. First, in the sequel we will refer to the function $q(t)$ as to the \emph{charge} and, consequently, to equation \eqref{eq:charge_eq} as to the \emph{charge equation}.

On the other hand, let us mention that Theorem \ref{teo:wp} can be established even under slightly weaker, but more technical, assumptions on the initial datum $\psi_0$ (see Remark \ref{rem:dato}). However, at the moment the natural hypothesis on the initial datum, i.e. $\psi_0\in\domf$, does not allow to prove local and global well posedness. In other words, techniques developed thus far do not enable one to prove that the form domain $\domf$ is preserved along the flow (see again Remark \ref{rem:dato}). Hence, we decided to use assumption \eqref{eq:extradom} for the sake of simplicity. Observe that since we are not able to prove the existence of a time-independent domain, which is dense in $L^2(\R^2)$ and preserved by the flow, we cannot deduce the existence of a unitary two-parameters propagator generated by $\ham$.

Finally, the proof of strong local and global well posedness in the operator domain $\dom(\ham)$ is out of reach as well at the moment. The reason is manily due to the highly singular behavior of the integral kernel $\I$ of \eqref{eq:charge_eq}, which not only misses any Sobolev regularizing effect, but also causes a net loss of Sobolev regularity when the Sobolev index is large (see Remark \ref{rem:dominio}). 
\medskip

Once global well-posedness of \eqref{eq:weak_LSE} is established, one can focus on the asymptotic analysis of the survival probability of the bound state associated to the sole eigenvalue of $\hamz$:
\begin{equation}\label{eq:eigenv}
 \lambda_0:=-4 e^{-4\pi\alpha(0)-2\gamma}
\end{equation}
(recall that here, in contrast to what occurs in dimensions one and three, $\hamz$ has an eigenvalue independently of the sign of $\alpha(0)$). Throughout we only consider the real $L^2$-normalized bound state associated to $\la_0$, namely
\begin{equation}
 \label{eq:eigen}
  \varphi_0:=\const_{0}\mathcal{G}_{-\lambda_0},\qquad\text{with}\quad\const_{0}:=2\sqrt{-\pi\lambda_0}.
 \end{equation}
Note that clearly $\varphi_0\in\dom$ so that it is an initial datum for which our arguments work .

Now, before stating the main theorem it is necessary to introduce the following assumption on the strength of the singular perturbation.

\smallskip
\noindent\textbf{Monochromatic assumption.} We say that the family of operators $(\ham)_{t\in\R}$ given by \eqref{eq-domain} and \eqref{eq-action} is a \emph{monochromatic perturbation} of the Laplacian if
\begin{equation}
\label{eq:monochrom}
\alpha(t)=\alpha_0\sin(\omega t+\eta)+c,\qquad\forall t\in\R,
\end{equation}
with $\alpha_0\in\R\setminus\{0\}$, $\omega>0$ and $\eta,c\in\R$.
\smallskip

\begin{rem}\label{rem:delta}
We mention that, even when $\alpha_0\sin\eta+c=0$, the operator $\mathcal{H}_{\alpha(0)}$ is \emph{not} the free Laplacian and thus $\ham$ is a non-trivial perturbation of the Laplacian even for $t=0$ (for details see \cite[Ch. I.5]{AGH-KH}). In view of this, the formal notation in \eqref{eq:formal_point_interaction} might be a little bit misleading, but we use it nevertheless throughout the paper as it does not give rise to misunderstandings.
\end{rem}

\begin{teo}[Complete ionization in the non-resonant case]
 \label{teo:cion}
 Let $(\ham)_{t\in\R}$ be a family of operators fulfilling the monochromatic assumption and let $\varphi_0 $ be the $L^2$-normalized bound state of $\hamz$ (see \eqref{eq:eigen}). Assume also that 
 \begin{equation}
  \label{eq:nonres}
 \omega e^{-2(\log2-\gamma)+4\pi\alpha(0)}\notin\mathbb N\,.
 \end{equation}
 If $\psi$ is the solution of \eqref{eq:weak_LSE} with $\psi_0=\varphi_0$ (provided by Theorem \ref{teo:wp}  and $\Theta(t)$ is the survival amplitude of $\varphi_0$, i.e.
 \begin{equation}
  \label{eq:survival}
  \Theta(t):=\braket{\varphi_{0}}{\psi(t,\cdot)},
 \end{equation}
 then
 \begin{equation}
  \label{survivalfinal}
 \Theta(t)=\braket{\varphi_0}{U(t)\varphi_0}+L(t)+R(t),\qquad\forall t>0,
\end{equation}
where $L(\cdot),\,R(\cdot)$ satisfy
\begin{equation}
\label{eq:decay}
\displaystyle
 L(t)\sim \frac{C_1+C_2\log t}{t}\,\qquad \text{and} \qquad |R(t)|\leq Be^{-b t}\,,\qquad\text{as}\quad t\to+\infty,
\end{equation}
with $C_1,\,C_2\in\C$ and $B,b>0$ fixed.
\end{teo}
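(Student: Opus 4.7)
The strategy is a Laplace-transform analysis of the charge equation \eqref{eq:charge_eq}, in the spirit of \cite{CCL,CDFM}, adapted to the specific logarithmic singularity of the Volterra kernel $\I$ in dimension two. First, I would insert the ansatz \eqref{eq:ansatz} with $\psi_0=\varphi_\al$ into \eqref{eq:survival} to obtain
\[
 \Theta(t)=\braket{\varphi_\al}{U(t)\varphi_\al}+\frac{\imath}{2\pi}\int_0^t\braket{\varphi_\al}{U(t-\tau,\cdot)}\,q(\tau)\dtau.
\]
Since $\varphi_\al=\const_\al\,\ga$ is known explicitly, with $\widetilde{\varphi_\al}(\kv)\propto(|\kv|^2-\la_\al)^{-1}$, the ``free'' term $\braket{\varphi_\al}{U(t)\varphi_\al}$ is a concrete Fourier integral decaying as $O(t^{-1})$ by standard dispersive estimates, and the same holds for the kernel $t\mapsto\braket{\varphi_\al}{U(t,\cdot)}$. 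The asymptotic analysis of $\Theta$ thus reduces to showing that $q(t)$ decays as $O(t^{-1})$ plus an exponentially small remainder, and that the convolution preserves this rate.

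Next, I would Laplace-transform \eqref{eq:charge_eq}, using the key identity $\int_0^{\infty}e^{-pt}\I(t)\dt=1/\log p$ (valid first for $\re p>1$, then by analytic continuation). Writing $\alpha(t)=\frac{\alpha_0}{2\imath}(e^{\imath\omega t}-e^{-\imath\omega t})$, multiplication by $\alpha$ in time becomes the pair of imaginary shifts $\hat q(p\mp\imath\omega)$ in the Laplace domain, producing a second-order complex-shift equation of the schematic form
\[
 \frac{\log(p/p_*)}{\log p}\,\hat q(p)-\frac{2\pi\alpha_0}{\imath\log p}\bigl[\hat q(p+\imath\omega)-\hat q(p-\imath\omega)\bigr]=\hat F(p),
\]
where a short computation from $\th_1=(\log(1/2)+\gamma)/(2\pi)$ identifies the root of the numerator as $p_*=4\imath e^{-2\gamma}$, so that $|p_*|=e^{2(\log 2-\gamma)}$ coincides with the Laplace-conjugate frequency of the bound state of $\hamz$ at $\al(0)=0$; the right-hand side $\hat F(p)$ is explicit and meromorphic, built from $(U(\cdot)\varphi_\al)(\zev)$.

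The third step is to iterate this shift equation to produce a uniformly convergent series for $\hat q$ on a vertical strip straddling the imaginary axis, and then to invert via the Bromwich contour. The non-resonance hypothesis \eqref{eq:nonres} is exactly what prevents the imaginary translates $p\pm\imath N\omega$ of $p_*$ from hitting the origin at any step of the iteration, keeping all denominators $\log((p\pm\imath N\omega)/p_*)$ bounded away from zero on the inversion contour; consequently the iterated series converges and yields a meromorphic extension of $\hat q$ into a half-plane $\{\re p>-\eta\}$ for some $\eta>0$. Its singular set consists of a discrete family of logarithmic branch points at $p=\imath N\omega$, $N\in\Z$, inherited from the factors $1/\log(p-\imath N\omega)$, together with finitely many poles in $\{\re p<0\}$. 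Deforming the Bromwich contour onto $\{\re p=-\eta\}$ one picks up, along each branch cut issuing from $\imath N\omega$, a Watson-lemma contribution of size $O(t^{-1})$ whose coefficients are $N$-summable by virtue of the iteration bounds (producing $L(t)$), and at each pole an exponentially decaying residue (collectively giving $R(t)$). Substituting the asymptotics of $q$ back into the formula for $\Theta(t)$ yields \eqref{survivalfinal}--\eqref{eq:decay}.

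The principal obstacle is precisely this last branch-cut analysis. In the one- and three-dimensional analogues \cite{CCL,CDFM} the Volterra kernel has Laplace transform of $\sqrt{p}$-type, so the Bromwich contributions are algebraic branch integrals handled by classical methods. In dimension two the $1/\log p$ singularity is softer but gives rise to an \emph{infinite} family of shifted branch points at $\imath N\omega$, each contributing to the leading $t^{-1}$ rate; obtaining a summable bound in $N$ demands a delicate uniform control on the iterated series, crucially made possible by the non-resonance condition \eqref{eq:nonres}. This is where the new ideas beyond \cite{CCL,CDFM} are needed.
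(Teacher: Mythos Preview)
Your overall architecture is correct and matches the paper's: decompose $\Theta=Z_1+Z$, bound $Z_1$ by the free dispersive estimate, Laplace-transform the charge equation into a second-order shift equation, analytically continue $\hat q$ up to (and slightly past) the imaginary axis, and invert by deforming the Bromwich contour onto branch cuts at $\imath\omega\Z$ (producing $L(t)=O(t^{-1})$) and residues at poles in the open left half-plane (producing $R(t)$). You also correctly identify $p_*=\imath e^{2(\log2-\gamma)}$ as the critical point.

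The genuine gap is the continuation step. You propose to ``iterate the shift equation to produce a uniformly convergent series'', but the equation is a \emph{bilateral} three-term recurrence
\[
\hat q(p)=-\alpha_0 h(p)\bigl[\hat q(p+\imath\omega)-\hat q(p-\imath\omega)\bigr]+\hat F(p),
\]
and naive iteration generates a tree of terms growing in both imaginary directions with no contraction: $h(p+\imath\omega n)\sim 1/\log n$ decays far too slowly for any geometric series to close. The paper's replacement for iteration is to restrict $p$ to the strip $\{0\le\im p<\omega\}$, set $\hat q_n(p):=\hat q(p+\imath\omega n)$, and view $(\hat q_n(p))_n$ as an unknown in $\ell_2(\Z)$. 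The shift equation becomes $(\ID-\mathcal L(p))\hat q=\hat g$ with $\mathcal L(p)$ a \emph{compact} operator (compactness is exactly the statement that $h(p+\imath\omega n)\to0$). One then runs the analytic Fredholm alternative: on the imaginary axis a direct computation shows the homogeneous system has only the trivial $\ell_2$ solution, so $(\ID-\mathcal L(p))^{-1}$ exists and is analytic there, except at $p=0$ and at the point $\bar p$ where one coefficient blows up---both of which require separate arguments. This Fredholm/compactness machinery, not iteration, is what makes the continuation rigorous.

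Two further points where your sketch diverges from what actually happens. First, the non-resonance condition does \emph{not} keep all denominators bounded away from zero on the contour: the factor $\log(p+\imath\omega\bar n)+4\pi\beta_0$ does vanish at some $\bar p\in\imath(0,\omega)$ regardless of \eqref{eq:nonres}, and this apparent singularity must be removed by a dedicated argument. What \eqref{eq:nonres} actually guarantees is $\bar p\neq0$, so that the logarithmic branch point at $p=0$ and the zero of the coefficient do not collide; this separation is what allows the representation $\hat q_0(p)=(H(p)-4\pi\imath\hat Z_2(p))/(Q(p)+\log p)$ with $H,Q$ analytic near $0$. Second, the poles in $\{\re p<0\}$ are not finitely many: there is \emph{at most one} pole $p_0$ per horizontal strip of width $\omega$, hence an infinite lattice $p_0+\imath\omega\Z$. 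Establishing ``at most one per strip'' is itself substantial---for small $|\alpha_0|$ it uses a continued-fraction expansion of the homogeneous solution together with the implicit function theorem near $p_*$, and the extension to arbitrary $\alpha_0$ relies on a winding-number argument exploiting an $\imath\omega$-periodic conjugate of $\mathcal L$. The $\ell^1$-summability of the residue series and of the branch-cut integrals (your ``$N$-summable'' claim) is then obtained from a weighted bound $\sup_n|n\,\hat q_n|<\infty$, proved by yet another contraction on a truncated system.
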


\begin{rem}
 Note that the constants $C_1,\,C_2$ cannot be proved to be different from zero in general. Indeed, as explained by Remark \ref{rem-cost-zero}, there can be ``accidental vanishing'' of those constants. Note also that, by standard linear dispersive estimates
 \[
\vert \braket{\varphi_0}{U(t)\varphi_0}\vert\lesssim \frac{\Vert \varphi_0\Vert^2}{4\pi t},\qquad\text{for large}\: t,
\]
so that the leading order of $\Theta$ is $\f{\log t}{t}$.
\end{rem}

As anticipated at the beginning of the paper, Theorem \ref{teo:cion} states the complete ionization, that is the vanishing of the survival probability $|\Theta(t)|^2$ of the bound state as $t\to+\infty$. This is consistent with what occurs in dimensions one and three. However, in this case it is more surprising since, at any fixed $t$, $\ham$ possesses an eigenvalue independently of the sign of $\alpha(t)$. As a consequence, one could expect, in principle, that the oscillating behavior of $\al(t)$ does not affect the survival probability. However, this is not the case, even though the decay rate \eqref{eq:decay} is slower than in the odd dimensional cases, for which it is estimated by $t^{-3}$. On the other hand, the decay rate is surprising also for the presence of the logarithmic correction, which is missing in odd dimensions and which seems to be a singular feature of the even dimension.

\begin{rem}
 Recall that in 1D the decay rate is actually exact, and the coefficient of the leading order term can be computed for generic smooth initial conditions.
\end{rem}

In addition, we mention that Theorem \ref{teo:cion} does not complete the discussion of the ionization problem as our result holds only for monochromatic perturbations and when the \emph{no-resonance} assumption \eqref{eq:nonres} is satisfied, for technical reasons.

The former is connected to the analysis of the Laplace transform of the charge on the open left half-plane of $\C$ achieved in Section \ref{subsec:lefthalfplane} (see also Remark \ref{rem-monocrom}). This is one of the main technical points of the paper and at the moment it is not clear how to rigorously address it without such requirement, even in odd dimensions (see, e.g., \cite{CCL}). The latter is again connected to the analysis of the Laplace transform of the charge, not only on the open left half-plane (see Remark \ref{rem-whynonres_bis}), the but also on the imaginary axis (see in Section \ref{subsec:inversion}). More precisely, the central point is the analysis at the origin (see Section \ref{subsubsec:pzero}). This problem has been addressed in odd dimensional cases by means of a regularizing technique which is not clear how to adapt the the two-dimensional case (see Remark \ref{rem-whynonres}). Fixing both these technical issue will be addressed in a forthcoming paper.


\subsection{Organization of the paper}

The paper is organized as follows:

\begin{itemize}
 \item[(i)] Section \ref{sec:global} addresses global well-posedness of \eqref{eq:weak_LSE}; more precisely:
 \begin{itemize}
  \item in Section \ref{subsec:charge} we establish existence and uniqueness of the charge in $C([0,T])\cap H^{1/2}(0,T)$ (i.e., (Propositions \ref{prop:charge_cont} and \ref{pro:sobregq}),
  \item absolute continuity of the charge (i.e. Proposition \ref{pro:reg_abs}) is contained in Section \ref{subsec:abscont},
  \item in Section \ref{subsec:wp} we prove Theorem \ref{teo:wp}, relying of the tools developed in Sections \ref{subsec:charge} and \ref{subsec:abscont};
 \end{itemize}
 \item[(ii)] Section \ref{sec:ion} discusses the features of the (unilateral) Laplace transform of the charge; more precisely:
 \begin{itemize}
  \item in Section \ref{subsec:integr} we prove Laplace transformability of the charge;
  \item in Section \ref{subsec:Laplace} we study the behavior of the Laplace transform of the charge on the open right half-plane;
  \item in Section \ref{subsec:inversion} we deal with the behavior of the Laplace transform of the charge on the imaginary axis;
  \item in Section \ref{subsec:lefthalfplane} we conclude the analysis of the Laplace transform of the charge on the open left half-plane;
 \end{itemize}
 \item[(iii)] Section \ref{sec:ionization} contains the proof of Theorem \ref{teo:cion}, relying of the tools developed in Section \ref{sec:ion}.
\end{itemize}

\smallskip

\subsection*{Acknowledgements.}

The authors wishes to thank the anonymous referees for careful reading the manuscript and for their constructive comments that helped improving our results.

\subsection*{Data availability statement}
 Data sharing not applicable to this article as no datasets were generated or analysed during the current study.

\section{Global well-posedness: Proof of Theorem \ref{teo:wp}}
\label{sec:global}

The proof of Theorem \ref{teo:wp} retraces the strategy introduced in \cite{CCT} to study the nonlinear counterpart of the problem, where $\alpha(\cdot)$ is replaced by $|q|^{2\sigma}$. Roughly speaking, we establish an existence and uniqueness result for \eqref{eq:charge_eq} and then prove that the resulting solution $q(\cdot)$ is sufficiently regular so that the function defined by \eqref{eq:ansatz} is the global solution of \eqref{eq:weak_LSE}. Here we focus mainly on those points which require major modifications and new ideas, referring to \cite{CCT} for the already developed tools. Nevertheless, for the sake of completeness, we briefly explain also the parts of the proofs similar to \cite{CCT}.

\smallskip
The actual proof of Theorem \ref{teo:wp} is showed in Section \ref{subsec:wp}, while Sections \ref{subsec:charge} and \ref{subsec:abscont} are devoted to some preliminary results. More precisely, Section \ref{subsec:charge} addresses the $C\cap H^{1/2}$-regularity, whereas Section \ref{subsec:abscont}  deals with $W^{1,1}$-regularity. Note that the proof of the Sobolev regularity of the charge $q$ is the fundamental ingredient of the proof of Theorem \ref{teo:wp}. Indeed, the main point of such proof is the regularity of $\psi(t,\cdot)$, which is in fact reconstructed by a regularity transfer from of the charge $q$. In particular, the key tool is the proof of the $H^{1/2}$-regularity, while the $W^{1,1}$-regularity is in some sense an additional property, which however strongly simplifies the proof, as explained in Remark \ref{rem:regsob2}.

\smallskip
Finally, we recall that throughout the section we will always consider the assumptions of Theorem \ref{teo:wp}, i.e.
\[
 \alpha\in W_{loc}^{1,\infty}\big([0,+\infty)\big)\qquad\text{and}\qquad\psi_0\in\dom.
\]
They are required for the proof of the Theorem, but might be relaxed on every single preliminary step. We will discuss these possible relaxations at the end of all the subsection below (Remarks \ref{rem:dato}, \ref{rem:alfauno}, \ref{rem:regsob}, \ref{rem:alfadue}, \ref{rem:regsob2} and \ref{rem:alfatre}).


\subsection{Existence and uniqueness of the charge} 
\label{subsec:charge}

Here we prove that \eqref{eq:charge_eq} admits a unique solution in $C([0,T])\cap H^{1/2}(0,T)$, for every $T>0$. We establish this result in the next two propositions.

Preliminarily, we introduce the operator $g\mapsto Ig$ such that
\[
 (Ig)(t):=\int_0^t\I(t-\tau)g(\tau)\dtau,
\]
which is nothing but the finite time convolution with the Volterra function of order $-1$. We also mention that we use throughout the symbol $\,\widetilde{\cdot}\,$ to denote the unitary Fourier transform of $\R^2$.

\begin{pro}[Continuous solution of the charge equation]
\label{prop:charge_cont}
 Equation \eqref{eq:charge_eq} admits a unique solution in $C([0,\infty))$.
\end{pro}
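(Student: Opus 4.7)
The plan is to view \eqref{eq:charge_eq} as a linear Volterra integral equation of the second kind, recast it as a fixed-point problem on $C([0,T])$, apply Banach's contraction principle on a short interval, and then iterate to extend globally. Throughout, the heart of the argument is to exploit the local integrability at the origin of the Volterra kernel $\I$.

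Setting $F_0(\tau):=(U(\tau)\psi_0)(\zev)$ and
\[
  \mathcal{T}q(t):=4\pi(I F_0)(t)-4\pi\int_0^t \I(t-\tau)\big(\al(\tau)+\th_1-\tfrac{\imath}{8}\big)q(\tau)\dtau,
\]
a continuous solution of \eqref{eq:charge_eq} is exactly a fixed point of $\mathcal{T}$. First I would check that $\mathcal{T}$ maps $C([0,T])$ into itself, which reduces to (i) $F_0\in C([0,+\infty))$ and (ii) the convolution operator $I$ preserves continuity on $[0,T]$. For (i), the hypothesis $\psi_0\in\dom$ yields $\psi_0=q_0\gl+\phi_\lambda$ with $\phi_\lambda\in H^2(\R^2)$, so $\widetilde{\psi}_0$ is integrable with enough decay that Fourier inversion gives $F_0(\tau)=\tfrac{1}{2\pi}\int_{\R^2}e^{-\imath\tau|\kv|^2}\widetilde\psi_0(\kv)\dkv$, and continuity of $F_0$ follows from dominated convergence. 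For (ii), one exploits that the Laplace transform of $\I$ is $1/\log p$, from which $\I$ is integrable on any $[0,T]$ (with the borderline near-origin behaviour $\I(t)\sim 1/(t\log^2(1/t))$), combined with the standard continuity argument for convolutions with $L^1$ kernels.

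The contraction property is then straightforward. Since $\alpha\in W^{1,\infty}_{loc}$, one has $M_T:=\|\al+\th_1-\tfrac{\imath}{8}\|_{L^\infty(0,T)}<\infty$, hence
\[
 \|\mathcal{T}q_1-\mathcal{T}q_2\|_{C([0,T])}\leq 4\pi M_T\|\I\|_{L^1(0,T)}\|q_1-q_2\|_{C([0,T])}.
\]
Because $\|\I\|_{L^1(0,T)}\to 0$ as $T\to 0^+$, a small enough $T_0>0$ makes $\mathcal{T}$ a strict contraction on $C([0,T_0])$, producing a unique continuous solution on $[0,T_0]$. To extend globally, I would iterate: once $q$ is known on $[0,T_0]$, the equation on $[T_0,2T_0]$ is again a linear Volterra equation, the part involving $q|_{[0,T_0]}$ being absorbed into a new continuous source term. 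The contraction constant depends only on the length of the interval, so the same fixed-point argument applies on each block $[nT_0,(n+1)T_0]$ and yields a unique $q\in C([0,+\infty))$.

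The main obstacle I expect is the delicate analysis of $\I$ near $0$: unlike the odd-dimensional cases, where the analogous kernels are explicit power-laws admitting clean pointwise bounds, here the singularity is borderline-integrable and governed by the logarithmic Laplace transform $1/\log p$. The three facts required to close the argument, namely $\I\in L^1_{loc}$, continuity-preservation under convolution with $\I$, and a sharp enough $L^1$ bound to close the contraction, must be established carefully, and this is exactly where the two-dimensional machinery developed in \cite{CCT} for the nonlinear problem is invoked and adapted to the present non-autonomous linear setting.
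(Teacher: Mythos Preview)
There is a genuine gap in your treatment of the forcing term. You claim that $\psi_0\in\dom$ implies $\widetilde\psi_0\in L^1(\R^2)$, so that $F_0(\tau)=(U(\tau)\psi_0)(\zev)$ is continuous by dominated convergence. This is false: the decomposition $\psi_0=q_0\gl+\phi_\lambda$ has $\widetilde\gl(\kv)=\tfrac{1}{2\pi}(|\kv|^2+\lambda)^{-1}$, which is \emph{not} integrable in two dimensions. Only the regular part $\phi_\lambda\in H^2(\R^2)$ has $\widetilde\phi_\lambda\in L^1$. As a consequence $F_0$ is not continuous at $\tau=0$: the singular contribution $4\pi q_0(U(\tau)\gl)(\zev)$ carries a term of the form $q_0\,e^{\imath\lambda\tau}(-\gamma-\log\tau)$ (see the $B_2$ piece in the paper's proof, via \cite[Eq.~(2.42)]{CCT}), which blows up logarithmically. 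Your ``convolution with an $L^1$ kernel preserves continuity'' argument therefore does not apply to $IF_0$, since $F_0\notin L^\infty(0,T)$.

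The paper closes this gap by splitting $F_0$ into the bounded piece $B_1$ (from $\phi_\lambda$) and the logarithmically singular piece $B_2$ (from $q_0\gl$). For the latter it exploits the key convolution identity $\int_0^t\I(t-\tau)(-\gamma-\log\tau)\dtau=1$ from \cite[Eq.~(2.29)]{CCT}, which exactly cancels the singularity and yields $IB_2\in C([0,T])$. This identity --- the fact that $\I$ is, in a precise sense, the convolution inverse of the log kernel --- is the two-dimensional mechanism you are missing, and it cannot be replaced by generic $L^1$ convolution estimates. Once $f\in C([0,T])$ is correctly established, your short-time contraction plus iteration would work (with the minor caveat that the contraction constant also depends on $\sup|\zeta|$ over the block, not just its length); the paper instead obtains global existence in one stroke via a Neumann-series bound $|(A^n g)(t)|\leq\|g\|_\infty(\zeta_T)^n\mu(t,n-1,0)$ and summability of the Volterra $\mu$-functions.
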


\begin{proof}
We start by rewriting \eqref{eq:charge_eq} in the compact form
 \begin{equation}
  \label{eq:charge_comp}
  (\ID-A)q=f,
 \end{equation}
 where
 \begin{equation}
  \label{eq:A_comp}
  Aq:=-I(\zeta q),
 \end{equation}
 with
 \begin{equation}
 \label{eq:zetagreca}
 \zeta(t):=4\pi\left(\alpha(t)+\theta_1-\tfrac{\imath}{8}\right),
 \end{equation}
 and
 \begin{equation}
  \label{eq:for_comp}
  f:=I(4\pi(U(\cdot)\psi_0)(\zev)).
 \end{equation}
 Then, in order to conclude, it is sufficient to exploit \cite[Theorem 5.1.2]{GV}; namely, one has to prove that for any fixed $T>0$:
 \begin{itemize}
  \item[(i)] $f\in C([0,T])$;
  \item[(ii)] $A$ is bounded in $C([0,T])$;
  \item[(iii)] $\displaystyle \sum_{i=0}^\infty A^ng$ converges in $C([0,T])$ for any $g\in C([0,T])$.
 \end{itemize}
 
 Let us begin with item (i). As $ \psi_0=\phi_{\la,0}+q_0\gl\in\dom\subset \domf$,
 \begin{equation}
  \label{eq:Aunoduetre}
  4\pi(U(\tau)\psi_0)(\zev)=4\pi \left(U(\tau)\phi_{\lambda,0} \right)(\zev) + 4\pi q_0 \left(U(\tau)\gl\right)(\zev)=:B_1(\tau)+B_2(\tau).
 \end{equation}
 On the one hand, since
 \[
  B_1(\tau) = 2 \int_{\R^2} e^{-\imath |\kv|^2\tau} \phitr_{\lambda,0}(\kv)\dkv
 \]
 and $\phitr_{\lambda,0}\in L^1(\R^2)$, one sees that $B_1\in L^\infty(0,T)$ and thus $IB_1\in C([0,T])$ by \cite[Lemma 2.3]{CCT}. On the other hand, from \cite[Eq. (2.42)]{CCT},

 \begin{equation}
  \label{eq:Bdue}
  B_2(\tau) =  q_0\left(e^{\imath\lambda\tau}(-\gamma-\log\tau)+\frac{e^{\imath\lambda\tau}}{\pi}\left(-\pi\log\lambda+Q(\lambda,\tau)\right)\right)=:q_0(B_{2,1}(\tau)+B_{2,2}(\tau)),
 \end{equation}
 where $Q(\lambda,\tau)$ is a smooth function of $\tau$ (defined by \cite[Eq. $(2.34)$]{CCT}). Then, again by \cite[Lemma 2.3]{CCT}, $IB_{2,2}\in C([0,T])$. Furthermore, using \cite[Eq. $(2.29)$]{CCT}, there results
 \begin{equation}
  \label{eq:est_Bdue}
  (IB_{2,1})(t)=1+\int_0^t\I(t-\tau) b_{2,1}(\tau)\dtau, \qquad\text{with}\qquad b_{2,1}(\tau) := (e^{\imath\lambda\tau}-1 )(-\gamma-\log\tau),
 \end{equation}
 and thus $IB_{2,1}$ is continuous as before by the boundedness of $b_{2,1}$. Summing up, $IB_2\in C([0,T])$, whence $f\in C([0,T])$.
 
 Concerning item (ii), it suffices to use \eqref{eq:gen_volt} and \cite[Eqs. $(8)$, $(11)$ and $(12)$, Lemma 2.1, Remark 2.1]{CF} to obtain
 \begin{equation}
  \label{ea:A_linear}
  |(Ag)(t)|\leq \zeta_T\,\mu(T,0,0)\,\|g\|_{C[0,T]}, \qquad\text{with}\quad \zeta_T:=\max_{[0,T]}\big|\zeta\big|,
 \end{equation}
 which proves the claim.
 
 It is then left to show (iii). To this aim we first prove by induction that, for every $n\geq 1$,
 \begin{equation}
  \label{eq:ind_rel}
  |(A^ng)(t)|\leq \|g\|_{C[0,T]}\,(\zeta_T)^n\,\mu(t,n-1,0).
 \end{equation}
 The case $n=1$ is given by \eqref{ea:A_linear}. Therefore, assume that \eqref{eq:ind_rel} is satisfied for $n=m$. Consequently,
 \begin{align*}
  |(A^{m+1}g)(t)| & \leq \zeta_T\int_0^t\mu(t-\tau,0,-1)|(A^mg(\tau))|\dtau\\
                  & \leq \|g\|_{C[0,T]}\,(\zeta_T)^{m+1}\int_0^t\mu(t-\tau,0,-1)\mu(\tau,m-1,0)\dtau
 \end{align*}
 and, combining \cite[Eq. $(11)$]{CF} with \cite[Eq. $(69)$]{A} and \cite[Eq. $(13)$]{E}, there results
 \begin{align*}
  \int_0^t\mu(t-\tau,0,-1)\mu(\tau,m-1,0)\dtau & = \frac{\mathrm{d}}{\mathrm{d}t}\int_0^t\mu(t-\tau,0,0)\mu(\tau,m-1,0)\dtau\\
                                                   & = \frac{\mathrm{d}}{\mathrm{d}t}\mu(t,m,1)=\mu(t,m,0),               
 \end{align*}
 which entails \eqref{eq:ind_rel} for $n=m+1$. Now, in view of \eqref{eq:ind_rel}, it is sufficient to show that $\sum_{n=1}^\infty(\zeta_T)^n\mu(T,n-1,0)<+\infty$, or equivalently that
 \begin{equation}
  \label{eq:int_conv}
  \int_0^{+\infty}(\zeta_T)^s\,\mu(T,s,0)\ds<+\infty.
 \end{equation}
 By the Fubini theorem and \eqref{eq:gen_volt} one finds that
 \begin{equation}
 \label{eq:fubini}
  \int_0^{+\infty}(\zeta_T)^s\,\mu(T,s,0)\ds=\int_0^{+\infty}\frac{T^s\,\mu(\zeta_Ts,0,0)}{\Gamma(s+1)}\ds.
 \end{equation}
 Note that the integrating function is continuous on $[0,\infty)$, so that it is sufficient to discuss the behavior for $s\to+\infty$.
 From \cite[Sect. 18.3, pag. 221]{E} and \cite[Eq. 8.327.1$^*$]{GR} we get
 \[
  \mu\big((\zeta_T)s,0,0\big)\sim e^{\zeta_Ts},\qquad\text{as}\quad s\to+\infty,
 \]
 and
 \[
  \Gamma(s+1)\sim\frac{(s+1)^{s+1/2}}{e^{s+1}},\qquad\text{as}\quad s\to+\infty.
 \]
 Consequently,
 \[
  \frac{T^s\,\mu(\zeta_Ts,0,0)}{\Gamma(s+1)}\sim \frac{(Te^{\zeta_T+1})^s}{(s+1)^{s+1/2}}=o(e^{-cs\log s}),\qquad\text{as}\quad s\to+\infty,
 \]
 which proves \eqref{eq:int_conv} via \eqref{eq:fubini}. 
\end{proof}

\begin{rem}
 The previous proof adapts a method introduced by \cite{SY} for the tri-dimensional case. However, in that case, since the integral kernel of the charge equation is the $1/2$-Abel kernel, global existence and uniqueness are straightforward consequences of \cite[Theorem 7.2.1]{GV}. Note also that the proof of the regularity of the forcing term retraces \cite[Proposition 2.2]{CCT}.
\end{rem}

\begin{pro}[$H^{1/2}$-regularity of the charge]
 \label{pro:sobregq}
The solution of \eqref{eq:charge_eq} provided by Proposition \ref{prop:charge_cont} belongs to $H^{1/2}(0,T)$, for every $T>0$.
\end{pro}

In the proof of Proposition \ref{pro:sobregq} we need to exploit the following result, whose proof is technical and completely analogous to \cite[Lemma 2.6]{CCT}. For this reason we omit it for the sake of brevity.

\begin{lem}
 \label{lemma:qlogcont}
 Let $\beta>0$ and define the space of $\beta$-\emph{log-H\"older} continuous functions as
 \begin{multline*}
 {C_{\log,\beta}([0,T]): = \Big\{ g \in C([0,T]): \exists C >0 \text{ such that } \forall t\in[0,T], \: \exists\delta>0 \text{ such that }} \\
 {\forall s \in (t - \delta, t + \delta) \cap [0,T], \: \left| g(t) - g(s) \right| \leq C \left|\log|t - s|\right|^{-\beta} \Big\}}.
\end{multline*}
 If $\alpha\in W_{loc}^{1,\infty}([0,+\infty))$ and $\psi_0\in\dom$, then the solution $q$ of \eqref{eq:charge_eq} provided by Proposition \ref{prop:charge_cont} satisfies
 \[
  q \in \clog([0,T]),\qquad\forall 0<\beta \leq 1,\quad\forall T>0.
 \]
\end{lem}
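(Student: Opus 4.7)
The plan is to exploit the fixed-point form of the charge equation, $q=f+I(\zeta q)$ with $\zeta$ and $f$ as in \eqref{eq:zetagreca}--\eqref{eq:for_comp}, that was already set up in the proof of Proposition \ref{prop:charge_cont}. Since $q\in C([0,T])$ is already in hand, and since the embedding $C_{\mathrm{log},\beta'}([0,T])\subset C_{\mathrm{log},\beta}([0,T])$ holds for $0<\beta\leq\beta'\leq 1$ (because $|\log r|^{-\beta'}\leq|\log r|^{-\beta}$ for $r$ close to the origin), it is enough to prove the statement for the critical value $\beta=1$. This reduces everything to a single regularization statement about the linear Volterra operator $I$.

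The core claim I would establish is: if $g\in L^\infty(0,T)$, then $Ig\in\clogo([0,T])$ with $C_{\mathrm{log},1}$-seminorm controlled by $\|g\|_{L^\infty}$. Its derivation starts from the asymptotics
\[
 \I(t)\sim\frac{1}{t(\log t)^2},\qquad \mu(t,0,0)=\int_0^t\I(\tau)\dtau\sim\frac{1}{|\log t|},\qquad t\to 0^+,
\]
which can be read off the integral representation \eqref{eq:i_kernel} by Laplace's method (or deduced from \cite[Sect.~18.3]{E}). For $0<s<t$ and $h:=t-s$ small, I would split
\[
 (Ig)(t)-(Ig)(s)=\int_s^t\I(t-\tau)g(\tau)\dtau+\int_0^s\big(\I(t-\tau)-\I(s-\tau)\big)g(\tau)\dtau.
\]
The first summand is dominated by $\|g\|_{L^\infty}\mu(h,0,0)=O(|\log h|^{-1})$. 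For the second, I change variable $u=s-\tau$ and split the resulting integral at $u=2h$: on $[0,2h]$ the two Volterra terms are estimated separately and each produces a contribution of order $\mu(3h,0,0)+\mu(2h,0,0)=O(|\log h|^{-1})$, while on $[2h,s]$ a mean-value bound based on $|\I'(v)|\lesssim 1/(v^2(\log v)^2)$ yields $h\int_{2h}^s dv/(v^2(\log v)^2)=O(|\log h|^{-1})$ as well.

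With the regularization lemma at hand, the result follows in the spirit of \cite[Lem.~2.6]{CCT}. Decomposing $f$ along \eqref{eq:Aunoduetre} gives $f=IB_1+q_0(IB_{2,1}+IB_{2,2})$, where $B_1\in L^\infty(0,T)$ because $\phitr_{\lambda,0}\in L^1(\R^2)$, $IB_{2,2}$ is smooth, and $IB_{2,1}=1+I(b_{2,1})$ with $b_{2,1}\in L^\infty(0,T)$ by \eqref{eq:Bdue}--\eqref{eq:est_Bdue}; hence $f\in\clogo([0,T])$. Since $\alpha\in W^{1,\infty}_{loc}$ gives $\zeta\in L^\infty_{loc}$ and $q\in C([0,T])$, one has $\zeta q\in L^\infty(0,T)$, and consequently $I(\zeta q)\in\clogo([0,T])$. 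Adding the two contributions yields $q=f+I(\zeta q)\in\clogo([0,T])$, and the full range $0<\beta\leq 1$ follows by the embedding above.

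The main technical hurdle, inherited from the two-dimensional geometry of the kernel, is the sharp control on $\I(u+h)-\I(u)$ required for the second summand in the splitting: one cannot fall back on an Abel-type kernel as in three dimensions (cf.\ \cite{SY,CDFM}), and both the leading behavior of $\I$ at $0$ and its derivative carry inverse powers of $\log$ that must be tracked with care to match the target $|\log h|^{-1}$ modulus. Once this estimate is secured, the non-autonomous feature of \eqref{eq:charge_eq} enters merely through the bounded multiplicative coefficient $\zeta(t)$, so the time-dependent setting causes no additional difficulty in the bootstrap.
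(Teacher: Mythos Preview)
Your proposal is correct and follows essentially the same route the paper indicates: the paper omits the proof, deferring entirely to \cite[Lemma~2.6]{CCT}, and your argument---bootstrapping from the smoothing property $g\in L^\infty\Rightarrow Ig\in C_{\log,1}$ (established via the near-origin asymptotics of $\I$ and $\mu(\cdot,0,0)$) applied to both $f$ and $I(\zeta q)$---is precisely that scheme. One cosmetic slip: the fixed-point form is $q=f-I(\zeta q)$ (since $A=-I(\zeta\,\cdot)$ in \eqref{eq:A_comp}), not $q=f+I(\zeta q)$, but this sign does not affect the regularity conclusion.
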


\begin{proof}[Proof of Proposition \ref{pro:sobregq}]
 The strategy consists of two steps: we first prove the $H^{1/2}$-regularity of the charge $q$ on small intervals and then extend it to arbitrary sub-intervals of $[0,+\infty)$.
 
 \emph{Step (i): there exists $T>0$ such that $q\in H^{1/2}(0,T)$.} The first point is the $H^{1/2}$-regularity of the function $f$ defined by \eqref{eq:for_comp}, for which we rely again on \eqref{eq:Aunoduetre}. Arguing as in \cite[Proof of Proposition 2.3]{CCT}, one can get that
 \begin{equation}
  \label{eq:est_Buno}
  \left\| B_1 \right\|_{H^{\nu}(\R)}^2 \leq C \int_{\R^2} \left(1+|\kv|^4\right)^{\nu}|\phitr_{\lambda,0}(\kv)|^{2}\dkv,
 \end{equation}
 and thus $B_1\in H^{1/2}(0,T)$, for every $T>0$, as $\phi_{\lambda,0}\in H^1(\R^2)$. Since $B_1$ is also bounded we have by \cite[Lemma 2.4]{CCT} that $IB_1\in H^{1/2}(0,T)$, for every $T>0$. On the other hand, since $B_{2,2}$ is smooth, $IB_{2,2}\in H^{1/2}(0,T)$, for every $T>0$ as well and, as $IB_{2,1}=1+I b_{2,1}$ with $b_{2,1}\in H^1(0,T)$, then $IB_{2,1}\in H^{1/2}(0,T)$. Summing up, we obtain that $f\in H^{1/2}(0,T)$.
 
 Now, it is left to show that
 \begin{equation}
  \label{eq:mapG}
  \mathcal{G}(q):=f+Aq,
 \end{equation}
 with $A$ defined by \eqref{eq:A_comp}, is a contraction in a suitable subset of $C[0,T]\cap H^{1/2}(0,T)$, for a sufficiently small $T>0$. Indeed, this entails that \eqref{eq:charge_eq} has a unique solution here and, since it must coincide with that provided by Proposition \ref{prop:charge_cont}, whence the claim is proved. First, define the contraction space as
 \[
 \A_{T}: = \left\{q\in C([0,T])\cap H^{1/2}(0,T):\left\|q\right\|_{L^\infty(0,T)}+\left\|q\right\|_{H^{1/2}(0,T)}\leq \rho_T \right\},
 \]
 with $\rho_T:=2\max\{\|f\|_{L^\infty(0,T)}+\|f\|_{H^{1/2}(0,T)},1\}$. The set $ \A_{T}$ is a complete metric space with the norm induced by $C[0,T]\cap H^{1/2}(0,T)$, i.e.,
 \[
  \left\| \cdot \right\|_{\A_{T}}:= \left\|\cdot \right\|_{L^\infty(0,T)}+ \left\|\cdot \right\|_{H^{1/2}(0,T)}.
 \]
 Let us begin with showing that $ \mathcal{G} $ maps $ \A_T $  into  itself. If $q\in \A_{T}$, then one easily sees that $\mathcal{G}(q)$ is continuous. On the other hand, from \cite[Lemma 2.4.]{CCT}, one obtains
 \[
  \left\|Aq\right\|_{H^{1/2}(0,T)} \leq C\left\|I(\zeta q)\right\|_{\A_{T}}\leq C_T \left\|\zeta q \right\|_{\A_{T}} \leq C_T \zeta_{\A_T} \left\| q \right\|_{\A_T} \leq C_T  \zeta_{\A_T} \rho_T
 \]
 where $\zeta_{\A_T}:=\|\zeta\|_{\mathcal{A}_T}$ and, throughout, $C_T$ is a generic positive constant such that $C_T \to 0$, as $ T \to 0 $ (and which may vary from line to line). In addition, by \cite[Lemma 2.3]{CCT}, one sees that
 \begin{equation}
  \label{eq:estGuno_inf}
  \left\|Aq\right\|_{L^\infty(0,T)} \leq C \left\|I(\zeta q)\right\|_{L^\infty(0,T)} \leq C_T  \left\|\zeta q \right\|_{L^\infty(0,T)} \leq C_T\zeta_{\A_T}\rho_T,
 \end{equation}
 so that
 \[
  \left\|Aq\right\|_{\A_{T}}\leq C_T\zeta_{\A_T}\rho_T.
 \]
 Thus, we have
\[
  \left\|\mathcal{G}(q) \right\|_{\A_{T}} \leq \rho_T \left(  \tfrac{1}{2}+ C_T \zeta_{\A_T}\right)
 \]
 and, as $\zeta_{\A_T}$ is bounded, the term in brackets is equal to $\frac{1}{2}+o(1)$ as $T\to0$, so that $\mathcal{G}(q)\in \A_{T}$, for $T$ sufficiently small. Therefore, as a final step, we show that $\mathcal{G}$ is actually a norm contraction. For any pair of functions $q_1,\,q_2\in \A_{T}$, we have
 \begin{equation}
  \label{eq:relG}
  \mathcal{G}(q_1)-\mathcal{G}(q_2)=A(q_2-q_1)
 \end{equation}
 and hence, arguing as before,
 \[
  \left\|\mathcal{G}(q_1)-\mathcal{G}(q_2)\right\|_{\A_{T}}\leq C_T\zeta_{\A_T}\left\| q_1-q_2 \right\|_{\A_T},
\]
whence $\mathcal{G}$ is a contraction on $\A_{T}$, since again $C_T\to0$, as $T\to0$, and $\zeta_{\A_T}$ is bounded.
 
 \emph{Step (ii): $q\in H^{1/2}(0,T)$ for every $T>0$.} By Step (i), there exists $T_0>0$ such that the solution $q$ of \eqref{eq:charge_eq} belongs to $H^{1/2}(0,T_0)$. Now, consider the equation
 \begin{equation}
  \label{eq:charge_new}
  q_1(t) + \int_0^t \dtau \:\I(t-\tau)\zeta(T_0+\tau) q_1(\tau) = f_1(t),
 \end{equation}
 where
 \begin{equation}
  \label{eq:new_f}
  f_1(t):=f(t+T_0)-\int_0^{T_0}\dtau\:\I(t+T_0-\tau)\zeta(\tau)q(\tau).
 \end{equation}
 Combining the regularity properties of $f$ established in the proof of Proposition \ref{prop:charge_cont} and in Step (i) we have that $f(\cdot+T_0)\in C[0,T]\cap H^{1/2}(0,T)$, for every $T>0$. On the other hand, exploiting \cite[Lemma 2.7]{CCT} with $T=T_0$ and $h=\zeta q$, there results that also the second term of \eqref{eq:new_f} belongs to $C[0,T]\cap H^{1/2}(0,T)$,  for every $T>0$, and thus $f_1\in C[0,T]\cap H^{1/2}(0,T)$, for every $T>0$. Consequently, arguing as before, there exist $T_1>0$ and $q_1\in C[0,T_1]\cap H^{1/2}(0,T_1)$ which solves \eqref{eq:charge_new}. In addition, since $q(t)=q_1(t-T_0)$ for every $t\in[T_0,T_0+T_1]$, one sees that $q\in H^{1/2}(0,T_0)$ and $q\in H^{1/2}(T_0,T_0+T_1)$. In principle, this is not sufficient to claim that $q\in H^{1/2}(0,T_0+T_1)$ due to the non-locality of the $H^{1/2}$-norm and the failure of the Hardy inequality (as explained in \cite[Section 2.1]{CCT}). However, thanks to Lemma \ref{lemma:qlogcont}, we know a priori that $q\in C_{\log,1}[0,T]$ for all $T>0$, so that one can argue as in \cite[Proofs of Propositions 2.1$\&$2.4]{CCT} and obtain nevertheless that $q\in H^{1/2}(0,T_0+T_1)$.
 
 Summing up, we showed that, if one proves $H^{1/2}$-regularity of $q$ up to a time $ T_0>0$, then it can be extended up to $ T_0 + T_1 $ for some $T_1>0$. This argument allows in fact to extend the $H^{1/2}$-regularity of the charge up to any time $T>0$. To this aim fix an arbitrary $T>0$. If there exists $j^*$ such that $\sum_{j=0}^{j^*}T_j\geq T$, where each $T_j$ is the resulting time of the $j$-th iteration of the previous procedure, then the claim is proved.
 
 By Step (i), each $T_j$ has to be chosen so that
 \[
  C_{T_j}\zeta_{\A_{\sum_{\ell=0}^{j-1}T_\ell}}<\frac{1}{2}.
 \]
 However, one can chose $C_{T_j}$ in such a way that $C_{T_j}>c$ for some constant $c>0$ (as it actually suffices that $C_{T_j}<1/2\zeta_{\A_T}$). As a consequence, one can chose also $T_j$ in such a way that $T_j>c'$ for some constant $c'>0$, whence $\sum_{j=0}^\infty T_j=+\infty$.
\end{proof}

 \begin{rem}
  \label{rem:dato}
  One can see that Propositions \ref{prop:charge_cont} and \ref{pro:sobregq} do not exploit the whole regularity of $\phi_{\la,0}$ provided by \eqref{eq:extradom}. As mentioned in Section \ref{subsec:main}, a weaker assumption is sufficient; namely, one may only assume $\psi$ to belong to
  \begin{equation}
 \label{eq:extradom2}
  \dom_1:= \left\{\psi\in\domf:\phitr_\lambda({\bf k}) \in L^1(\R^2)\right\},
 \end{equation}
 to get Propositions \ref{prop:charge_cont} and \ref{pro:sobregq}, and thus Theorem \ref{teo:wp}. However, it is not possible at the moment to further weaken the assumptions on the initial datum. For instance, the natural assumption $\psi_0\in\domf$ does not work as we cannot prove the regularity of the forcing term $f$ of \eqref{eq:charge_comp} without requiring that $\phitr_{\la,0}\in L^1(\R^2)$. This fact occurs also in the nonlinear point interaction problem (see \cite[Remark 1.5]{CCT}). Precisely, in the nonlinear case \eqref{eq:extradom2} is replaced by a slightly stronger requirement (see \cite[Eq. $(1.17)$]{CCT}), but this is connected to the proof of the energy conservation which is neither required nor possible in the linear non-autonomous case considered in the present paper. 
 \end{rem}
 
 \begin{rem}
  \label{rem:alfauno}
  Concerning $\alpha$, one can check that Proposition \ref{prop:charge_cont} can be proved under the sole assumption $\alpha\in C([0,T])$, for every $T>0$, whereas Proposition \ref{pro:sobregq} can be proved under the sole assumption $\alpha\in C([0,T])\cap H^{1/2}(0,T)$, for every $T>0$.
 \end{rem}


\subsection{Further regularity of the charge}
\label{subsec:abscont}

Here we prove some further Sobolev regularity of the solution of \eqref{eq:charge_eq} obtained in Section \ref{subsec:charge}. The strategy to prove such a further regularity has been developed in \cite[Section 5.1]{ACCT}.

\begin{pro}[$W^{1,1}$-regularity of the charge]
 \label{pro:reg_abs}
The solution of \eqref{eq:charge_eq} provided by Proposition \ref{prop:charge_cont} belongs to $W^{1,1}(0,T)$, for every $T>0$.
\end{pro}

\begin{proof}
 As for Proposition \ref{pro:sobregq}, the proof consists of two steps: the $W^{1,1}$- regularity of the charge $q$ on small intervals and the extension of such regularity to any sub-interval of $[0,+\infty)$.

 \emph{Step (i): there exists $T>0$ such that $q\in W^{1,1}(0,T)$.} Exploiting again the compact form of \eqref{eq:charge_eq} provided by \eqref{eq:charge_comp}, the first point is to prove the $W^{1,1}$-regularity of $f$. We rely again on the decomposition given by \eqref{eq:Aunoduetre}. Combining, \eqref{eq:est_Buno} with \eqref{eq:extradom}, we see that $B_1\in H^1(0,T)$, and hence $B_1\in W^{1,1}(0,T)$, for every $T>0$. Consequently, by \cite[Theorem 5.3]{CF}, $IB_1\in W^{1,1}(0,T)$. On the other hand, recalling \eqref{eq:Bdue} and \eqref{eq:est_Bdue} and using again \cite[Theorem 5.3]{CF}, $IB_2\in W^{1,1}(0,T)$, and thus $f\in W^{1,1}(0,T)$. Notice that \cite[Theorem 5.3]{CF} is actually proved for real-valued functions, but a direct inspection of the proof shows that it holds for complex-valued functions as well .
 
 Now, it is left to show that the map $\mathcal{G}$ defined by \eqref{eq:mapG} is a contraction in a suitable subset of $W^{1,1}(0,T)$, for a sufficiently small $T>0$. As contraction space we choose
 \[
  \B_{T} := \left\{q\in W^{1,1}(0,T): \left\|q\right\|_{L^\infty(0,T)}+\left\|q\right\|_{W^{1,1}(0,T)}\leq \eta_{T} \right\},
 \]
 with $\eta_{T}:=2\max\{\|f\|_{L^\infty(0,T)}+\|f\|_{W^{1,1}(0,T)},1\}$, which is a complete metric space whenever endowed with the norm
 \[
  \left\| \cdot \right\|_{\B_{T}}:= \left\|\cdot \right\|_{L^\infty(0,T)}+ \left\|\cdot \right\|_{W^{1,1}(0,T)}.
 \]
 First, we show that $\mathcal{G}(\B_{T})\subset \B_{T}$. From \cite[Theorem 5.3]{CF}, setting $\zeta_{\B_T}:=\|\zeta\|_{\B_T}$,
 \[
  \left\|Aq\right\|_{W^{1,1}(0,T)} \leq C_T \left\| \zeta q \right\|_{\B_{T}} \leq C_T \zeta_{\B_T} \left\| q \right\|_{\B_T} \leq C_T  \zeta_{\B_T}\eta_T,
 \]
 where $C_T$ denotes a generic positive constant such that $C_T\to0$, as $T\to0$, and that can be redefined from line to line. Hence, combining with \eqref{eq:estGuno_inf},
 \begin{equation}
  \label{eq:stimaguno_bis}
  \left\|Aq\right\|_{\B_{T}}\leq C_T\zeta_{\B_T}\eta_{T},
 \end{equation}
 so that
 \[
  \left\|\mathcal{G}(q) \right\|_{\B_{T}} \leq\eta_{T} \left(  \tfrac{1}{2}+ C_T\zeta_{\B_T}\right).
 \]
 Consequently, as the term in brackets is equal to $\frac{1}{2}+o(1)$ as $T\to0$, $\mathcal{G}(q)\in \B_{T}$ for $T$ sufficiently small. Finally, consider two functions $q_1,\,q_2\in \B_{T}$. By \eqref{eq:relG} and \eqref{eq:stimaguno_bis}
 \[
  \left\|\mathcal{G}(q_1)-\mathcal{G}(q_2) \right\|_{\B_{T}} \leq C_T \zeta_{\B_T} \left\|q_1-q_2 \right\|_{\B_{T}}
 \]
 and again, since $C_T\to0$ as $T\to0$, $\mathcal{G}$ is a contraction on $\B_{T}$ for $T$ sufficiently small.
 
  \emph{Step (ii): $q\in W^{1,1}(0,T)$ for every $T>0$.} By Step (i), there exists $T_0>0$ such that $q\in W^{1,1}(0,T_0)$. Consider, then, as in the proof of Proposition \ref{pro:sobregq}, equation \eqref{eq:charge_new}. Arguing as before and using \cite[Lemma 5.2]{ACCT} with $T=T_0$ and $h=\zeta q$, one can see that $f_1\in W^{1,1}(0,T)$ for every $T>0$. Hence, there exist $T_1>0$ and $q_1\in W^{1,1}(0,T_1)$ which solves \eqref{eq:charge_new}. Since $q(t)=q_1(t-T_0)$ for every $t\in[T_0,T_0+T_1]$, one finds that $q\in W^{1,1}(0,T_0+T_1)$.
  
  In other words, if one establishes $W^{1,1}$-regularity up to a time $T_0$, then it can be extended up to $T_0+T_1$ for some $T_1>0$. However, as for the $H^{1/2}$-regularity, one can easily check that such a procedure allows in fact to extend $W^{1,1}$-regularity up to any time $T>0$.
\end{proof}

\begin{rem}
  \label{rem:regsob}
  In contrast to Propositions \ref{prop:charge_cont} and \ref{pro:sobregq}, Proposition \ref{pro:reg_abs} requires that $\psi_0\in\dom$. Indeed, if one assumes only $\psi_0\in\dom_1$, then one cannot guarantee in general that $B_1\in H^1(0,T)$ and, thus, that $B_1\in W^{1,1}(0,T)$ and $I B_1\in W^{1,1}(0,T)$.
 \end{rem}
 
 \begin{rem}
  \label{rem:alfadue}
  Concerning $\alpha$, one can check that Proposition \ref{pro:reg_abs} can be proved under the sole assumption $\alpha\in W^{1,1}([0,T])$, for every $T>0$.
 \end{rem}


\subsection{Global well-posedness of (\ref{eq:weak_LSE})}
\label{subsec:wp}

Exploiting the tools developed in the previous sections we can finally give the proof of Theorem \ref{teo:wp}.

\begin{proof}[Proof of Theorem \ref{teo:wp}]
 We have to prove that, as one plugs the charge $q$ obtained by Propositions \ref{prop:charge_cont}, \ref{pro:sobregq} and \ref{pro:reg_abs} into \eqref{eq:ansatz}, the resulting function $\psi$ is the unique solution of \eqref{eq:weak_LSE} and satisfies the mass conservation.

 Concerning uniqueness one can argue as in the nonlinear case, since it is a direct consequence of the uniqueness of the solution of the charge equation and of the ansatz \eqref{eq:ansatz} (for more details see \cite[Remark 1.3]{CCT}). On the other hand, if one is able to show that $ \psi(t,\cdot) \in \domf $ for all $t>0$, then the fact that \eqref{eq:ansatz} solves \eqref{eq:weak_LSE}, whenever $ q(t) $ solves \eqref{eq:charge_eq}, can be proved as in \cite[Theorem 2.1]{DFT2} (see also \cite{CCF}). In that case the model is different (\emph{moving point interactions} are discussed), but the argument is completely analogous and we omit it for the sake of brevity. Finally, the mass conservation can be proved repeating the same computations of \cite[Proof of Theorem 1.2 - Part 1]{CCT} (it is actually easier thanks to the further regularity of the charge).
 
Therefore, it is left to prove that $\psi(t,\cdot) \in \domf $ for any $t>0$, that is
 \[
  \psi(t,\cdot) - q(t)\gl\in H^1(\R^2).
 \]
 Using the Fourier transform and an integration by parts (as in \cite[Proof of Theorem 1.1]{CCT}), this is equivalent to show that
 \begin{equation}
  \label{eq:regparts}
  e^{-\imath|\kv|^2t}\left(\psitr_0(\kv)-\frac{q(0)}{2\pi(|\kv|^2+\lambda)}\right)-\frac{1}{2\pi(|\kv|^2+\lambda)}\int_0^te^{-\imath|\kv|^2(t-\tau)}(\dot{q}(\tau)-\imath\lambda q(\tau))\dtau
 \end{equation}
 belongs to $L^2(\R^2,(|\kv|^2+1)\dkv)$. As the former term is the Fourier transform of $U(t)\phi_{\lambda,0}$, it clearly belongs to $L^2(\R^2,(|\kv|^2+1)\dkv)$ (actually $U(t)\phi_{\lambda,0}\in H^2(\R^2)$). Concerning the latter term, we first set $ \lambda = 1 $ (for the sake of simplicity) and then change variables to get
 \begin{multline*}
  \displaystyle \int_{\R^2}\bigg|\frac{1}{2\pi(|\kv|^2+1)}\int_0^te^{-\imath|\kv|^2(t-\tau)}(\dot{q}(\tau)-\imath q(\tau))\dtau\bigg|^2(1+|\kv|^2)\dkv\\[.3cm]
  \displaystyle \leq C\int_0^{+\infty}\bigg[\bigg|\int_0^te^{\imath\varrho\tau}\dot{q}(\tau)\dtau\bigg|^2+\bigg|\int_0^te^{\imath\varrho\tau}q(\tau)\dtau\bigg|^2\bigg]\frac{\drho}{1+\varrho}.
 \end{multline*}
 Furthemore,
 \[
  \int_0^te^{\imath\varrho\tau}\dot{q}(\tau)\dtau = \sqrt{2\pi}\:\widetilde{{\dot \xi}} (-\varrho),\qquad\text{and}\qquad\int_0^te^{\imath\varrho\tau}q(\tau)\dtau = \sqrt{2\pi} \: \widetilde{\one_{[0,t]}q}(-\varrho),
 \]
 with
 \[
  \xi(\tau):=
  \begin{cases}
   q(0),    & \mbox{if }\tau\leq0,\\
   q(\tau), & \mbox{if }0<\tau<t,\\
   q(t),    & \mbox{if }\tau\geq t,
  \end{cases}
 \]
 so that
 \[
  \int_0^{+\infty}\bigg[\bigg|\int_0^te^{\imath\varrho\tau}\dot{q}(\tau)\dtau\bigg|^2+\bigg|\int_0^te^{\imath\varrho\tau}q(\tau)\dtau\bigg|^2\bigg]\frac{\drho}{1+\varrho}\leq  2\pi \int_\R\frac{ | \widetilde{{\dot \xi}} (\varrho) |^2}{1+|\varrho|}\drho+\int_\R\frac{|\widetilde{\one_{[0,t]}q}(\varrho)|^2}{1+|\varrho|}\drho.
 \]
 Finally, since $q\in C_{\log,1}[0,T]\cap H^{1/2}(0,T)$, (arguing as in \cite[Proof of Proposition 2.1]{CCT}) one finds that $\xi\in \dot{H}^{1/2}(\R)$ and thus $\dot{\xi}\in H^{-1/2}(\R)$. As a consequence, also the latter term of \eqref{eq:regparts} belongs to $L^2(\R^2,(|\kv|^2+1)\dkv)$, which completes the proof.
\end{proof}

\begin{rem}
  \label{rem:regsob2}
  Comparing the above proof with \cite[Proof of Theorem 1.1]{CCT} one can see the simplifying role of the absolute continuity of the charge. In the previous proof, indeed, the duality arguments developed in \cite[Remarks 2.4 and 2.5]{CCT} are not required in order to justify the integrals involved. However, using those tools one could prove Theorem \ref{teo:wp} as well even without absolute continuity, that is, even with the assumption $\psi_0\in\dom$ replaced by $\psi_0\in\dom_1$.
\end{rem}

\begin{rem}
 \label{rem:dominio}
 In the previous proof one immediately sees that, in order to establish strong well-posedness in the operator domain, one should be able to prove $H^1$-regularity of the charge, possibly adding further assumptions on $\psi_0$. Unfortunately, to the best of our knowledge, this is not the possible due to the very singular behavior of the integral kernel $\I$ (see \cite[Remarks 5.5 and 5.8]{CF}).
\end{rem}

\begin{rem}
  \label{rem:alfatre}
  In the previous proof the assumption $\alpha\in W^{1,\infty}_{loc}([0,+\infty))$ seems to play no actual role. However, it is required in order to prove that \eqref{eq:ansatz} solves \eqref{eq:weak_LSE} whenever $ q $ solves \eqref{eq:charge_eq} arguing as in \cite[Theorem 2.1]{DFT2}.
 \end{rem}


\section{The Laplace transform of the charge}
\label{sec:ion}

Following the strategy introduced by \cite{CDFM,CCLR}, in order to prove Theorem \ref{teo:cion} it is necessary to discuss the regularity properties of the (unilateral) Laplace transform of the charge. Our analysis retraces the method used in \cite{CCL} for the one-dimensional case. In particular, we split the analysis considering first the right open half-plane,  then the imaginary axis and the left open half-plane. However, as a preliminary step, we have to prove that the charge actually admits a Laplace transform.
\smallskip

Note that in the following we tacitly assume that $\psi_0=\varphi_0$, with $\varphi_0$ defined by \eqref{eq:eigen}, and that $\alpha$ satisfies the monochromatic assumption with $\eta=0$ and $c=0$. For a brief outline on the modifications required to deal with the general case we refer the reader to the Appendix \ref{appendix}.

\smallskip

As $\varphi_0\in\dom$ and $\alpha_0\sin(\omega\,\cdot)\in W^{1,\infty}_{loc}([0,\infty))$, Theorem \ref{teo:wp} is valid, as well as Propositions \ref{prop:charge_cont}, \ref{pro:sobregq} and \ref{pro:reg_abs} and Lemma \ref{lemma:qlogcont}. Hence, $\psi$ and the charge $q$ are meant throughout as the solutions of \eqref{eq:weak_LSE} and \eqref{eq:charge_eq}, respectively, with the above choice of $\psi_0$ and $\alpha$.


\subsection{Laplace transformability of the charge}
\label{subsec:integr}

As already mentioned, the preliminary step is to prove the actual Laplace transformability of the charge. Namely, we need to show that there exists $\nu>0$ such that
\[
 \int_0^{+\infty} e^{-\nu t}\,|q(t)|\dt<+\infty.
\]
To this aim, for any fixed $T\in(0,+\infty]$ and $\nu>0$, we define the space
\[
 L_{\nu}^T:=\left\{f:[0,+\infty)\to\C:\|f\|_{\nu,T}:=\int_{0}^{T}e^{-\nu s}\,|f(s)|\ds<+\infty\right\}.
\]
Then, we can prove the following result.

\begin{pro}
 \label{pro:q_lap}
 There exists $\nu>1$ such that the charge $q(t)$ belongs to $L_{\nu}^{+\infty}$.
\end{pro}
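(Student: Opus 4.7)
The plan is to control $|q(t)|$ via iteration of the Volterra equation \eqref{eq:charge_eq} and then to pass to the Laplace transform using the classical identity $\mathcal{L}[\I](p)=1/\log p$, valid for $\mathrm{Re}\,p>1$. Two preliminary reductions will be used. First, since Assumption \ref{asum:mono} makes $\alpha$ globally bounded on $[0,+\infty)$,
\begin{equation*}
 Z:=\sup_{t\ge 0}|\zeta(t)|<+\infty,\qquad \zeta(t)=4\pi\bigl(\alpha(t)+\th_1-\tfrac{\imath}{8}\bigr).
\end{equation*}
Second, since $\psi_0=\varphi_\alpha=\const_\alpha\,\g_{-\lambda_\alpha}$, applying the decomposition \eqref{eq:Bdue} with $\lambda=-\lambda_\alpha>0$ gives $\phi_{\lambda,0}=0$ and $q_0=\const_\alpha$, and, using $|e^{-\imath\lambda_\alpha\tau}|=1$, the logarithmic behaviour of $\J$ and the smoothness of $Q(-\lambda_\alpha,\cdot)$, the estimate
\begin{equation*}
 |h(\tau)|:=|(U(\tau)\psi_0)(\zev)|\le C(1+|\log\tau|),\qquad\forall\tau>0.
\end{equation*}
In particular $\int_0^{+\infty}e^{-\nu\tau}|h(\tau)|\dtau<+\infty$ for every $\nu>0$.

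Next, take moduli in \eqref{eq:charge_eq}. Since $\I>0$ on $(0,+\infty)$ (clear from \eqref{eq:i_kernel}), one obtains the scalar Volterra inequality
\begin{equation*}
 |q(t)|\le|f(t)|+Z\int_0^t\I(t-\tau)|q(\tau)|\dtau,\qquad\text{with}\quad |f(t)|\le 4\pi(\I*|h|)(t).
\end{equation*}
Iterating $N$ times produces the partial sum $\sum_{n=0}^{N}Z^n\,(\I^{*n}*|f|)(t)$, where $\I^{*n}$ denotes the $n$-fold time-convolution of $\I$ with itself and $\I^{*0}*|f|:=|f|$, together with a remainder pointwise dominated, thanks to the iterated-convolution estimate \eqref{eq:ind_rel} with $g=q$ and $\zeta_t\le Z$, by $Z^{N+1}\mu(t,N,0)\,\|q\|_{C[0,t]}$. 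Because $q\in C([0,+\infty))$ and $\sum_{N\ge 0}Z^{N+1}\mu(t,N,0)<+\infty$ for every fixed $t$ (this is precisely the summability argument closing the proof of Proposition \ref{prop:charge_cont}), the remainder vanishes as $N\to+\infty$, and the Neumann representation
\begin{equation*}
 |q(t)|\le\sum_{n=0}^{+\infty}Z^n\,(\I^{*n}*|f|)(t),\qquad\forall t>0,
\end{equation*}
holds pointwise.

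Finally, fix any real $\nu>\max\{1,e^Z\}$, so that $\log\nu>Z$. Multiplying by $e^{-\nu t}$, integrating over $(0,+\infty)$ and exchanging sum and integral by Tonelli (all summands are non-negative), the convolution theorem applied iteratively together with $\mathcal{L}[\I](\nu)=1/\log\nu$ gives
\begin{equation*}
 \|q\|_{\nu,+\infty}\le\sum_{n=0}^{+\infty}\Bigl(\tfrac{Z}{\log\nu}\Bigr)^{\!n}\mathcal{L}[|f|](\nu)=\frac{\mathcal{L}[|f|](\nu)}{1-Z/\log\nu}\le\frac{4\pi\,\mathcal{L}[|h|](\nu)}{\log\nu-Z}<+\infty,
\end{equation*}
where the last bound uses $|f|\le 4\pi\,\I*|h|$, the integrability of $|h|$ against $e^{-\nu\tau}$ established above, and once more $\mathcal{L}[\I](\nu)=1/\log\nu$. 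This produces the required $\nu>1$ with $q\in L_\nu^{+\infty}$. The main obstruction I expect is establishing the pointwise Neumann bound for $|q|$ itself (so that the subsequent Tonelli and convolution-theorem manipulations are rigorously justified); this step is available here only because Assumption \ref{asum:mono} upgrades the $W^{1,\infty}_{loc}$-regularity of $\alpha$ to a global $L^\infty$-bound, allowing the time-dependent constant $\zeta_t$ in \eqref{eq:ind_rel} to be replaced by the $t$-independent constant $Z$.
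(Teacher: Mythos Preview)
Your argument is correct and takes a somewhat different route from the paper. The paper first isolates $f\in L_\nu^{+\infty}$ as a separate lemma and then runs a contraction argument for $\mathcal{G}$ directly in the spaces $L_\nu^T$, bounding $\|\I\|_{\nu,T}$ by hand via a three-interval splitting and letting $T\to+\infty$ at the end. You instead recycle the iterated bound $|A^ng|\le Z^n\mu(\cdot,n-1,0)\|g\|_\infty$ from the proof of Proposition \ref{prop:charge_cont} to obtain a pointwise majorisation $|q|\le\sum_n Z^n\,\I^{*n}*|f|$, and then invoke the explicit identity $\mathcal{L}[\I](\nu)=1/\log\nu$ to sum a geometric series in Laplace space. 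Both arguments are at bottom the same observation, namely that the convolution operator $g\mapsto Z\,\I*g$ has norm $Z/\log\nu<1$ on $L^1(e^{-\nu t}\,dt)$ for $\nu$ large; you make this explicit via the known transform and get the clean threshold $\nu>e^Z$, while the paper reproves the needed bound on $\int_0^\infty e^{-\nu t}\I(t)\,dt$ from scratch using pointwise estimates on $\I$. One minor imprecision: you appeal to ``the smoothness of $Q(-\lambda_\alpha,\cdot)$'' to justify $|h(\tau)|\le C(1+|\log\tau|)$, but $Q(-\lambda_\alpha,\tau)$ actually grows logarithmically as $\tau\to+\infty$ (as the paper's Lemma \ref{lem:forz_exp} shows); this does not affect your bound since the logarithmic growth is already absorbed into $C(1+|\log\tau|)$, but the wording should be adjusted.
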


In the proof of Proposition \ref{pro:q_lap}, we use again the compact form of the charge equation given by \eqref{eq:charge_comp}. In particular, we start by establishing the following property for the function $f$ defined by \eqref{eq:for_comp}.

\begin{lem}
 \label{lem:forz_exp}
 There exists $\nu>1$ such that the function $f$ defined by \eqref{eq:for_comp} belongs to $L_{\nu}^{+\infty}$.
\end{lem}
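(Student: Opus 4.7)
The plan is to exploit the decomposition
\[
4\pi(U(\tau)\psi_0)(\zev)=B_1(\tau)+q_0\big(B_{2,1}(\tau)+B_{2,2}(\tau)\big)
\]
already recorded in \eqref{eq:Aunoduetre}--\eqref{eq:est_Bdue}, which reduces the claim to controlling the three Volterra convolutions $IB_1$, $IB_{2,1}$ and $IB_{2,2}$ in the weighted space $L_\nu^{+\infty}$. The crucial ingredient is that $\I$ itself belongs to every $L_\nu^{+\infty}$ with $\nu>1$: this follows from the classical identity $\int_0^{+\infty}e^{-st}\I(t)\dt=1/\log s$, valid on $\{\re(s)>1\}$ (cfr.\ \cite[Sect.~18.3]{E}), which pins the abscissa of convergence of $\I$ at $1$.

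The first step is then a weighted Young-type estimate for the Volterra convolution: if $g\in L_\nu^{+\infty}$ and $\I\in L_\nu^{+\infty}$, then by Fubini--Tonelli and the substitution $s=t-\tau$,
\[
\|Ig\|_{\nu,+\infty}\leq\|\I\|_{\nu,+\infty}\,\|g\|_{\nu,+\infty},
\]
the two exponential weights decoupling neatly. It therefore suffices to show that $B_1,\,B_{2,1},\,B_{2,2}\in L_\nu^{+\infty}$ for every $\nu>0$, and then pick any $\nu>1$.

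Each of these memberships can be read off from the explicit formulas already displayed. For $B_1$, the Fourier representation
\[
B_1(\tau)=2\int_{\R^2}e^{-\imath|\kv|^2\tau}\phitr_{\la,0}(\kv)\dkv
\]
gives $\|B_1\|_{L^\infty(0,+\infty)}\leq 2\|\phitr_{\la,0}\|_{L^1(\R^2)}$; the right-hand side is finite because $\psi_0=\varphi_\al\in\dom$ forces $\phi_{\la,0}\in H^2(\R^2)$, so that $(1+|\kv|^2)\phitr_{\la,0}\in L^2(\R^2)$ and, by Cauchy--Schwarz against $(1+|\kv|^2)^{-1}\in L^2(\R^2)$, $\phitr_{\la,0}\in L^1(\R^2)$. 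Boundedness of $B_1$ then yields $B_1\in L_\nu^{+\infty}$ for every $\nu>0$. For $B_{2,1}(\tau)=e^{\imath\la\tau}(-\gamma-\log\tau)$ the pointwise bound $|B_{2,1}(\tau)|\leq|\gamma|+|\log\tau|$ suffices, since $e^{-\nu\tau}|\log\tau|$ is integrable on $(0,+\infty)$ for every $\nu>0$ (it is $L^1_{\rm loc}$ near $0$ and exponentially killed at infinity). Finally, $B_{2,2}$ is smooth and has at most polynomial growth in $\tau$ thanks to the explicit definition of $Q(\la,\cdot)$ in \cite[Eq.~(2.34)]{CCT}, so $e^{-\nu\tau}|B_{2,2}(\tau)|\in L^1(0,+\infty)$ for every $\nu>0$.

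The only genuinely delicate item is the Laplace-transform identity for $\I$, and that is precisely what forces the lower bound $\nu>1$ in the statement; every other estimate is uniform in $\nu>0$. Combining the weighted Young inequality with the three pointwise bounds gives $f\in L_\nu^{+\infty}$ for every $\nu>1$, which is the assertion of the lemma.
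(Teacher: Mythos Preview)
Your argument is correct and takes a genuinely different route from the paper's. The paper works directly with the explicit initial datum $\psi_0=\varphi_\alpha$, computes $(U(\tau)\varphi_\alpha)(\zev)$ in closed form, splits $f=J_1+J_2+J_3$, and then bounds each $|J_i(t)|$ pointwise in terms of the primitive $\NN(t)=\int_0^t\I(\tau)\dtau=\mu(t,0,0)$, finally appealing to the asymptotics $\NN(t)\sim e^t$ to conclude $J_i\in L_\nu^{+\infty}$ for $\nu>1$. Your strategy instead keeps the general decomposition $B_1+q_0(B_{2,1}+B_{2,2})$ valid for any $\psi_0\in\dom$, and replaces the pointwise control on $f$ by a single weighted Young inequality for the Volterra convolution, reducing the question to the membership $\I\in L_\nu^{+\infty}$ (which you read off from the Laplace identity $\widehat{\I}(s)=1/\log s$ and the positivity of $\I$) together with elementary checks that $B_1,B_{2,1},B_{2,2}\in L_\nu^{+\infty}$ for all $\nu>0$. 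Your approach is shorter and yields the lemma uniformly for every $\psi_0\in\dom$, not only for the bound state; the paper's computation, on the other hand, produces explicit pointwise bounds on $f(t)$ (essentially $|f(t)|\lesssim \NN(t)(1+\log t)$), which is slightly more information than the bare $L_\nu^{+\infty}$ membership. Note also that the convolution inequality you use is exactly what the paper itself invokes later, citing \cite[(A1)--(A3)]{CCLR}, in the proof of Proposition~\ref{pro:q_lap}.
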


\begin{proof}
 First, recall that
 \[
  (U_0(\tau)\varphi_0)(\zev)=\frac{1}{2\pi}\int_{\R^2}e^{-\imath|\kv|^2\tau}\vphitr_{0}(\kv)\dkv=\frac{\const_0}{4\pi^2}\int_{\R^2}\frac{e^{-\imath|\kv|^2\tau}}{|\kv|^2-\lambda_0}\dkv.
 \]
 Consequently, from \cite[Eq. (2.42)]{CCT}, we have
 \[
  (U_0(\tau)\varphi_0)(\zev)=\frac{\const_0}{4\pi}\left(-e^{-\imath\lambda_0\tau}\log(-\lambda_0)+e^{-\imath\lambda_0\tau}(-\gamma-\log\tau)+\frac{e^{-\imath\lambda_0\tau}}{\pi}Q(-\lambda_0,\tau)\right)
 \]
 (with $Q(\cdot\,,\cdot)$ given again by \cite[Eq. (2.34)]{CCT}). Hence,
 \begin{align*}
  f(t) = & \, -\const_0\log(-\lambda_0)\int_0^t\I(t-\tau)e^{-\imath\lambda_0\tau}\dtau+\const_0\int_0^t\I(t-\tau)e^{-\imath\lambda_0\tau}(-\gamma-\log\tau)\dtau+\\[.2cm]
         & \, +\frac{\const_0}{\pi}\int_0^t\I(t-\tau)e^{-\imath\lambda_0\tau}Q(-\lambda_0,\tau)\dtau=:J_1(t)+J_2(t)+J_3(t).
 \end{align*}
 Now, recalling that $\mathcal{I}\geq 0$, one observes that $J_1$ is continuous and satisfies
 \[
  |J_1(t)|\leq\const_0|\log(-\lambda_0)|\NN(t),
 \]
 with
 \begin{equation}
  \label{eq:NN}
  \NN(t):=\int_0^t\I(\tau)\dtau=\mu(t,0,0).
 \end{equation}
 Therefore, since by \cite[Section 18.3, pag 221]{E} $\NN(t)\sim e^t$, as $t\to+\infty$, one sees that $J_1\in L_{\nu}^{+\infty}$, for every $\nu>1$. On the other hand, there results that
 \[
  J_2(t)=\const_0\int_0^t\I(t-\tau)(-\gamma-\log\tau)\dtau+\const_0\int_0^t\I(t-\tau)\underbrace{\big(e^{-\imath\lambda_0\tau}-1)(-\gamma-\log\tau)}_{:=\ell(\tau)}\dtau.
 \]
 From \cite[Eq. (2.29)]{CCT}, the first integral in the above formula is equal to 1 for every $t\geq0$. Hence, combining $\|\ell\|_{L^\infty(0,t)}\leq C(1+\vert\log t\vert)$ with \cite[Proposition 4.2]{CF}, we obtain
 \[
  |J_2(t)|\leq\const_0\big(1+C\NN(t)(1+\vert \log t\vert)\big)
 \]
 and thus, again by \cite[Section 18.3, pag 221]{E}, also $J_2\in L_\nu^{+\infty}$, for every $\nu>1$. Finally, using \cite[Eq. (2.33)]{CCT},
 \[
  \tfrac{1}{\pi}Q(-\lambda_0,\tau)=\gamma+\log(-\lambda_0\tau)-\mathrm{ci}(-\lambda_0\tau)+\imath\mathrm{si}(-\lambda_0\tau),
 \]
 where $\mathrm{si}(\cdot)$ and $\mathrm{ci}(\cdot)$ stand for the \emph{sine} and \emph{cosine integral functions} defined by \cite[Eqs. 8.230]{GR}. Using the definition and \cite[Eqs. 3.721]{GR}, one sees that the last term is bounded on $[0,+\infty)$; whereas, using 
 \[
  \gamma+\log(-\lambda_0\tau)-\mathrm{ci}(-\lambda_0\tau)=-\int_0^{-\lambda_0\tau}\frac{\cos s-1}{s}\ds
 \]
 (again by \cite[Eqs. 8.230]{GR}), one sees that $\gamma+\log(-\lambda_0\tau)-\mathrm{ci}(-\lambda_0\tau)$ is locally bounded on $[0,+\infty)$ and bounded by  $C\log(-\lambda_0\tau)$, for some constant $C>0$, as $\tau \to +\infty$. Hence, arguing as before one finds that $J_3\in L_\nu^{+\infty}$ for every $\nu>1$ as well, thus concluding the proof.
\end{proof}

We are now in a position to establish the Laplace transformability of the charge. To this aim, it is sufficient to prove that the map $\g$ introduced by \eqref{eq:mapG} is a contraction in a suitable space that contains the solution of \eqref{eq:charge_eq} obtained in Section \ref{sec:global}.

\begin{proof}[Proof of Proposition \ref{pro:q_lap}]
 Preliminarily, we fix some parameters. Recalling \eqref{eq:zetagreca}, let $\zeta_\infty:=\max_{[0,+\infty)}\big|\zeta\big|$ and let $T_*>0$ be such that $\I(t)\leq C_{T_*}e^t$ for all $t\geq T_*$ and for some finite $C_{T*}$ (this is possible by \cite[Eq. $(8)$]{CF}). As a consequence, recalling that $\NN(t)\to0$, as $t\to0$, we can also fix $T_1>0,\,T_2>\max\{T_*,T_1\},\text{ and }\nu>1$ such that
 \begin{equation}
  \label{eq:cond_aux}
  \left\{\begin{array}{l}
   \displaystyle \NN(T_1)<\frac{1}{2\zeta_\infty},\\[.5cm]
   \displaystyle \I(T_2)>\I(T_1),\\[.5cm]
   \displaystyle \frac{\I(T_2)}{\nu}+\frac{C_{T_*}}{\nu-1}<\frac{1}{2\zeta_\infty}-\NN(T_1).
  \end{array}\right.
 \end{equation}
  
 Now, let $T>T_2$ and consider the map $\mathcal{G}(q)$ defined by \eqref{eq:mapG}. Using \cite[$\text{(A1)--(A3)}$]{CCLR}, one can see that
 \begin{equation}
 \label{eq:alg_exp}
  \|\mathcal{G}(q)\|_{\nu,T}\leq\|f\|_{\nu,T}+\zeta_\infty\|\I\|_{\nu,T}\|q\|_{\nu,T},\qquad\forall q\in L_{\nu,T}.
 \end{equation}
 On the other hand, using Lemma \ref{lem:forz_exp}, we can define
 \[
  b_{\nu,f}:=2\max\{1,\|f\|_{\nu,\infty}\}\qquad\text{and}\qquad B_{\nu,T,f}:=\{q\in L_{\nu,T}:\|q\|_{\nu,T}\leq b_{\nu,f}\}.
 \]
 Hence, combining with \eqref{eq:alg_exp}, there results
 \[
  \|\mathcal{G}(q)\|_{\nu,T}\leq b_{\nu,f}\left(\frac{1}{2}+\zeta_\infty\|\I\|_{\nu,T}\right),\qquad\forall q\in B_{\nu,T,f}.
 \]
 and thus, since \eqref{eq:cond_aux} yields
 \begin{align*}
  \|\I\|_{\nu,T} & =\int_0^{T_1}\dt\:e^{-\nu t}\I(t)+\int_{T_1}^{T_2}\dt\:e^{-\nu t}\I(t)+\int_{T_2}^{T}\dt\:e^{-\nu t}\I(t)\\[.2cm]
                 & \leq \NN(T_1)+\frac{\I(T_2)}{\nu}+\frac{C_{T_*}}{\nu-1}<\frac{1}{2\zeta_\infty},
 \end{align*}
 we obtain that $\mathcal G(B_{\nu,T,f})\subset B_{\nu,T,f}$. Similarly, \eqref{eq:alg_exp} implies that there exists $L\in(0,1)$ such that
 \[
  \|\mathcal{G}(q_1)-\mathcal{G}(q_2)\|_{\nu,T}\leq L\|q_1-q_2\|_{\nu,T},\qquad\forall q_1,q_2\in B_{\nu,T,f}.
 \]
 Consequently, $\mathcal{G}$ is a contraction on $B_{\nu,T,f}$, so that the unique solution of \eqref{eq:charge_eq} belongs to $L_{\nu,T}$ and satisfies $\|q\|_{\nu,T}\leq b_{\nu,f}$, for every $T>0$. However, since $\nu$ is independent of $T$, this entails that, letting $T\to\infty$, one finds $q\in L_\nu^{+\infty}$.
\end{proof}

\begin{rem}\label{rem:alpha}
Note also that Proposition \ref{pro:q_lap} holds even if one replaces the monochromatic assumption with the weaker requirement that $\alpha\in L^{\infty}([0,+\infty))$, provided that the existence of a unique charge is already known.
\end{rem}


\subsection{Extension of the Laplace transform of the charge to the whole open right half-plane}
\label{subsec:Laplace}

A straightforward byproduct of Proposition \ref{pro:q_lap} is that the Laplace transform of the charge, that we denote throughout by $\widehat{q}$,  is well defined and analytic at least on a set $\{p\in\C:\re(p)>\nu\}$, for some $\nu>1$. Here, we aim at showing that it can be actually extended by analyticity at least to the whole open right half-plane.

\begin{pro}
\label{pro:q_halfplane}
The Laplace transform $\widehat q$ of the charge is well defined and analytic on $\{p\in\C: \re(p)>0\}$.
\end{pro}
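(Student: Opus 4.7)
My plan is to Laplace-transform the charge equation \eqref{eq:charge_eq} on the half-plane $\{\re p > \nu\}$ given by Proposition \ref{pro:q_lap} and then use the resulting functional identity, combined with the regularity of $q$ established in Sections \ref{subsec:charge}--\ref{subsec:abscont}, to extend $\widehat{q}$ analytically to the whole open right half-plane.

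First, a direct application of Fubini--Tonelli to the representation \eqref{eq:i_kernel} yields $\widehat{\I}(p)=1/\log p$ on $\{\re p > 1\}$. Taking then the Laplace transform of \eqref{eq:charge_eq} and using that, in the monochromatic case, the frequency-shift rule gives
\[
\widehat{\alpha q}(p) = \tfrac{\alpha_0}{2\imath}\bigl[\widehat{q}(p-\imath\omega)-\widehat{q}(p+\imath\omega)\bigr],
\]
after multiplying through by $\log p$ one arrives at the functional identity
\begin{equation}
\label{eq:sk_funct}
L(p)\,\widehat{q}(p) - 2\pi\imath\alpha_0\bigl[\widehat{q}(p-\imath\omega)-\widehat{q}(p+\imath\omega)\bigr] = \widehat{F}(p),\qquad\re p>\nu,
\end{equation}
with $L(p):=\log p+4\pi(\th_1-\imath/8)$ and $F(t):=4\pi(U(t)\varphi_\alpha)(\zev)$.

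Next, I would check that both coefficients in \eqref{eq:sk_funct} are well-behaved on $\{\re p > 0\}$. The decomposition of $F$ produced in the proof of Lemma \ref{lem:forz_exp} (bounded oscillating factors $e^{-\imath\lambda_\alpha t}$, a single $e^{-\imath\lambda_\alpha t}\log t$ contribution, and a uniformly bounded smooth part) yields an explicit meromorphic expression for $\widehat{F}$ whose only singularity in $\overline{\{\re p \geq 0\}}$ is a polar/logarithmic one at $p = -\imath\lambda_\alpha$ on the imaginary axis; hence $\widehat{F}$ is analytic on $\{\re p > 0\}$. A direct computation shows that the unique solution of $\log p = -4\pi(\th_1 - \imath/8)$ in $\C$ is $p = 4\imath e^{-2\gamma} = -\imath\lambda_\alpha$ (recall that $\lambda_\alpha = -4 e^{-2\gamma}$ under Assumption \ref{asum:mono}), so $L$ is zero-free on $\{\re p > 0\}$.

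The main obstacle is the final analytic-continuation step, because the vertical shifts $p\mapsto p\pm\imath\omega$ in \eqref{eq:sk_funct} preserve $\re p$ and therefore cannot on their own enlarge the real-part range of analyticity. To overcome this I would combine \eqref{eq:sk_funct} with the regularity of the charge: by Propositions \ref{pro:sobregq} and \ref{pro:reg_abs}, $q\in C([0,T])\cap H^{1/2}(0,T)\cap W^{1,1}(0,T)$ for every $T>0$, and an integration by parts combined with Riemann--Lebesgue gives decay of $\tau\mapsto\widehat{q}(\sigma+\imath\tau)$ as $|\tau|\to\infty$ for every $\sigma>\nu$. Rearranging \eqref{eq:sk_funct} as
\[
\widehat{q}(p) = \frac{\widehat{F}(p)+2\pi\imath\alpha_0\bigl[\widehat{q}(p-\imath\omega)-\widehat{q}(p+\imath\omega)\bigr]}{L(p)},
\]
and combining this decay with the zero-freeness of $L$, a Cauchy/contour-deformation argument along vertical strips produces an analytic function on $\{\re p > 0\}$ that still satisfies \eqref{eq:sk_funct} and coincides with $\widehat{q}$ on $\{\re p > \nu\}$. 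Uniqueness of analytic continuation identifies this function with $\widehat{q}$.
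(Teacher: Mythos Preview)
Your setup is correct: the functional identity \eqref{eq:sk_funct} is exactly the paper's \eqref{laplace-charge}, and your claims that $\widehat{F}$ is analytic and $L$ is zero-free on $\{\re p>0\}$ are both valid. The problem is the final continuation step, which you yourself flag as the main obstacle but do not actually resolve. The identity \eqref{eq:sk_funct} couples $\widehat{q}(p)$ only to $\widehat{q}(p\pm\imath\omega)$, i.e.\ to values with the \emph{same} real part; the decay of $\widehat{q}(\sigma+\imath\tau)$ as $|\tau|\to\infty$ that you extract from $q\in W^{1,1}_{\mathrm{loc}}$ still lives on the single vertical line $\re p=\sigma>\nu$. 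A ``Cauchy/contour-deformation argument along vertical strips'' would require knowing $\widehat{q}$ on the left edge of the strip in order to deform into it, which is precisely what you are trying to establish. In short, nothing in your argument produces any control on $q$ better than $q\in L^{+\infty}_\nu$ for some $\nu>1$, and without an a priori bound valid for all $t>0$ the abscissa of analyticity cannot be pushed below $\nu$ by these means.

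The paper's proof bypasses the functional identity entirely at this stage and instead exploits the physical structure of the problem. It writes the survival amplitude as $\Theta=Z_1+Z$ with $Z_1(t)=\braket{\varphi_\alpha}{U(t)\varphi_\alpha}$ and $Z(t)=\imath\int_0^t q(\tau)\,(U(t-\tau)\varphi_\alpha)(\zev)\dtau$. Mass conservation and unitarity of $U$ give $|\Theta(t)|,\,|Z_1(t)|\leq\|\varphi_\alpha\|_{L^2}^2$ for all $t$, so $\widehat{\Theta}$, $\widehat{Z_1}$, and hence $\widehat{Z}$, are automatically analytic on $\{\re p>0\}$. Since $Z$ is a convolution one has $\widehat{Z}(p)=\widehat{Z}_2(p)\,\widehat{q}(p)$ with $\widehat{Z}_2$ explicitly computable, analytic and \emph{non-vanishing} on $\{\re p>0\}$; therefore $\widehat{q}=\widehat{Z}/\widehat{Z}_2$ extends analytically there. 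This a priori boundedness of a quantity built from $q$ is the missing ingredient in your approach, and it is not recoverable from the charge equation and local regularity alone.
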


\begin{proof}
 Recalling \eqref{eq:survival} and \eqref{eq:ansatz}, one can see that 
 \begin{equation}
 \label{eq:theta}
  \Theta(t)=\braket{\varphi_0}{U(t)\varphi_0}+\\
  +\underbrace{\frac{\imath}{2\pi}\lim_{R\to\infty}\int_0^tq(\tau)\bigg(\int_{B_R}e^{-\imath|\kv|^2(t-\tau)}\,\vphitr_0(\kv)\dkv\bigg)\dtau}_{=:Z(t)},
 \end{equation}
 where $B_R$ is the ball of radius $R$ centered at the origin. As the mass is preserved along the flow and $U$ is unitary on $L^2(\R^2)$, we have that
 \[
  |\Theta(t)|\leq\|\varphi_0\|_{L^2(\R^2)}^2,\qquad|\braket{\varphi_0}{U(t)\varphi_0}|\leq\|\varphi_0\|_{L^2(\R^2)}^2
 \]
 and thus both $\Theta$ and $\braket{\varphi_0}{U(\cdot)\varphi_0}$ admit an analytic Laplace transform on $\{p\in\C: \re(p)>0\}$, whence the same is true for $Z$. 
 
 Then, let us compute the Laplace transform of $Z$. First, combining \eqref{eq:eigen} with \cite[Eq. (2.33)]{CCT} and \cite[Eqs. (8.231)]{GR}, there results that
 \begin{multline*}
  \int_{B_R}e^{-\imath|\kv|^2(t-\tau)}\,\vphitr_0(\kv)\dkv=-\frac{\const_0}{2} e^{-\imath\lambda_0(t-\tau)}\times\\[.3cm]
  \times\left(\rm{ci}\big(-\lambda_0 (t-\tau)\big)-\imath\rm{si}\big(-\lambda_0 (t-\tau)\big)-\rm{ci}\big((R^2-\lambda_0)(t-\tau)\big)+\imath\rm{si}\big((R^2-\lambda_0)(t-\tau)\big)\right).
 \end{multline*}
 Since the former term in brackets at the right-hand side is independent of $R$, and the latter converges pointwise to zero, as $R\to\infty$, and is estimated by $C(1+|\log(t-\tau)|)$ (for details see \cite[Pag. 34]{ACCT}), dominated convergence yields
 \begin{align}
 \label{eq:zeta_impl}
  Z(t)= & \, \imath\int_0^tq(\tau)(U(t-\tau)\varphi_0)(\zev)\dtau\nonumber\\[.3cm]
      = & \, -\frac{\imath\const_0}{4\pi} \int_0^tq(\tau)\,e^{-\imath\lambda_0(t-\tau)}\left(\rm{ci}\big(-\lambda_0 (t-\tau)\big)-\imath\rm{si}\big(-\lambda_0 (t-\tau)\big)\right)\dtau.
 \end{align}
 Now, by \cite[Pag. 1115]{GR} we can easily deduce the expression of the Laplace transform of $\rm{ci}$ and $\rm{si}$, given by
\[
\cL({\rm{ci}})(p)=-\frac{1}{2p}\log(1+p^2)\,,\qquad \cL({\rm{si}})(p)=\frac{1}{p}\arctan(1/p)-\frac{\pi}{2p}\,,\quad \re(p)>0.
\]
Then we can compute the Laplace transform of $Z$ using the finite-time convolution property to get
 \begin{equation}
  \label{eq:zeta_tr}
  \widehat{Z}(p)=\widehat{Z}_{2}(p)\,\widehat{q}(p)
 \end{equation}
 with
 \begin{multline}\label{laplace2}
  \widehat{Z}_{2}(p)=\\
  -\frac{\imath\const_0}{4\pi}\,\left[-\frac{\log\left(1+(p+ \imath\lambda_0)^2/(-\lambda_0)^2 \right)}{2(p +\imath\lambda_{0})}-\frac{\imath\arctan(-\lambda_0/(p+ \imath\lambda_0))}{p +\imath\lambda_{0}}+\frac{\imath\pi}{2(p+\imath\lambda_0)}\right]\,
 \end{multline}
 where we choose the negative real semi-axis as suitable branch cut for the definition of the logarithm, so that
 \[
  p=\rho e^{i\vartheta},\quad\vartheta\in(-\pi,\pi)\qquad\Rightarrow\qquad\log p:=\log\rho+i\vartheta.
 \]
 Furthermore, recalling that 
 \[
 \arctan z=\frac{1}{2\imath}\log\frac{1+\imath z}{1-\imath z}
 \]
 (see, e.g., \cite[Pag. 31]{G}), after some computations we can rewrite \eqref{laplace2} as
 \begin{equation}\label{eq:laplaceZ2}
 \widehat{Z}_{2}(p)=\frac{\imath\const_0}{4\pi}\,\frac{\log p -\log(-\lambda_0)-\imath\pi/2}{p +\imath\lambda_{0}}\,,
 \end{equation}
 Clearly, $\widehat{Z}_2$ is analytic and non vanishing on $\{p\in\C: \re(p)>0\}$. Then, by \eqref{eq:zeta_tr}, we conclude that $\widehat{q}$ can be extended by analyticity to the whole $\{p\in\C: \re(p)>0\}$.
\end{proof}

\begin{rem}
 \label{rem:zeta_2}
 In fact, $\widehat{Z}_{2}$ is analytic and non-vanishing also on the imaginary axis. Indeed the sole singularity $p=-\imath\lambda_0$ is easily seen to be removable.
 \end{rem}

Before discussing the extension of $\widehat{q}$ to the imaginary axis, we conclude this section rewriting \eqref{eq:charge_eq} in Laplace domain.

By the monochromatic assumption (with $\eta,c=0$),  \eqref{eq:charge_eq} reads
\begin{equation}
\label{charge-alpha}
q(t)+4\pi\sum_{k=-1,0,1}\beta_{k}\int_{0}^{t}\I(t-\tau)e^{-\imath\omega k\tau}q(\tau)\dtau=f(t),
\end{equation}
where 
\begin{equation}
\label{beta0}
\beta_{0}:= \frac{\gamma}{2\pi}- \frac{\log 2}{2\pi} -\frac{\imath}{8},\qquad \beta_{\pm1}:=\pm \frac{\imath\alpha_0}{2}\,,
\end{equation}
and $f$ is defined by \eqref{eq:for_comp}. Note that, as $\alpha(0)=0$,
\[
\lambda_{0}=4\,\imath\,e^{-4 \pi\beta_{0}-2\log2}.
\]
Now, for $\re(p)>1$, the Laplace transform of $\I$ is $\frac{1}{\log p}$ (see \cite[Sec. 18.3, Eq. (19)]{E}). Hence, we can apply the Laplace transform to \eqref{charge-alpha} so that
\begin{equation}
\label{laplace-charge-alpha}
\widehat{q}(p)+\frac{4\pi}{\log p}\sum_{k=-1,0,1}\beta_{k}\,\widehat q(p+\imath\omega k)=\widehat{f}(p),\qquad\re(p)>1.
\end{equation}
On the other hand, combining \eqref{eq:zeta_impl}, \eqref{eq:zeta_tr} and \eqref{eq:laplaceZ2} one sees that $\widehat{f}(p)=-\frac{4\pi\imath\widehat{Z}_2(p)}{\log p}$, that is
\begin{equation}
\label{in-laplace-charge-alpha}
\widehat{f}(p)=\frac{\const_{0}}{\log p}\left[\frac{\log p -\log(-\lambda_0)-\imath\pi/2}{p +\imath\lambda_{0}}\right],\qquad\re(p)>1.
\end{equation}
As a consequence, suitably rearranging terms, for every $\re(p)>1$,
\begin{equation}
\label{laplace-charge}
\widehat{q}(p)+\frac{2\pi\imath \alpha_0}{\log p+4\pi\beta_{0}}[\widehat{q}(p+\imath\omega\,)-\widehat{q}(p-\imath\omega\,) ]=\frac{\const_{0}}{\log p+4\pi\beta_{0}}\left[\frac{\log p -\log(-\lambda_0)-\imath\pi/2}{p +\imath\lambda_{0}}\right].
\end{equation}
As the sole singularities in \eqref{laplace-charge} are:
\begin{itemize}
 \item[--] the branch point of the $\log$ functions, i.e. $p=0$
 \item[--] the zero of $\log p+4\pi\beta_{0}$, i.e.
 \begin{equation}
  \label{eq-truepole}
  p_{\mathrm{s}}:=\imath e^{2(\log2-\gamma)},
 \end{equation}
\end{itemize}
\eqref{laplace-charge} is actually well-defined on the whole open right half-plane. In addition, by Proposition \ref{pro:q_halfplane}, $\widehat{q}$ is the unique analytic solution on the open right half-plane.


\subsection{Extension of the Laplace transform of the charge to the imaginary axis}
\label{subsec:inversion}

Here we deal with the extension of $\widehat{q}$, as solution of \eqref{laplace-charge}, to the imaginary axis.

The first step consists of decomposing $\widehat{q}$ according to horizontal strips of width $\omega$ in the complex plane. Precisely, for each $n\in\mathbb{Z}$, define
\begin{equation}
\label{eq-qn}
\widehat{q}_{n}(p):=\widehat{q}(p+\imath\omega n),\qquad p\in\S(\omega),
\end{equation}
with
\begin{equation}
\label{eq:strip}
\mathcal{S}(\omega):=\{p\in \mathbb{C}: 0\leq\im(p)<\omega\}.
\end{equation}
As a consequence, each $\widehat{q}_{n}(p)$ satisfies
\begin{multline}
\label{suspiria}
\widehat{q}_{n}(p)+\frac{2\pi\imath \alpha_0}{\log (p+\imath \omega n)+4\pi\,\beta_{0}}[\widehat{q}_{n+1}(p)-\widehat{q}_{n-1}(p) ] \\
 =\frac{\const_0}{\log (p+\imath \omega n)+4\pi\,\beta_{0}}\left[\frac{\log(p+\imath\omega n)-\log(-\lambda_0)-\imath\pi/2}{p +\imath\omega n+\imath\lambda_{0}}\right].
\end{multline}
Therefore, (\ref{laplace-charge}) can be rewritten in a compact form as 
\begin{equation}
\label{Poe}
\widehat{q}(p)=\mathcal{L}(p)\widehat{q}(p)+\widehat{g}(p),\qquad p\in\S(\omega),
\end{equation}
where we use (with a little abuse of notation) the identifications $\widehat{q}(p)=(\widehat{q}_n(p))_n$, $\widehat{g}(p)=(\widehat{g}_n(p))_n$ and set
\begin{equation}
\label{operator}
\begin{split}
&(\mathcal{L}(p)\widehat{q}(p))_{n}:=-\frac{2\pi\imath \alpha_0}{\log (p+\imath \omega n)+4\pi\,\beta_{0}}[\widehat{q}_{n+1}(p)-\widehat{q}_{n-1}(p) ]\\[.3cm]
& \widehat{g}_{n}(p):=\frac{\const_\alpha}{\log (p+\imath\omega n)+4\pi\,\beta_{0}}\left[\frac{\log(p+\imath\omega n)-\log(-\lambda_0)-\imath\pi/2}{p +\imath\omega n+\imath\lambda_{0}}\right]\,,\quad \mbox{for $n\neq0$\,.}
\end{split}
\end{equation}
Recalling Remark \ref{rem:zeta_2},\eqref{beta0} and \eqref{eq-truepole}, the coefficients of \eqref{operator} for $n\neq0$ fail to be analytic in $\mathcal S(\omega)$ at the points 
\begin{equation}
\label{singpoint}
\widetilde{p}=\imath \left( e^{2(\log2-\gamma)}-\omega\widetilde{n}\right),
\end{equation}
for some $\widetilde{n}=\widetilde{n}(\omega)\in\Z$, which are all on the imaginary axis. Since $\textrm{Im}(p)\in [0,\omega)$ whenever $p\in\mathcal{S}(\omega)$,
\begin{equation}
\label{bodega}
\frac{ e^{2(\log2-\gamma)}}{\omega}-1<\widetilde{n}\leqslant \frac{ e^{2(\log2-\gamma)}}{\omega},
\end{equation}
so that $\widetilde{n}$ must be, in fact, a natural number (recall that $\omega>0$).
\begin{rem}
 Note that, consistently, the point $\widetilde{p}$, defined by \eqref{singpoint}, and the point $p_{\mathrm{s}}$, defined by \eqref{eq-truepole}, are connected by the relation
 \[
  \widetilde{p}+\imath\omega\widetilde{n}=p_{\mathrm{s}}.
 \]
\end{rem}

 In addition one also sees that:
\begin{itemize}
 \item if  $N\,\omega=e^{2(\log2-\gamma)}$ for some $N\in\mathbb{N}$, then from \eqref{singpoint} and \eqref{bodega} $\widetilde{n}=N$, and thus $\widetilde{p}=0$;
 \item if, on the contrary,  $N\,\omega\neq e^{2(\log2-\gamma)}$ for every $N\in\mathbb{N}$, then one can find that $\widetilde{p}\neq0$ as follows:
 \begin{itemize}
  \item whenever $\omega>e^{2(\log2-\gamma)}$, condition \eqref{bodega} can be satisfied only for $\widetilde{n}=0$, and thus $\widetilde{p}=\imath\,e^{2(\log2-\gamma)}\neq0$,
  \item whenever $\omega<e^{2(\log2-\gamma)}$, condition \eqref{bodega} can be satisfied only for $\widetilde{n}$ equal to the integer part of $ e^{2(\log2-\gamma)}/\omega$, and thus $\widetilde{p}=\imath\ep\omega\neq0$, with $\ep$ equal to the fractional part of $ e^{2(\log2-\gamma)}/\omega$.
  \end{itemize}
\end{itemize}

From the previous section we know that every component of $\widehat{q}=(\widehat{q}_n)_{n\in\Z}$ is analytic on $\{p\in\S(\omega):\re(p)>0\}$. In the following, we prove that they can be extended by analiticity to $\S(\omega)\cap\imath\R$ up to the point $p=0$, which means that $\widehat{q}$, as a solution of \eqref{laplace-charge}, can be extended by analiticity to $\imath\R$, up to the points $p=\imath\omega n$.

As $p=0$ is the branch point of the logarithm, we split the analysis in two cases: $p\neq0$ and $p=0$.


\subsubsection{The case $p\neq0$.}

In view of the previous remarks, in the case $p\neq0$ it is convenient to carry on the discussion of \eqref{Poe} on $\S(\omega)\cap\imath\R$ by steps:
\begin{itemize}
\item[(i)] existence and analyticity for $p\neq0,\,\widetilde{p}$ (Proposition \ref{profondo rosso});
\item[(ii)] existence and analyticity for $p\neq0$ (Proposition \ref{tenebre}).
\end{itemize}

\begin{pro}[Step (i)]
\label{profondo rosso}
There exist an open and connected set $D(\omega)$, with $(\mathcal{S}(\omega)\cap\imath\R)\setminus\{0\}\subset D(\omega)\subset\mathcal{S}(\omega)$, and a discrete set $\mathcal{P}(\omega)\subseteq D(\omega)\setminus\imath\R$ such that for any $p\in D(\omega)\setminus\big(\mathcal{P}(\omega)\cup\{\widetilde{p}\}\big)$ equation \eqref{Poe} admits a unique solution $(\widehat{q}_n(p))_n\in \ell_{2}(\mathbb{Z})$ and, for each $n\in\mathbb{Z}$, $q_n$ is analytic on $D(\omega)\setminus\big(\mathcal{P}(\omega)\cup\{\widetilde{p}\}\big)$. In particular, all the functions $q_n$ are analytic on $(\mathcal{S}(\omega)\cap\imath\R)\backslash\{0, \widetilde{p}\}$.
\end{pro}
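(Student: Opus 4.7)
The plan is to recast \eqref{Poe} as a Fredholm-type equation in $\ell^2(\Z)$ and invoke the analytic Fredholm theorem. Setting
\[
c_n(p):=-\frac{2\pi\imath\alpha_0}{\log(p+\imath\omega n)+4\pi\beta_0},
\]
the operator in \eqref{operator} reads $(\mathcal{L}(p)v)_n=c_n(p)(v_{n+1}-v_{n-1})$, and on compact subsets of $\mathcal{S}(\omega)$ away from the branch locus of the logarithms and the zeros of each $\log(p+\imath\omega n)+4\pi\beta_0$ one has $|c_n(p)|=O(1/\log|n|)$ as $|n|\to\infty$. Hence the finite-rank truncations of $\mathcal{L}(p)$ converge to $\mathcal{L}(p)$ in operator norm on $\ell^2(\Z)$, so that $\mathcal{L}(p)$ is compact; a parallel inspection of \eqref{operator} yields $\widehat{g}_n(p)=O(1/n)$, so that $\widehat{g}(p)\in\ell^2(\Z)$.

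Next I would identify the singular points of the coefficients lying in $\mathcal{S}(\omega)$: the branch point $p=0$ (arising from $n=0$) and the unique $\overline{p}$ from \eqref{singpoint}--\eqref{bodega} where $\log(p+\imath\omega\overline{n})+4\pi\beta_0=0$. All other branch points $-\imath\omega n$ (with $n\ne 0$) and all other zeros of $\log(p+\imath\omega n)+4\pi\beta_0$ lie outside $\mathcal{S}(\omega)$. I would then take $D(\omega)$ to be the connected component of $\mathcal{S}(\omega)$ minus the branch cuts emanating from these exterior singularities and minus a small disk around $p=0$, that contains $(\mathcal{S}(\omega)\cap\imath\R)\setminus\{0\}$; this set is open, connected and extends arbitrarily far into $\{\re(p)>0\}$. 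On $D(\omega)\setminus\{\overline{p}\}$, $p\mapsto\mathcal{L}(p)$ is analytic with values in the compact operators on $\ell^2(\Z)$, and $p\mapsto\widehat{g}(p)$ is analytic into $\ell^2(\Z)$.

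To apply the analytic Fredholm theorem it suffices to exhibit invertibility of $I-\mathcal{L}(p)$ at one point of $D(\omega)\setminus\{\overline{p}\}$. For $\re(p)$ sufficiently large, $\|\mathcal{L}(p)\|\leq 2\sup_n|c_n(p)|\to 0$, so $I-\mathcal{L}(p)$ is invertible by Neumann series. The theorem then provides a discrete set $\mathcal{P}(\omega)\subset D(\omega)\setminus\{\overline{p}\}$ and a meromorphic extension of $(I-\mathcal{L}(p))^{-1}$ to $D(\omega)\setminus\{\overline{p}\}$, with poles contained in $\mathcal{P}(\omega)$. Consequently $\widehat{q}(p):=(I-\mathcal{L}(p))^{-1}\widehat{g}(p)$ is the unique $\ell^2$-valued solution of \eqref{Poe} on $D(\omega)\setminus(\mathcal{P}(\omega)\cup\{\overline{p}\})$, analytic there; on $\{\re(p)>0\}$ it coincides, componentwise, with the translates of the scalar Laplace transform furnished by Proposition \ref{pro:q_halfplane}.

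The main obstacle is to show $\mathcal{P}(\omega)\cap\imath\R=\emptyset$, i.e.\ that $\ker(I-\mathcal{L}(\imath y))=\{0\}$ for every $y\in(0,\omega)\setminus\{\im(\overline{p})\}$. Any element of this kernel would be an $\ell^2$ sequence satisfying the three-term recurrence $v_n=c_n(\imath y)(v_{n+1}-v_{n-1})$; I would rule this out either by exploiting the explicit form of $c_n(\imath y)$ (since $\log(\imath s)+4\pi\beta_0$ for real $s\ne 0$ has a simple closed form) coupled with a two-sided Wronskian/Casoratian argument forcing $v\equiv 0$, or via a limiting absorption principle bounding $\|(I-\mathcal{L}(\epsilon+\imath y))^{-1}\|$ uniformly as $\epsilon\to 0^+$, locally uniformly in $y\in(0,\omega)\setminus\{\im(\overline{p})\}$. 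Once injectivity on $\imath\R\cap D(\omega)\setminus\{0,\overline{p}\}$ is secured, the inclusion $\mathcal{P}(\omega)\subset D(\omega)\setminus\imath\R$ is automatic.
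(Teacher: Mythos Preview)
Your scaffolding matches the paper's: compactness of $\mathcal{L}(p)$ via $c_n(p)\to 0$, analyticity of $p\mapsto\mathcal{L}(p)$ and $p\mapsto\widehat{g}(p)$ on a connected open $D(\omega)\setminus\{\overline{p}\}$, and the analytic Fredholm alternative. Your choice to verify invertibility at a point by a Neumann series for large $\re(p)$ is a legitimate variant (the paper instead verifies invertibility directly on the imaginary axis and then runs analytic Fredholm), and your identification of the only singular coefficients in $\mathcal{S}(\omega)$ as $p=0$ and $p=\overline{p}$ is correct.

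The genuine gap is exactly the step you label ``the main obstacle'': you do not actually prove $\ker(I-\mathcal{L}(\imath\xi))=\{0\}$ for $\xi\in(0,\omega)\setminus\{\im(\overline{p})\}$. A Casoratian argument only gives that the solution space of the recurrence is two-dimensional, not that no nontrivial $\ell^2$ element exists; and a limiting absorption bound would itself require knowing this injectivity. The paper's mechanism is a concrete reality trick hinging on the precise value of $\beta_0$: for $p=\imath\xi$ and $n\geq 0$ one has $\xi+\omega n>0$, and since $4\pi\beta_0=2\gamma-2\log 2-\tfrac{\imath\pi}{2}$,
\[
\log(p+\imath\omega n)+4\pi\beta_0=\log(\xi+\omega n)-2\log 2+2\gamma\in\R,
\]
whereas for $n<0$ one has $\xi+\omega n<0$ and the quantity acquires imaginary part $-\pi$. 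Multiplying the homogeneous recurrence by $\log(p+\imath\omega n)+4\pi\beta_0$ and taking the $\ell^2(\Z)$-inner product with $(v_n)_n$, the right-hand side becomes $-2\pi\imath\alpha_0\sum_n\big(v_{n+1}\overline{v_n}-v_{n-1}\overline{v_n}\big)=4\pi\alpha_0\sum_n\im(v_{n+1}\overline{v_n})\in\R$. The imaginary part of the left-hand side must therefore vanish, forcing $v_n=0$ for all $n<0$; feeding this into the recurrence written as $v_{n+1}=-(\alpha_0 h(p+\imath\omega n))^{-1}v_n+v_{n-1}$ then kills all $v_n$. This is the missing ingredient in your proposal.
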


\begin{proof}
Preliminarily, observe that by \eqref{operator} $p\mapsto\mathcal{L}(p)$ is an analytic operator valued function in $D(\omega)\setminus\{\widetilde{p}\}$, with $D(\omega)$ a suitable open neighborhood of $(\mathcal{S}(\omega)\cap\imath\R)\setminus\{0\}$ contained in $\mathcal{S}(\omega)$. Note that $D(\omega)$ cannot contain $p=0$.

Then, as a first step, we show that, for any $p\in D(\omega)\setminus\{\widetilde{p}\}$, $\mathcal{L}(p)$ is a compact operator on $\ell_2(\Z)$. Clearly, it is a linear and bounded operator, being the composition of right and left shifts with the multiplication operator $B(p):\ell_2(\Z)\to\ell_2(\Z)$ associated with the sequence 
\[
B_n(p):=-\frac{2\pi\imath \alpha_0}{\log (p+\imath \omega n)+4\pi\,\beta_{0}}\,,\qquad n\in\Z\,,
\]
in such a way that
\[
(B(p)\widehat{q}\,)_n(p):=b_n(p)\widehat{q}_n(p)\,,\qquad n\in\Z\,.
\]
In addition, the compactness of $\mathcal{L}(p)$ immediately follows from the compactness of $B_p$. To this aim, we first see that $B(p)$ is the norm limit of a sequence of finite rank operators, due to the fact that $\lim_{n\to\infty}B_{n}(p)=0$. Indeed, this limit entails that
\[
\lim_{N\to\infty}\Vert B^N(p)-B(p)\Vert_{\ell_2(\Z)\to\ell_2(\Z)}=0\,,
\]
where $B^N(p)$ is the operator defined by
\[
(B^N(p)a)_n:=B_n(p)a_n,\quad\mbox{for $|n|\leq N$},\qquad (B^N(p)a)_n:=0,\quad \mbox{for $|n|>N$},
\]
for every $a=(a_n)_n\in\ell_2(\Z)$. Hence, from \cite[Theorems VI.12 and VI.13]{RSI}, $B(p)$ is compact.

In view of the previous remarks, in order to get the claim it suffices to apply the analytic Fredholm alternative to $\mathcal{L}(p)$ (see, e.g., \cite[Theorem VI.14]{RSI}). To this aim it suffices to show that the homogeneous equation associated with \eqref{Poe} has the sole trivial solution on $\ell_2(\Z)$, for every $p\in(\mathcal{S}(\omega)\cap\imath\R)\backslash\{0, \widetilde{p}\}$.

Therefore, fix $p\in(\mathcal{S}(\omega)\cap\imath\R)\backslash\{0, \widetilde{p}\}$, and set $\xi=\im(p)$. From \eqref{suspiria}, the homogeneous equation associated with \eqref{Poe} in components reads
\begin{equation}\label{eq:homdiscreteeq}
\widehat{q}_{n}(p)=-\frac{2\pi\imath \alpha_0}{\log (p+\imath \omega n)+4\pi\,\beta_{0}}[\widehat{q}_{n+1}(p)-\widehat{q}_{n-1}(p) ]\,,\qquad n\in\Z\,.
\end{equation}
Observe that $\log (p+\imath \omega n)=\log(\xi+\omega n)+\imath \frac{\pi}{2}$, and since $\beta_0:=-\frac{\log2}{2\pi}+\frac{\gamma}{2\pi}-\frac{\imath}{8}$, we get
\begin{equation}\label{eq:rewritelog}
\log (p+\imath \omega n)+4\pi\,\beta_{0}=\log(\xi+\omega n)-2\log2+2\gamma\,,
\end{equation}
so that $\log (p+\imath \omega n)\in\R$ for $n\in\mathbb{N}$.

Multiplying both sides of \eqref{eq:homdiscreteeq} by \eqref{eq:rewritelog} and taking the $\ell_2(\Z)$-product with $(\widehat{q}_n(p))_n$ we obtain
\begin{equation}\label{eq:series}
\sum_{n\in\Z}[\log(\xi+\omega n)-2\log2+2\gamma]\vert \widehat{q}_n(p)\vert^2=-2\pi\imath \alpha_0\sum_{n\in\Z}[\widehat{q}_{n+1}(p)\widehat{q}_{n}^*(p)-\widehat{q}_{n-1}(p)\widehat{q}_{n}^*(p)].
\end{equation}
Now, as
\[
 -2\pi\imath \alpha_0\sum_{n\in\Z}[\widehat{q}_{n+1}(p)\widehat{q}_{n}^*(p)-\widehat{q}_{n-1}(p)\widehat{q}_{n}^*(p)]=4\pi\alpha_0\sum_{n\in\Z}\im(\widehat{q}_{n+1}(p)\widehat{q}_{n}^*(p)),
\]
the r.h.s. of \eqref{eq:series} is real. Hence, l.h.s. must be real as well, but this may occur only if 
\begin{equation}
\label{eq-qneg}
\widehat{q}_{n}(p)=0\,,\qquad\forall n<0.
\end{equation}
However, as by \eqref{eq:homdiscreteeq}
\[
\widehat{q}_{n+1}(p)=-\frac{\log (p+\imath \omega n)+4\pi\,\beta_{0}}{2\pi\imath \alpha_0}\widehat{q}_{n}(p)+\widehat{q}_{n-1}(p)\,,\qquad \forall n\in\Z,
\]
\eqref{eq-qneg} actually imply that
\[
\widehat{q}_{n}(p)=0\,,\qquad\forall n\in\Z.
\]
We have thus proved that the homogeneous equation associated with \eqref{Poe} only admits the trivial solution on $(\mathcal{S}(\omega)\cap\imath\R)\backslash\{0, \widetilde{p}\}$. It is analytic there and we can apply the standard Fredholm alternative to ensures that $I-\mathcal{L}(p)$ is invertible for every $p\in(\mathcal{S}(\omega)\cap\imath\R)\backslash\{0, \widetilde{p}\}$, and then the analytic Fredholm to ensure that $(I-\mathcal{L}(p))^{-1}$ is analytic. Thus, as $\widehat{g}$ is analytic as well there, the proof is complete.
 \end{proof}

Before addressing subcase (ii), it is necessary to state following preliminary result.

\begin{lem}
\label{inferno}
Let $(\widehat{\xi}'_n(p))_n$ be a sequence such that
\[
\widehat{\xi}_{n}(p):=\frac{\widehat{\xi}_{n}'(p)}{\log (p+\imath \omega n)+4\pi\,\beta_{0}}
\]
belongs to $\ell_{2}(\mathbb{Z}\backslash\{\widetilde{n}\})$. If $\widetilde{p}\neq0$, then the system of equations 
\begin{equation}
\label{eqsing}
\begin{split}
\widehat{r}_{n}(p)&=-\frac{2\pi\imath \alpha_0}{\log (p+\imath \omega n)+4\pi\,\beta_{0}}\bigg\{\widehat{r}_{n+1}(p)-\widehat{r}_{n-1}(p)+\widehat{\xi}_{n}(p)\bigg\}\,,\quad\mbox{for $n\neq\widetilde{n},\widetilde{n}\pm1$}\,, \\
\widehat{r}_{\widetilde{n}+1}(p)&=-\frac{2\pi\imath \alpha_0}{\log (p+\imath \omega (\widetilde{n}+1))+4\pi\,\beta_{0}}\bigg\{\widehat{r}_{\widetilde{n}+2}(p)+\widehat{\xi}_{\widetilde{n}+1}(p)\bigg\}\,,\\
\widehat{r}_{\widetilde{n}-1}(p)&=-\frac{2\pi\imath \alpha_0}{\log (p+\imath \omega (\widetilde{n}-1))+4\pi\,\beta_{0}}\bigg\{-\widehat{r}_{\widetilde{n}-2}(p)+\widehat{\xi}_{\widetilde{n}-1}(p)\bigg\}\,,
\end{split}
\end{equation}
 has a unique solution $(\widehat{r}_{n}(p))_n\in \ell_{2}(\mathbb{Z}\backslash\{\widetilde{n}\})$ in a neighborhood of $\widetilde{p}$ up to a discrete set of points which are not contained in the imaginary axis. Moreover, if $\widehat{\xi}_{n}'$ is analytic in that set, then the same holds for the functions $\widehat{r}_{n}$.
\end{lem}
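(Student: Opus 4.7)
The plan is to recast \eqref{eqsing} as a Fredholm problem on $\ell_{2}(\Z\setminus\{\overline{n}\})$ and apply the analytic Fredholm alternative, exactly in the spirit of Proposition \ref{profondo rosso}. The crucial observation is that, by dropping the index $n=\overline{n}$, the coefficient $1/(\log(p+\imath\omega\overline{n})+4\pi\beta_{0})$, which is the sole one that becomes singular at $p=\overline{p}$, is removed from the system, so all remaining coefficients are analytic in $p$ on a full neighborhood of $\overline{p}$.

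First I would rewrite \eqref{eqsing} as a fixed-point equation $(I-\mathcal{L}'(p))\widehat{r}(p)=\widehat{g}'(p)$ on $\ell_{2}(\Z\setminus\{\overline{n}\})$, where $\mathcal{L}'(p)$ collects the homogeneous right-hand sides and $\widehat{g}'(p)$ encodes the forcing terms built from $(\widehat{\xi}_n)_{n\neq\overline{n}}$. Arguing as in Proposition \ref{profondo rosso}, I would verify that $p\mapsto\mathcal{L}'(p)$ is analytic and compact-valued on the chosen neighborhood: compactness follows by finite-rank approximation, since the multiplication coefficients $-2\pi\imath\alpha_{0}/(\log(p+\imath\omega n)+4\pi\beta_{0})$ tend to zero as $|n|\to\infty$. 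The analytic Fredholm theorem \cite[Theorem VI.14]{RSI} then yields existence, uniqueness and analyticity of $\widehat{r}(p)$ outside a discrete set $\mathcal{P}'(\omega)$ on which $(I-\mathcal{L}'(p))^{-1}$ is only meromorphic, and analyticity of each $\widehat{r}_{n}$ is inherited from that of $\widehat{g}'(p)$ whenever the $\widehat{\xi}_{n}'$ are analytic.

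To exclude that $\mathcal{P}'(\omega)$ meets the imaginary axis, I would show that for $p=\imath\xi$ in the neighborhood with $p\neq\overline{p},0$ the homogeneous equation $(I-\mathcal{L}'(p))\widetilde{r}=0$ has only the trivial $\ell_{2}$-solution. Given such a putative kernel element, I extend it by $\widetilde{r}_{\overline{n}}:=0$: the modified equations at $n=\overline{n}\pm 1$ then coincide with the generic ones of Proposition \ref{profondo rosso} under the substitution $\widetilde{r}_{\overline{n}}=0$, so the extended sequence satisfies \eqref{eq:homdiscreteeq} at every $n\neq\overline{n}$. Multiplying each such equation by $\log(p+\imath\omega n)+4\pi\beta_{0}$, pairing with $\widetilde{r}_{n}^{*}$ and summing over $n\neq\overline{n}$, the missing $n=\overline{n}$ contribution is trivially zero, so the right-hand sum equals $\sum_{n\in\Z}(\widetilde{r}_{n+1}-\widetilde{r}_{n-1})\widetilde{r}_{n}^{*}$, which is purely imaginary by the shift argument of Proposition \ref{profondo rosso}. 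Taking imaginary parts and using \eqref{eq:rewritelog} one obtains $\widetilde{r}_{n}=0$ for every $n$ with $\im(p)+\omega n<0$; the three-term recurrence valid at every $n\neq\overline{n}$ then propagates this vanishing leftward up to $n=\overline{n}-1$.

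The main obstacle is extending the vanishing to the right half-line $n>\overline{n}$, as the missing equation at $n=\overline{n}$ prevents the forward recursion from crossing the gap and the modified equation at $\overline{n}+1$ only provides a one-parameter family of candidate right-half sequences parametrised by $\widetilde{r}_{\overline{n}+1}$. I would close this through a WKB-type analysis of the three-term recurrence as $n\to+\infty$: since $\log(p+\imath\omega n)+4\pi\beta_{0}$ grows logarithmically, its two fundamental modes are respectively super-exponentially growing and super-exponentially decaying, so the $\ell_{2}$-constraint selects a one-dimensional recessive subspace. Matching it with the boundary relation $\widetilde{r}_{\overline{n}+1}=-\frac{2\pi\imath\alpha_{0}}{\log(p+\imath\omega(\overline{n}+1))+4\pi\beta_{0}}\widetilde{r}_{\overline{n}+2}$ coming from the modified equation at $\overline{n}+1$, a nontrivial $\ell_{2}$-solution can arise only at a discrete set of values of $p$ which, by a direct check analogous to the imaginary-part argument above, does not intersect $\imath\R$. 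Propagation then yields $\widetilde{r}_{n}=0$ for all $n>\overline{n}$, completing the proof.
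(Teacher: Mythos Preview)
Your proposal follows the same route as the paper: recast \eqref{eqsing} as $(\ID-\mathcal{L}_1(p))\widehat{r}=\widehat{\xi}$ on $\ell_2(\Z\setminus\{\overline{n}\})$, establish compactness by finite--rank approximation, invoke the analytic Fredholm alternative, and rule out kernels on the imaginary axis via the imaginary--part identity \eqref{eq-rn}. Your treatment of the left block $n\le\overline{n}-1$ (including the clean device of extending by $\widetilde r_{\overline{n}}:=0$) matches the paper exactly.

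The divergence is in the right block $n\ge\overline{n}+1$. You propose a WKB/Birkhoff analysis of the three--term recurrence to isolate the recessive mode and then a matching condition, closing with a ``direct check analogous to the imaginary--part argument'' to exclude imaginary $p$. That last step is where a gap appears: for $p\in\imath\R$ near $\overline{p}$ and $n\ge\overline{n}+1$ the coefficients $\log(p+\imath\omega n)+4\pi\beta_0$ are \emph{real} (this is precisely \eqref{eq:rewritelog}), so the pairing identity produces a real number equal to a real number and yields no constraint. Hence the analogy with the left--half argument does not go through as stated, and your WKB machinery is left without the final exclusion of $\imath\R$.

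The paper avoids this detour entirely. Once the left block is shown to vanish, it argues directly that if $\widehat r_{\overline{n}+1}(p)\neq0$ then the forward sequence determined by the modified equation at $\overline{n}+1$ and the recurrence for $n\ge\overline{n}+2$ cannot lie in $\ell_2(\Z\setminus\{\overline{n}\})$, because $\log(\xi+\omega n)\to+\infty$ forces growth; the recursive structure then gives $\widehat r_n=0$ for all $n\ge\overline{n}+1$. In short: your framework is correct, but replace the WKB/matching step by this simpler growth argument and the proof closes.
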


\begin{proof}
One can argue exactly as for Proposition \ref{profondo rosso}. Indeed, equation \eqref{eqsing} can be rewritten as
\[
\widehat{r}(p)=\mathcal{L}_1(p)\,\widehat{r}(p)+\widehat{\xi}(p)
\]
where $(\widehat{\xi}_n(p))_n\in \ell_{2}(\mathbb{Z}\backslash\{\widetilde{n}\})$ and $\mathcal{L}_1(p):\ell_{2}(\mathbb{Z}\backslash\{\widetilde{n}\})\to\ell_{2}(\mathbb{Z}\backslash\{\widetilde{n}\})$ is a compact operator. Then, in order to use again the analytic Fredholm theorem, one can show that, whenever $p$ belongs to a suitable imaginary neighborhood of $\widetilde{p}\neq0$, there is no non-trivial solution of the homogeneous equation associated with \eqref{eqsing}.

More precisely, multiplying the homogeneous equation associated with \eqref{eqsing} by $\log(\xi+\omega n)-2\log 2 + 2\gamma$ (where $\xi=\im(p)$), using \eqref{eq:rewritelog} and taking the $\ell_2(\Z\setminus\{\widetilde{n}\})$-product with $(\widehat{r}_n(p))_n$, there results
\begin{multline}
\label{eq-rn}
\sum_{n\in\Z\setminus\{\widetilde{n}\}}[\log(\xi+\omega n)-2\log2+2\gamma]\vert \widehat{r}_n(p)\vert^2\\
=-2\pi\imath \alpha_0\bigg(\sum_{n\in\Z\setminus\{\widetilde{n},\widetilde{n}\pm1\}}[\widehat{r}_{n+1}(p)\widehat{r}_{n}^*(p)-\widehat{r}_{n-1}(p)\widehat{r}_{n}^*(p)]+\widehat{r}_{\widetilde{n}+2}(p)\widehat{r}_{\widetilde{n}+1}^*(p)-\widehat{r}_{\widetilde{n}-2}(p)\widehat{r}_{\widetilde{n}-1}^*(p)\bigg)\\[.2cm]
=4\pi\alpha_0\bigg(\sum_{n\in\Z\setminus(-\infty,\widetilde{n}]}\im\big(\widehat{r}_{n+1}(p)\widehat{r}_{n}^*(p)\big)+\sum_{n\in\Z\setminus[\widetilde{n},+\infty)}\im\big(\widehat{r}_{n-1}(p)\widehat{r}_{n}^*(p)\big)\bigg).
\end{multline}
As a consequence $\widehat{r}_n(p)=0$, for every $n<0$. Moreover, exploiting again the homogeneous equation associated with \eqref{eqsing}, one can see that this extends to every $n\leq\widetilde{n}-1$.

It is then left to discuss the case $n\geq\widetilde{n}+1$. However, one can see that if $\widehat{r}_{\widetilde{n}+1}(p)\neq0$, then the sequence $(\widehat{r}_n(p))_n\not\in\ell_2(\Z\setminus\{\widetilde{n}\})$, arguing as follows. Assume 
\[
\widehat{r}_{\widetilde{n}+1}(p)\neq0\,,
\]
and consider the homogeneous version of \eqref{eqsing}. Then by the second equation in \eqref{eqsing} we find
\[
\widehat{r}_{\widetilde n+2}(p)=-\frac{\log (p+\imath \omega (\widetilde n+1))+4\pi\,\beta_{0}}{2\pi\imath\alpha_0}\widehat r_{\widetilde n+1}(p)\,,
\]
which, combined with the first equation in \eqref{eqsing}, gives
\[
\widehat r_{\widetilde n+3}(p)=\left[ \frac{(\log(p+\imath\omega(\widetilde n+1))+4\pi\beta_0)(\log(p+\imath\omega(\widetilde n+2))+4\pi\beta_0)}{(2\pi\imath\alpha_0)^2}\right]\widehat r_{\widetilde n+1}(p)\,.
\]
Iterating this procedure one finds an expression of the form
\[
\widehat r_{\widetilde n+ k}= \Lambda(p,\alpha, k)\, \widehat r_{\widetilde n+1}(p)\,,\qquad \forall k\in\N\,,
\]
where
\[
\lim_k \vert \Lambda(p,\alpha_0,k)\vert=+\infty\,,
\]
thus contradicting the fact that $(\widehat{r}_n(p))_n\in\ell_2(\Z\setminus\{\widetilde{n}\})$. Thus
\[
 \widehat r_{\widetilde n+1}(p)=0
\]
and the recursive structure of the homogeneous equation associated with \eqref{eqsing} implies that $\widehat{r}_n(p)=0$, for every $n\geq\widetilde{n}+1$, thus proving the claim.
\end{proof}

\begin{pro}[Step (ii)]
\label{tenebre}
There exists a unique solution $(\widehat{q}_{n}(p))_n\in \ell_{2}(\mathbb{Z})$ of \eqref{Poe} for every $p\in D(\omega)\setminus\mathcal{P}(\omega)$, where $D(\omega)$ is a suitable open and connected set such that $(\mathcal{S}(\omega)\cap\imath\R)\setminus\{0\}\subset D(\omega)\subset\mathcal{S}(\omega)$ and $\mathcal{P}(\omega)$ is a discrete set of points in $D(\omega)$ not intersecting the imaginary axis. In addition, the functions $\widehat{q}_n$ are analytic on $D(\omega)\setminus\mathcal{P}(\omega)$.
\end{pro}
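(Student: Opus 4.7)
The plan is to extend Proposition \ref{profondo rosso} across the exceptional point $\overline{p}$ locally: I would fix a small open neighborhood $V$ of $\overline{p}$ inside $D(\omega)$ and construct there an analytic $\ell_{2}(\mathbb{Z})$-valued solution of \eqref{Poe}. Uniqueness and coincidence with the Step (i) solution on $V\setminus\{\overline{p}\}$ are then automatic by analytic continuation, so it suffices to close the scheme at $\overline{p}$ itself.

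The main tool is Lemma \ref{inferno}. I would treat $c(p):=\widehat{q}_{\overline{n}}(p)$ as an auxiliary scalar unknown and recast the rows $n\neq\overline{n}$ of \eqref{Poe} into the format \eqref{eqsing} by setting
\[
\widehat{\xi}_{n}'(p) := [\log(p+\imath\omega n)+4\pi\beta_{0}]\widehat{g}_{n}(p) \mp 2\pi\imath\alpha_{0}\,c(p)\,\one_{\{n=\overline{n}\pm 1\}}.
\]
The lemma yields a unique sequence $(\widehat{r}_{n}(p))_{n\neq\overline{n}}\in\ell_{2}(\mathbb{Z}\setminus\{\overline{n}\})$, analytic on $V$ outside a discrete set $\mathcal{P}_{V}\subset V\setminus\imath\R$, and affine in $c(p)$; in particular $\widehat{r}_{\overline{n}+1}(p)-\widehat{r}_{\overline{n}-1}(p)=A(p)c(p)+B(p)$ with $A,B$ analytic on $V\setminus\mathcal{P}_{V}$. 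Multiplying the remaining $\overline{n}$-th row of \eqref{Poe} by $M(p):=\log(p+\imath\omega\overline{n})+4\pi\beta_{0}$, which vanishes simply at $\overline{p}$, I obtain the scalar closure relation
\[
\bigl[M(p)+2\pi\imath\alpha_{0}A(p)\bigr]\,c(p) = M(p)\widehat{g}_{\overline{n}}(p) - 2\pi\imath\alpha_{0}B(p).
\]
A short Taylor expansion, based on $\log(\overline{p}+\imath\omega\overline{n})=2(\log 2-\gamma)+\imath\pi/2$ and $\log(-\lambda_{\alpha})=2(\log 2-\gamma)$, shows that $M(p)\widehat{g}_{\overline{n}}(p)$ extends analytically across $\overline{p}$, so the right-hand side is analytic on $V$.

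The main obstacle is to rule out that the bracket on the left-hand side vanishes at $\overline{p}$: since $M(\overline{p})=0$, this amounts to showing $A(\overline{p})\neq 0$, equivalently that the $\ell_{2}(\mathbb{Z})$ kernel of the regularized homogeneous system at $\overline{p}$ is trivial. I would prove this by adapting the energy identity used in the proof of Proposition \ref{profondo rosso}: after multiplying each row by $\log(\overline{p}+\imath\omega n)+4\pi\beta_{0}$ (which vanishes precisely for $n=\overline{n}$, so the delicate $\overline{n}$-term drops out) and testing against $\widehat{q}_{n}^{*}$, the resulting identity combined with the recursion forces $\widehat{q}\equiv 0$, handling separately the finitely many indices $n$ with $\im(\overline{p})+\omega n<0$ for which the log contributes an extra imaginary piece. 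With non-degeneracy secured, the zero set of $M+2\pi\imath\alpha_{0}A$ is discrete in $V$ and, by Step (i), disjoint from $\imath\R$; enlarging $\mathcal{P}(\omega)$ by it, dividing through the scalar relation, and matching with the Step (i) solution in the punctured neighborhood forces $c$ to extend analytically through $\overline{p}$. Substituting back into the expressions for the $\widehat{r}_{n}$ yields the desired analytic extension of the full vector $\widehat{q}$ to $D(\omega)\setminus\mathcal{P}(\omega)$, and analyticity on $(\mathcal{S}(\omega)\cap\imath\R)\setminus\{0\}$ follows from $\mathcal{P}(\omega)\cap\imath\R=\emptyset$.
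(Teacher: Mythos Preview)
Your approach is essentially the same as the paper's: isolate the scalar unknown $c(p)=\widehat{q}_{\overline{n}}(p)$, solve the remaining rows via Lemma \ref{inferno} to obtain an affine dependence on $c$, reduce to a scalar closure relation, and prove non-degeneracy of the leading coefficient at $\overline{p}$ through the energy identity. The paper makes the affine splitting explicit by introducing two auxiliary sequences $\widehat{t}_{n}$ (the coefficient of $c$) and $\widehat{\rho}_{n}$ (the $c$-independent part), so that your $A(p)$ is precisely $\widehat{t}_{\overline{n}+1}(p)-\widehat{t}_{\overline{n}-1}(p)$; the non-degeneracy $A(\overline{p})\neq 0$ is then obtained by contradiction, the inhomogeneous ``$\pm 1$'' forcing in the $\widehat{t}$-system being incompatible with the vanishing that the energy identity would otherwise impose. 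Your reformulation as triviality of the kernel of the regularized homogeneous system is equivalent and arguably cleaner, since the extra row $\widehat{q}_{\overline{n}+1}=\widehat{q}_{\overline{n}-1}$ lets the recursion close directly without invoking the forcing term. One small slip: the set of indices with $\im(\overline{p})+\omega n<0$ is all $n\leq -1$, hence infinite rather than finite; this does not affect the argument, as those are exactly the terms contributing a nonzero imaginary part to the energy identity, forcing $\widehat{q}_{n}=0$ there.
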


\begin{proof}
Proposition \ref{profondo rosso} actually proves the claim except for the point $\widetilde{p}$. Hence, in the following we only focus on what happens in a neighborhood of $\widetilde{p}$ whenever $\widetilde{p}\neq0$.

If $e^{2(\log2-\gamma)}=N\,\omega$ for some $N\in\mathbb{N}$ is satisfied, then the claim is trivial since $\widetilde{p}=0$, which is out of $D(\omega)$ (how highlighted in the proof of Proposition \ref{profondo rosso}). On the contrary, if $e^{2(\log2-\gamma)}\neq N\,\omega$ for every $N\in\mathbb{N}$, then $\widetilde{p}\neq0$ and the proof requires some further effort. In particular, we have to prove that each $\widehat{q}_n$ can be extended to $\widetilde{p}$ and is analytic here.

The strategy of the proof consists of discussing separately the terms $\widehat{q}_n$, with $n\neq\widetilde{n}$, and $\widehat{q}_{\widetilde{n}}$. Let $\widehat{t}_n(p)$ be the \emph{non-trivial} solution in $\ell_2(\Z\setminus\{\widetilde{n}\})$ provided by Lemma \ref{inferno} of the system
\begin{equation}
\label{eqs}
\begin{split}
\widehat{t}_{n}(p)&=-\frac{2\pi\imath \alpha_0}{\log (p+\imath \omega n)+4\pi\,\beta_{0}}\bigg\{\widehat{t}_{n+1}(p)-\widehat{t}_{n-1}(p)\bigg\}\,,\quad\mbox{for $n\neq\widetilde{n},\widetilde{n}\pm1$}\,, \\
\widehat{t}_{\widetilde{n}+1}(p)&=-\frac{2\pi\imath \alpha_0}{\log (p+\imath \omega (\widetilde{n}+1))+4\pi\,\beta_{0}}\bigg\{\widehat{t}_{\widetilde{n}+2}(p)-1\bigg\}\,,\\
\widehat{t}_{\widetilde{n}-1}(p)&=-\frac{2\pi\imath \alpha_0}{\log (p+\imath \omega (\widetilde{n}-1))+4\pi\,\beta_{0}}\bigg\{-\widehat{t}_{\widetilde{n}-2}(p)+1\bigg\}\,,
\end{split}
\end{equation}
Set also
\begin{equation}
\label{eq:qdecomp}
\widehat{\rho}_n(p):=\widehat{q}_n(p)-\widehat{t}_n(p)\widehat{q}_{\widetilde{n}}(p),\qquad\forall n\in\Z\,, n\neq\widetilde{n}.
\end{equation}
Plugging into \eqref{suspiria} (and in view of \eqref{operator}), there results that, for every $n\neq\widetilde{n}$,
\begin{multline*}
\widehat{\rho}_{n}(p)+\widehat{t}_{n}(p)\,\widehat{q}_{\widetilde{n}}(p)=\widehat{g}_n(p)+\\
-\frac{2\pi\imath \alpha_0}{\log (p+\imath \omega n)+4\pi\,\beta_{0}}\bigg\{\widehat{\rho}_{n+1}(p)+\widehat{t}_{n+1}(p)\,\widehat{q}_{\widetilde{n}}(p)-\widehat{\rho}_{n-1}(p)-\widehat{t}_{n-1}(p)\,\widehat{q}_{\widetilde{n}}(p)\bigg\},
\end{multline*}
As a consequence, combining with \eqref{eqs} and \eqref{eq:qdecomp} (for $n=\widetilde{n}$), $(\widehat{\rho}_n(p))_n$ must satisfy
\begin{equation}
\label{eqsb}
\widehat{\rho}_{n}(p)=-\frac{2\pi\imath \alpha_0}{\log (p+\imath \omega n)+4\pi\,\beta_{0}}\bigg\{ \widehat{\rho}_{n+1}(p)-\widehat{\rho}_{n-1}(p)\bigg\}+\widehat{g}_n(p)\,,\quad\mbox{for $n\neq,\widetilde{n},\,\widetilde{n}\pm1$,}
\end{equation}
and
\begin{equation}
\label{eqsb2}
\widehat{\rho}_{\widetilde{n}\pm1}(p)=\mp\frac{2\pi\imath \alpha_0}{\log (p+\imath \omega(\widetilde{n}\pm1))+4\pi\,\beta_{0}}\widehat{\rho}_{\widetilde{n}\pm2}(p)+\widehat{g}_{\widetilde{n}\pm1}(p).
\end{equation}
Now, one can easily check that the argument used in the proof of Lemma \ref{inferno} applies to \eqref{eqsb}-\eqref{eqsb2} as well (since $\overline{p}\in\imath\R$). Thus, there exists a unique solution $(\widehat{\rho}_{n}(p))_n\in\ell_2(\Z\setminus\{\widetilde{n}\})$ in a neighborhood of $\widetilde{p}$ up to a discrete set of points which are not contained in the imaginary axis and all the $\widehat{\rho}_{n}$ are analytic here.

Combining the previous remarks with Lemma \ref{inferno} and \eqref{eq:qdecomp}, one clearly sees that it is left to prove that $\widehat{q}_{\widetilde{n}}$ exists and is analytic in a neighborhood of $\widetilde{p}$. To this aim, we first note that $\widehat{q}_{\widetilde{n}}(p)$ has to solve the following equation
\[
\widehat{q}_{\widetilde{n}}(p)+\frac{2\pi\imath \alpha_0}{\log (p+\imath \omega \widetilde{n})+4\pi\,\beta_{0}} \bigg\{ \widehat{\rho}_{\widetilde{n}+1}(p)+\widehat{t}_{\widetilde{n}+1}(p)\widehat{q}_{\widetilde{n}}(p)- \widehat{\rho}_{\widetilde{n}-1}(p)-\widehat{t}_{\widetilde{n}-1}(p)\widehat{q}_{\widetilde{n}}(p) \bigg\}=\widehat{g}_{\widetilde{n}}(p),
\]
namely,
\begin{multline*}
\big[\log (p+\imath \omega \widetilde{n})+4\pi\beta_{0}+2\pi\imath \alpha_0(\widehat{t}_{\widetilde{n}+1}(p)-\widehat{t}_{\widetilde{n}-1}(p)\big]\widehat{q}_{\widetilde{n}}(p)\\
=-2\pi\imath \alpha_0(\widehat{\rho}_{\widetilde{n}+1}(p)-\widehat{\rho}_{\widetilde{n}-1}(p))+\left(\log (p+\imath \omega \widetilde{n})+4\pi\beta_{0}\right)\widehat{g}_n(p).
\end{multline*}
Note that the r.h.s. of the above equation is analytic in a neighborhood of $\widetilde{p}$, as well as $\widehat{\rho}_{\widetilde{n}+1},\,\widehat{\rho}_{\widetilde{n}-1},\,\widehat{t}_{\widetilde{n}+1},\,\widehat{t}_{\widetilde{n}-1}$. Moreover, by definition, 
\[
\log (\widetilde{p}+\imath \omega \widetilde{n})+4\pi\beta_{0}=0\,,
\]
 so that in order to get existence and analyticity of $\widehat{q}_{\widetilde{n}}$ it is sufficient to prove that 
\begin{equation}
 \label{eq-ipabs}
\widehat{t}_{\widetilde{n}+1}(\widetilde{p})\neq\widehat{t}_{\widetilde{n}-1}(\widetilde{p})\,.
\end{equation}
Therefore, assume by contradiction that $\widehat{t}_{\widetilde{n}+1}(\widetilde{p})=\widehat{t}_{\widetilde{n}-1}(\widetilde{p})$. Thus, multiplying \eqref{eqs} evaluated at $\widetilde{p}$ by $(\log (\widetilde{p}+\imath \omega n)+4\pi\,\beta_{0})$, taking the $\ell_2(\Z\setminus\{\widetilde{n}\})$-product with $(\widehat{t}_n(\widetilde{p}))_n$ and arguing as in \eqref{eq-rn}, there results
\begin{multline}
\label{eqsi}
\sum_{n\in\Z\setminus\{\widetilde{n}\}}(\log (\widetilde{p}+\imath \omega n)+4\pi\,\beta_{0})|\widehat{t}_{n}(\widetilde{p})|^{2}=2\pi\imath \alpha_0\underbrace{\big(\widehat{t}_{\widetilde{n}+1}(\widetilde{p})-\widehat{t}_{\widetilde{n}-1}(\widetilde{p})\big)}_{=0}+\\[.2cm]
4\pi\alpha_0\bigg(\sum_{n\in\Z\setminus(-\infty,\widetilde{n}-1]}\im\big(\widehat{t}_{n+1}(p)\widehat{t}_{n}^*(p)\big)+\sum_{n\in\Z\setminus[\widetilde{n}+1,+\infty)}\im\big(\widehat{t}_{n-1}(p)\widehat{t}_{n}^*(p)\big)\bigg)
\end{multline}
Clearly, as the last term at the right hand side is real, since by \eqref{beta0}
\[
\log (\widetilde{p}+\imath \omega n)+4\pi\,\beta_{0}=\log (\omega n-\imath\widetilde{p} )+2(\gamma-\log2)
\]
and since $\widetilde{p}$ is purely imaginary, one obtains that $\widehat{t}_n(\widetilde{p})=0$ for every $n<\frac{\imath\widetilde{p}}{\omega}$. Furthermore, as $\im(\widetilde{p})\in(0,\omega)$, $\frac{\imath\widetilde{p}}{\omega}\in(-1,0)$, so that $\widehat{t}_n(\widetilde{p})=0$ for all $n<0$. However, by the first line of \eqref{eqs} (used for every $n\leq\widetilde{n}-2$), this actually entails that $\widehat{t}_n(\widetilde{p})=0$ for all $n\leq\widetilde{n}-1$, but this contradicts the third line of \eqref{eqs} since $\alpha_0\neq0$. As a consequence \eqref{eq-ipabs} is true, which completes the proof.
\end{proof}

\begin{rem}\label{rmk:ontheaxis}
 Combining Propositions \ref{pro:q_halfplane}, \ref{profondo rosso} and \ref{tenebre} one obtains that the functions $\widehat{q}_n$ are analytic up to the imaginary axis, except possibly at $p=0$, so that the possible singularities given by the set $\mathcal{P}(\omega)$ must belong to the left open half-plane $\{p\in\C\,:\,\re(p)<0\}$. In addition, the analytic Fredholm alternative \cite[Theorem VI.14]{RSI} guarantee that they must be poles.
 \end{rem}
 

\subsubsection{The case $p=0$.}
\label{subsubsec:pzero}

All the results obtained up to this point do not require any further assumption on the frequency $\omega$ of $\alpha(t)$. On the contrary, at this point we have to use the so-called \emph{no-resonance} condition \eqref{eq:nonres}, that is
\[
N\omega\neq e^{2(\log2-\gamma)},\qquad\forall N\in\mathbb{N}.
\]
Recall that, as explained before \eqref{bodega}, this entails that $\widetilde{p}\neq0$.
\begin{rem}
 It is worth recalling why we call \eqref{eq:nonres} no-resonance assumption. Indeed, as the monochromatic assumption entails $\alpha(0)=0$, $\lambda_0=-4e^{-2\gamma}$ and thus \eqref{eq:nonres} reads $N\omega\neq-\lambda_0$, for every $N\in\N$. In other words, \eqref{eq:nonres} implies that the frequency $\omega$ is not an integer divisor of the eigenvalue of $\hamz$.
\end{rem}

\begin{pro}
\label{nonhosonno}
If condition \eqref{eq:nonres} holds, then the solution of \eqref{Poe} is of the form
\begin{equation}
\label{ilgattoanovecode} 
 \left\{\begin{array}{l}
  \displaystyle \widehat{q}_{0}(p)=\frac{H(p)-4\pi\imath\widehat{Z}_2(p)}{Q(p)+\log p}, \\[.7cm]
  \displaystyle \widehat{q}_n(p)=\widehat{R}_n(p)+\widehat{\tau}_n(p)\,\widehat{q}_0(p),\qquad n\neq0,
 \end{array}\right.
\end{equation}
where all the functions $\widehat{R}_n$, $\widehat{\tau}_n$, $H$ and $Q$ are analytic at $p=0$.
\end{pro}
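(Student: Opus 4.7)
My plan is to adapt the decomposition strategy developed in the proof of Proposition \ref{tenebre} to the branch point $p=0$, with $\overline{n}=0$ now playing the role previously played by $\overline{n}$. The first observation is that the no-resonance assumption \eqref{eq:nonres}, via the discussion preceding \eqref{bodega}, ensures $\overline{p}\neq 0$, and in particular that for every $n\neq 0$ the coefficient $\log(p+\imath\omega n)+4\pi\beta_0$ is analytic and non-vanishing in a neighborhood $\mathcal{U}$ of $p=0$ in $\S(\omega)$. Hence the only row of \eqref{suspiria} which is singular at $p=0$ is the one with index $n=0$, and the goal is to extract that singularity into a scalar equation for $\widehat{q}_0$.

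To this end I would introduce the auxiliary sequence $(\widehat{\tau}_n(p))_{n\in\Z\setminus\{0\}}$ as the unique $\ell_{2}(\Z\setminus\{0\})$-solution of the homogeneous system obtained from \eqref{suspiria} by formally imposing $\widehat{\tau}_0\equiv 1$, so that the rows $n=\pm 1$ acquire a constant forcing; and then define $\widehat{R}_n(p):=\widehat{q}_n(p)-\widehat{\tau}_n(p)\widehat{q}_0(p)$ for $n\neq 0$. A direct check shows that subtracting $\widehat{q}_0$ times the $n$-th equation of the $\widehat{\tau}$-system from the $n$-th row of \eqref{suspiria} cancels all $\widehat{q}_0$-dependent contributions (including those at the boundary $n=\pm 1$), so that $(\widehat{R}_n)_{n\neq 0}$ satisfies a decoupled inhomogeneous system with the same operator as $\widehat{\tau}$ and forcing $(\widehat{g}_n)_{n\neq 0}$, which is analytic on $\mathcal{U}$. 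Both systems fit into the analytic-Fredholm framework of Lemma \ref{inferno}: they are of the form $x=\mathcal{L}_0(p)\,x+\widehat{h}(p)$ in $\ell_{2}(\Z\setminus\{0\})$, with $\mathcal{L}_0(p)$ compact and analytic on $\mathcal{U}$.

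The key step is to rule out non-trivial $\ell_{2}(\Z\setminus\{0\})$-solutions of the associated homogeneous system on $\mathcal{U}\cap \imath\R$. Multiplying by $\log(p+\imath\omega n)+4\pi\beta_0$, pairing with $\widehat{\tau}_n^*$ and telescoping exactly as in \eqref{eqsi} yields a right-hand side that is purely real (the boundary contributions at $n=\pm 1$ combining into two additional imaginary parts). For $p=\imath\xi$ with $\xi\in[0,\omega)$ small, the principal branch gives
\[
\im\bigl(\log(\imath(\xi+\omega n))+4\pi\beta_0\bigr)=\begin{cases}0,&\xi+\omega n>0,\\[.15cm] -\pi,&\xi+\omega n<0,\end{cases}
\]
so taking imaginary parts forces $\widehat{\tau}_n(p)=0$ for every $n\leq -1$. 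The main obstacle is that, unlike in Proposition \ref{tenebre}, this alone does \emph{not} trivialise the positive tail: the $n\leq-2$ equations become consistent with vanishing, but one still has to treat $n\geq 1$ separately. I would complement the reality argument with the forward three-term recursion
\[
\widehat{\tau}_{n+1}(p)=\widehat{\tau}_{n-1}(p)-\frac{\log(p+\imath\omega n)+4\pi\beta_0}{2\pi\imath\alpha_0}\,\widehat{\tau}_n(p),\qquad n\geq 2,
\]
coupled with $\widehat{\tau}_1=-\tfrac{2\pi\imath\alpha_0}{\log(p+\imath\omega)+4\pi\beta_0}\widehat{\tau}_2$ from the $n=1$ row: since $|\log(p+\imath\omega n)|\to+\infty$, any non-zero initial datum produces a sequence of super-geometric growth, contradicting summability. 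Hence $\widehat{\tau}_1=0$, and the whole sequence vanishes. Analytic Fredholm then yields analyticity of both $(\widehat{\tau}_n)$ and $(\widehat{R}_n)$ on $\mathcal{U}$, possibly after shrinking $\mathcal{U}$ to avoid the discrete set of exceptional poles lying off the imaginary axis (cf.~Remark \ref{rmk:ontheaxis}).

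Finally, plugging $\widehat{q}_{\pm 1}=\widehat{R}_{\pm 1}+\widehat{\tau}_{\pm 1}\widehat{q}_0$ into the $n=0$ row of \eqref{suspiria} multiplied throughout by $\log p+4\pi\beta_0$, and using the identity
\[
\mathcal{C}_\alpha\,\frac{\log p-\log(-\lambda_\alpha)-\imath\pi/2}{p+\imath\lambda_\alpha}=-4\pi\imath\,\widehat{Z}_2(p),
\]
which is immediate from \eqref{eq:laplaceZ2}, I would obtain
\[
\bigl[\log p+Q(p)\bigr]\widehat{q}_0(p)=H(p)-4\pi\imath\,\widehat{Z}_2(p),
\]
with $Q(p):=4\pi\beta_0+2\pi\imath\alpha_0\bigl(\widehat{\tau}_1(p)-\widehat{\tau}_{-1}(p)\bigr)$ and $H(p):=-2\pi\imath\alpha_0\bigl(\widehat{R}_1(p)-\widehat{R}_{-1}(p)\bigr)$, both analytic at $p=0$ by the previous step. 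This yields the first identity in \eqref{ilgattoanovecode}, while the second one is just the definition of $\widehat{R}_n$.
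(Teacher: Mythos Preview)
Your proposal is correct and follows essentially the same approach as the paper's proof: both exploit \eqref{eq:nonres} to ensure $\overline{p}\neq 0$, set up the auxiliary systems for $(\widehat{\tau}_n)_{n\neq 0}$ and $(\widehat{R}_n)_{n\neq 0}$ in $\ell_2(\Z\setminus\{0\})$ via the machinery of Lemma~\ref{inferno}, and then substitute $\widehat{q}_{\pm 1}=\widehat{R}_{\pm 1}+\widehat{\tau}_{\pm 1}\widehat{q}_0$ into the $n=0$ row of \eqref{suspiria} to isolate the scalar equation, arriving at the same definitions of $H$ and $Q$. The paper is simply terser, writing ``arguing as in the proofs of Propositions~\ref{profondo rosso} and~\ref{tenebre} and Lemma~\ref{inferno}'' where you spell out the reality/telescoping argument and the forward-recursion growth step explicitly.
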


\begin{proof}
 Using the fact that \eqref{eq:nonres} implies that $\widetilde{p}\neq0$ and arguing as in the proofs of Propositions \ref{profondo rosso} and \ref{tenebre} and Lemma \ref{inferno}, one can check that, up to a discrete set of points, in a neighborhood of $p=0$ there exist a unique solution $(\widehat{\tau}_{n}(p))_n\in \ell^{2}(\mathbb{Z}\backslash\{0\})$ of
 \begin{equation}
 \label{eq-taumenouno}
 \begin{split}
\widehat{\tau}_{n}(p)&=-\frac{2\pi\imath \alpha_0}{\log (p+\imath \omega n)+4\pi\,\beta_{0}}\bigg\{\widehat{\tau}_{n+1}(p)-\widehat{\tau}_{n-1}(p)\bigg\}\,,\quad\mbox{for $n\neq 0,\pm1$}\,, \\
\widehat{\tau}_{\pm1}(p)&=-\frac{2\pi\imath \alpha_0}{\log (p\pm\imath \omega)+4\pi\,\beta_{0}}\bigg\{\pm\widehat{\tau}_{\pm2}(p)\mp1\bigg\}\,,
\end{split}
\end{equation}
and a unique solution $(\widehat{R}_{n}(p))_n\in \ell^{2}(\mathbb{Z}\backslash\{0\})$ of
\begin{equation}
\label{eq-erremenouno}
 \begin{split}
\widehat{R}_{n}(p)&=-\frac{2\pi\imath \alpha_0}{\log (p+\imath \omega n)+4\pi\,\beta_{0}}\bigg\{\widehat{R}_{n+1}(p)-\widehat{R}_{n-1}(p)\bigg\}+\widehat{g}_n(p)\,,\quad\mbox{for $n\neq 0,\pm1$}\,, \\
\widehat{R}_{\pm1}(p)&=\mp\frac{2\pi\imath \alpha_0}{\log (p\pm\imath \omega)+4\pi\,\beta_{0}}\widehat{R}_{\pm2}(p)+\widehat{g}_{\pm1}(p).
\end{split}
\end{equation}
As a consequence, arguing again as in the proof of Proposition \ref{tenebre}, one finds that, if $\widehat{q}_0(p)$ solves \eqref{suspiria} for $n=0$, then all the solutions for $n\neq0$ are given by $\widehat{q}_n(p)=\widehat{R}_n(p)+\widehat{\tau}_n(p)\,\widehat{q}_0(p)$.

Therefore, it is left to discuss the existence and the behavior of $\widehat{q}_0(p)$. Setting $n=0$ in \eqref{suspiria} and using the previous decomposition with $n=\pm1$, there results that
\begin{align*}
\left(\log (p)+4\pi\beta_{0}+2\pi\imath \alpha_0[\widehat{\tau}_{1}(p)-\widehat{\tau}_{-1}(p)] \right)\widehat{q}_{0}(p)=-2\pi\imath \alpha_0[\widehat{R}_{1}(p)-\widehat{R}_{-1}(p)]+\widehat{G}_0(p),
\end{align*}
where, by \eqref{operator} and \eqref{eq:laplaceZ2},
\[
 \widehat{G}_0(p)=-4\pi\imath \widehat{Z}_2(p).
\]
As a consequence, setting
\begin{equation}
\label{eq-acca}
H(p):=-2\pi\imath \alpha_0[\widehat{R}_{1}(p)-\widehat{R}_{-1}(p)],
\end{equation}
and
\begin{equation}
\label{eq-cu}
Q(p):=4\pi\beta_{0}+2\pi\imath \alpha_0[\widehat{\tau}_{1}(p)-\widehat{\tau}_{-1}(p)].
\end{equation}
one gets the claim.
\end{proof}

\begin{rem}
 \label{rem-whynonres}
 The proof of Proposition \ref{nonhosonno} strongly relies on the no-resonance assumption \eqref{eq:nonres}. Indeed, the strategy used before works only if the singularity of the coefficients of the equations is not located at $p=0$. In the one-dimensional case (see \cite{CCLR}) a regularization technique has been proposed in order to overcome such an issue, but it is not clear to us how to adapt it to this case due to the logarithmic nature of the singularities. More precisely, in odd dimensions coefficients can be regularized simply by squaring, as the singularity is due to the presence of a square root.  It is conceivable that in two dimensions a local exponential map would straighten the log-singularity, thus allowing to perform a detailed analysis.
  However, this interesting and challenging technical issue deserves a further specific investigation, and we plan to address it in a forthcoming paper. 
\end{rem}
\begin{rem}
\label{rem-sameform}
Notice that \eqref{ilgattoanovecode} with analytic $H,Q,\hat{R}_n,\hat{\tau}_n$ holds in a neighborhood of $p=0$.  Adapting the above arguments relying on the analytic Fredholm alternative one could prove that the same representation holds in the whole left half-plane, but a priori analyticity of the mentioned coefficients may fail at a discrete set of points. For this reason, in what follows we study $\hat{q}_n$ in the left half-plane using a different strategy.
\end{rem}

\subsection{Extension of the Laplace transform of the charge to the open left half-plane}
\label{subsec:lefthalfplane}

In the previous sections we proved that $\widehat{q}$ can be extended analitically to the whole closed right half-plane, except for the points $\imath\omega n$, for all $n\in\Z$, where logarithmic branch points are located. Here, we complete the analysis discussing the further extension to the open left half-plane. More precisely, we show that it is analytic on the open left half-plane except, possibly, for infinitely many simple poles $(p_n(\alpha_0,\omega))_{n\in\Z}$ of the form
\begin{equation}
\label{eq-expansion}
 p_n(\alpha_0,\omega)=p_0(\alpha_0,\omega)+\imath\omega n,\qquad\text{with}\quad\re(p_0(\alpha_0,\omega))<0,\quad\im(p_0(\alpha_0,\omega))\in(0,\omega),
\end{equation}
and infinitely many branch cuts starting at $\imath\omega n$, for all $n\in\Z$. For the sake of simplicity, we denote the set of all the branch cuts as
\begin{equation}
\label{eq-branchcuts}
 B(\omega):=\{p\in\C:\re(p)<0\:\text{ and }\:\im(p)=\omega n\:\text{ for some }\:n\in\Z\}.
\end{equation}

Let us start with the following preliminary remarks. As mentioned before, in Sections \ref{subsec:Laplace} and \ref{subsec:inversion}, we proved that there exists a unique function $\widehat{q}$ that solves \eqref{laplace-charge} in the closed right half-plane and is analytic on $\{p\in\C:\re(p)\geq0\}\setminus\{\imath\omega n:n\in\Z\}$. It is, then, left to discuss the possibility of extending it to the open letf half-plane.

However, as in the previous section, it is convenient to use the operator form of \eqref{laplace-charge} given by \eqref{Poe}. From this point of view, thus far we have proved that there exists a unique sequence of functions $\widehat{q}=(\widehat{q}_n)_n$ (defined as in \eqref{eq-qn}) such that:
\begin{itemize}
 \item[(i)] each $q_n$ is analytic on $\S_r(\omega)\setminus\{0\}$, with $\S_r(\omega):=\{p\in\S(\omega):\re(p)\geq0\}$;
 
 \item[(ii)] for every $p\in\S_r(\omega)$, $q(p)=(\widehat{q}_n(p))_n\in\ell_2$ and is given by
 \begin{equation}
  \label{eq-inversion}
  \widehat{q}(p)=(\ID-\mathcal{L}(p))^{-1}\,\widehat{g}(p),
 \end{equation}
 (recall that the version of $\widehat{g}=(\widehat{g}_n)_n$ as a sequence of functions is given by \eqref{operator}).
\end{itemize}
Therefore, the goal of this section is to investigate the possibility of extending the right hand side of \eqref{eq-inversion} to $\S_\ell(\omega)$, with
\[
 \S_\ell(\omega):=\{p\in\C:\re(p)<0,\:\im(p)\in[0,\omega]\}.
\]

\begin{rem}
 Note that in this case we have to take into account also points with imaginary part equal to $\omega$ as it is necessary to guarantee that the jumps at the branch cuts are finite. In the right half-plane this is not necessary, since $\widehat{q}$ is analytic everywhere up to the branch points.
\end{rem}

To this aim, one can see first that $\widehat{g}=(\widehat{g}_n)_n$ is well defined and non-vanishing in $\S_\ell(\omega)$ and analytic on $\IN{\S_\ell(\omega)}$, so that in order to extend $\widehat{q}=(\widehat{q}_n)_n$ it suffices to study behavior of $(\ID-\mathcal{L}(p))^{-1}$ here. Indeed, if it is analytic, then (using the analytic Fredholm alternative) $\widehat{q}=(\widehat{q}_n)_n$ is analytic. On the other hand, if $(\ID-\mathcal{L}(p))^{-1}$ does possess a pole $p_0$, then $\widehat{q}=(\widehat{q}_n)_n$ may display a pole here and thus, by \eqref{eq-qn}, $\widehat{q}$ may possess infinitely many poles of the form \eqref{eq-expansion}, as a single functions. Note that, clearly $(\ID-\mathcal{L}(p))^{-1}$ cannot be analytic in $\S_\ell(\omega)\setminus\IN{\S_\ell(\omega)}$ due to logarithmic branch cuts. However, if it has no pole here, then $\widehat{q}$, although not being analytic, is well defined. Hence, as a single function, has finite jumps at the branch cuts.

As a consequence, in this section we focus on possible
\begin{center}
 poles of $(\ID-\mathcal{L}(p))^{-1}$ in $\S_\ell(\omega)$,
\end{center}
namely, values of
\begin{center}
 $p\in\S_\ell(\omega)$ such that $\ker(\ID-\mathcal{L}(p))\neq\{0\}$.
\end{center}

\begin{rem}
Throughout the section, for the sake of simplicity, we will use the notation
\begin{equation}\label{eq:h}
h(p):=\frac{2\pi\imath}{\log p+4\pi\beta_0},
\end{equation}
so that $(\mathcal{L}(p)\widehat{q}(p))_n=-\alpha_0h(p+\imath\omega n)\big[\widehat{q}_{n+1}(p)-\widehat{q}_{n-1}(p)\big]$.
\end{rem}


\subsubsection{Poles of $(\ID-\mathcal{L}(p))^{-1}$ for small $\alpha_0$}

Following \cite{CCL}, our strategy consists of focusing first on small values of $\alpha_0$ and then of extending the results to arbitrary values of $\alpha_0$. This subsection addresses the former case. In the sequel, we will always explicit also the dependence of $\cL$ on $\alpha_0$.

\begin{teo}
\label{teo:poles_smallalpha}
Assume that \eqref{eq:nonres} is satisfied. Then, there exists $A_0>0$ such that, whenever $0<\vert \alpha_0\vert<A_0$, there is at most one pole $p_0=p_0(\alpha_0,\omega)$ of $(\ID-\mathcal{L}(\alpha_0,p))^{-1}$ on $\S_\ell(\omega)$. In particular, if such $p_0$ does exists, then it actually belongs to $\IN{\S_\ell(\omega)}$ and depends analytically on $\alpha_0$. As a consequence, $\widehat{q}$ can be extended to the open left half-plane, up to has at most a sequence of poles defined as in \eqref{eq-expansion}.
\end{teo}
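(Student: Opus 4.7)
The plan is to exploit the fact that, by inspection of \eqref{operator}, $\mathcal{L}(\alpha_0,p)=\alpha_0\,\widetilde{\mathcal{L}}(p)$ with $\widetilde{\mathcal{L}}(p)$ independent of $\alpha_0$; moreover, arguing as in the proofs of Propositions~\ref{profondo rosso} and \ref{tenebre}, $p\mapsto\widetilde{\mathcal{L}}(p)$ takes values in the compacts on $\ell^2(\Z)$ and is analytic on $\IN{\S_\ell(\omega)}$. Hence $\mathcal{F}(\alpha_0,p):=\ID-\alpha_0\widetilde{\mathcal{L}}(p)$ is jointly analytic in $(\alpha_0,p)$ and reduces to $\ID$ at $\alpha_0=0$, so that $(\ID-\mathcal{L}(\alpha_0,p))^{-1}$ has a pole precisely where $1/\alpha_0$ is an eigenvalue of the compact operator $\widetilde{\mathcal{L}}(p)$. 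For small $|\alpha_0|$ this reciprocal is large and the poles can concentrate only where $\|\widetilde{\mathcal{L}}(p)\|$ itself is large, which localizes the problem.

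I would first rule out poles far from the imaginary axis via a Neumann series argument. Since
\[
\|\widetilde{\mathcal{L}}(p)\|\leq 2\sup_{n\in\Z}|h(p+\imath\omega n)|,
\]
with $h$ as in \eqref{eq:h}, and since $h(p+\imath\omega n)\to 0$ as $|n|\to\infty$ uniformly on compact subsets of $\IN{\S_\ell(\omega)}$, the supremum is attained on a finite window of $n$'s and is finite on every compact $K\subset\IN{\S_\ell(\omega)}$. Thus, for $|\alpha_0|<(2\sup_K\|\widetilde{\mathcal{L}}\|)^{-1}$, the Neumann series converges uniformly on $K$, so $\mathcal{F}(\alpha_0,\cdot)^{-1}$ is analytic on $K$. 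Inspecting $h$ one sees that $\|\widetilde{\mathcal{L}}(p)\|$ stays bounded as $\re(p)\to-\infty$ and along $\im(p)\in\{0,\omega\}$ (since $p+\imath\omega n$ keeps a definite distance from $p_\mathrm{s}$ by \eqref{eq:nonres}), and blows up only as $p+\imath\omega n\to p_\mathrm{s}$ for some $n\in\Z$. By \eqref{eq:nonres}, $p_\mathrm{s}\notin\imath\omega\Z$ and the constraint $\im(p)\in[0,\omega]$ selects a single integer $\overline{n}\in\N$; consequently, the sole possible accumulation point inside $\S_\ell(\omega)$ is $\overline{p}=p_\mathrm{s}-\imath\omega\overline{n}\in\imath\R\setminus\{0\}$.

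To actually produce the pole and its analytic dependence on $\alpha_0$, I would fix a small disk $D$ centered at $\overline{p}$ and consider a regularized Fredholm determinant $F(\alpha_0,p)$ of $\alpha_0\widetilde{\mathcal{L}}(p)$, obtained after the finite-rank truncation justified by the decay of $h(p+\imath\omega n)$ in $n$. Then $F$ is jointly analytic near $(0,\overline{p})$, $F(0,\cdot)\equiv 1$, and its zeros in $p$ coincide with the poles of $\mathcal{F}(\alpha_0,\cdot)^{-1}$. The directional derivative $\partial_{\alpha_0}F(0,\overline{p})$ is computed through the sequences $(\widehat{t}_n(\overline{p}))_n$ introduced in Lemma~\ref{inferno}, and its non-vanishing reduces to a Kronecker-type identity of the same kind as \eqref{eq-ipabs} in the proof of Proposition~\ref{tenebre}. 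The analytic implicit function theorem then yields $A_0>0$ and a unique analytic branch $\alpha_0\mapsto p_0(\alpha_0,\omega)\in D$ with $p_0(0,\omega)=\overline{p}$, while Rouch\'e's theorem inside $D$ precludes further zeros of $F(\alpha_0,\cdot)$. Combined with the Neumann step this yields at most one pole in the whole of $\S_\ell(\omega)$.

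The main obstacle, I expect, is precisely the non-degeneracy at $(0,\overline{p})$: one needs the auxiliary sequences from Lemma~\ref{inferno} to remain non-trivial and to satisfy a bracket identity strong enough to imply $\partial_{\alpha_0}F(0,\overline{p})\neq 0$, which is the mechanism that actually pushes $p_0(\alpha_0,\omega)$ off the imaginary axis into $\IN{\S_\ell(\omega)}$ (consistently with Remark~\ref{rmk:ontheaxis}, which already rules out poles on $\imath\R\setminus\{0\}$). Granted this step, formula \eqref{eq-qn} transplants the single pole of the strip $\S_\ell(\omega)$ into the $\imath\omega$-periodic string $p_n(\alpha_0,\omega)=p_0(\alpha_0,\omega)+\imath\omega n$ of simple poles of $\widehat{q}$ on the whole open left half-plane, establishing \eqref{eq-expansion} and completing the proof.
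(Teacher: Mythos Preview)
Your localization step via the Neumann series is sound in spirit: the only obstruction to invertibility of $\ID-\alpha_0\widetilde{\mathcal{L}}(p)$ for small $|\alpha_0|$ does come from a neighborhood of $\overline{p}$, where the coefficient $h(p+\imath\omega\overline{n})$ blows up. The gap is in your endgame via a Fredholm determinant and the implicit function theorem. The point $(0,\overline{p})$ is degenerate in two ways that break the argument. First, $\widetilde{\mathcal{L}}(p)$ itself has a pole at $p=\overline{p}$ (since $\log(\overline{p}+\imath\omega\overline{n})+4\pi\beta_0=0$), so $\alpha_0\widetilde{\mathcal{L}}(p)$ is \emph{not} a jointly analytic compact-operator family near $(0,\overline{p})$ and no Fredholm determinant $F(\alpha_0,p)$ built from it is regular there. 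Second, even where $F$ makes sense you have $F(0,\cdot)\equiv 1$, which is nonvanishing; there is no zero at $(0,\overline{p})$ to which the implicit function theorem applies, and computing $\partial_{\alpha_0}F(0,\overline{p})$ does not set up a branch of zeros. What actually happens is that the pole $p_0(\alpha_0)$ runs into the singularity of the family as $\alpha_0\to 0$, a degenerate bifurcation that a direct determinant/IFT scheme cannot resolve.

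The paper handles this by a different mechanism. It first sharpens the localization to a disk of radius $C\alpha_0^2$ around $p_{\mathrm s}$ via an $\mathcal{L}^2$ estimate (Proposition~\ref{pro:distance_poles}); then, using Lemma~\ref{lem-cisalva}, it reduces the homogeneous equation to scalar nonlinear recursions for the ratios $\rho(p)=\widehat{\xi}(p)/\widehat{\xi}(p-\imath\omega)$ and $\Omega(p)=\widehat{\xi}(p-\imath\omega)/\widehat{\xi}(p)$, solved uniquely by contraction and expanded as continued fractions. After the blow-up $p=p_{\mathrm s}+\alpha_0^2 z$, the pole condition $1/\rho-\Omega=0$ becomes a \emph{regular} scalar equation $F(z,\alpha_0)=0$ with $F(\overline{z},0)=0$ and $\partial_z F(\overline{z},0)\neq 0$ for an explicit $\overline{z}$, and the analytic IFT then yields the unique branch $p_0(\alpha_0)$. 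The $\alpha_0^2$ rescaling together with the ratio reduction is precisely what desingularizes the bifurcation your determinant approach cannot see.
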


Previously to the proof of Theorem \ref{teo:poles_smallalpha} some auxiliary results are required. As extensively explained before, in order to prove the theorem, it is sufficient to prove that there may exists at most one point $p_0\in\S_\ell(\omega)$ such that the homogeneous version of \eqref{Poe}, namely
\begin{equation}
\label{eq:homogeneous_eq}
\widehat{\xi}(p)=\mathcal{L}(\alpha_0,p)\widehat{\xi}(p),
\end{equation}
admits a nontrivial solution in $\ell_2$ for some $p=p_0$, and that it actually belongs to $\IN{\S_\ell(\omega)}$.

For such purposes, it is useful to state the following result

\begin{lem}
\label{lem-cisalva}
 If there exists $p_0\in\S_\ell(\omega)$ such that there is $0\neq(\xi_n(p_0))_n\in\ell_2$ that solves \eqref{eq:homogeneous_eq}, then $\xi_n(p_0)\neq0$, for every $n\in\Z$.
\end{lem}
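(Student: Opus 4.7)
The plan is to argue by contradiction. Suppose $\xi_{n_0}(p_0)=0$ for some $n_0\in\Z$. Writing the homogeneous equation \eqref{eq:homogeneous_eq} componentwise via \eqref{operator} and \eqref{eq:h} yields the three-term linear recurrence
\[
\xi_n(p_0)=-\alpha_0\,h(p_0+\imath\omega n)\bigl[\xi_{n+1}(p_0)-\xi_{n-1}(p_0)\bigr],\qquad n\in\Z.
\]
Since by \eqref{eq:h} the function $h$ is nowhere zero on its domain of analyticity, and $\alpha_0\neq0$ by Assumption \ref{asum:mono}, this is a well-posed second-order linear difference equation with nowhere vanishing coefficients. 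Evaluating it at $n=n_0$ and using $\xi_{n_0}(p_0)=0$ immediately gives $\xi_{n_0+1}(p_0)=\xi_{n_0-1}(p_0)=:c$.

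I would then dispose of the subcase $c=0$ first. In that event two consecutive terms of a second-order linear recurrence with nowhere vanishing coefficients both vanish, and forward and backward iteration force $\xi_n(p_0)=0$ for every $n\in\Z$, contradicting the non-triviality of $(\xi_n(p_0))_n$. Hence necessarily $c\neq0$.

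The main obstacle is to derive a contradiction from the configuration $\xi_{n_0}(p_0)=0$, $\xi_{n_0\pm1}(p_0)=c\neq0$, combined with the $\ell_2$ constraint. My proposed approach is to introduce a companion solution $(\eta_n)_n$ of the same recurrence, determined by the initial data $\eta_{n_0}=1$, $\eta_{n_0+1}=0$, and to form the discrete Casoratian
\[
W_n:=\xi_n(p_0)\,\eta_{n+1}-\xi_{n+1}(p_0)\,\eta_n.
\]
A short calculation using that both sequences share the recurrence gives $W_n=-W_{n-1}$, so $|W_n|=|W_{n_0}|=|c|>0$ for every $n\in\Z$. Since $(\xi_n(p_0))_n\in\ell_2$ implies $\xi_n(p_0)\to0$ as $|n|\to\infty$, the constancy of $|W_n|$ forces $|\eta_n|\to\infty$ at both ends.

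A discrete WKB-type asymptotic analysis of the recurrence, exploiting that $\log(p_0+\imath\omega n)+4\pi\beta_0\sim\log|n|$ as $|n|\to\infty$ (so that the coefficient $1/(\alpha_0 h(p_0+\imath\omega n))$ grows logarithmically), should then show that at each of the two ends exactly one branch of the recurrence decays fast enough to be in $\ell_2$, while its companion grows; by the constancy of $|W_n|$, the $\ell_2$ solution at $+\infty$ (resp. $-\infty$) is essentially unique up to scalar multiples. The conclusion would follow by showing that the transmission condition across $n_0$ imposed by $\xi_{n_0}(p_0)=0$ and $\xi_{n_0\pm1}(p_0)=c$ is incompatible with the simultaneous $\ell_2$ behavior at both ends. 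Turning this informal picture into a rigorous asymptotic analysis of the recurrence — which is the step where the specific logarithmic nature of the coefficients enters in a non-trivial way — is the technical heart of the argument and the main obstacle I foresee.
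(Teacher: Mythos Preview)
Your proposal is incomplete --- you say so yourself --- and the route you chose is more complicated than needed. The Casoratian observation is correct, but the step where you invoke a ``transmission condition'' and claim it is incompatible with two-sided $\ell_2$ behavior is not justified: even granting a WKB dichotomy (a unique, up to scalars, decaying branch at each end), nothing prevents the two-sided $\ell_2$ solution from having an interior zero. That is exactly what has to be ruled out, so your argument is circular at this point unless you supply something specific to this recurrence.

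The paper sidesteps all of this by reversing the viewpoint. Instead of starting from a hypothetical zero entry, pick $n^*$ with $\xi_{n^*}(p_0)\neq0$ (guaranteed by nontriviality) and show that each neighbor must be nonzero. Concretely: if $\xi_{n^*-1}(p_0)=0$, rewrite the recurrence as
\[
\xi_{n+1}(p_0)=-\bigl[\alpha_0\,h(p_0+\imath\omega n)\bigr]^{-1}\xi_n(p_0)+\xi_{n-1}(p_0),
\]
and iterate \emph{forward} from the pair $(\xi_{n^*-1},\xi_{n^*})=(0,c)$, $c\neq0$. Since $|h(p_0+\imath\omega n)|^{-1}\sim \tfrac{1}{2\pi}\log|n|\to\infty$, the iterates produced this way grow and the forward tail cannot lie in $\ell_2$; contradiction. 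Hence $\xi_{n^*-1}(p_0)\neq0$, and repeating the argument at $n^*-1,n^*-2,\ldots$ gives $\xi_n(p_0)\neq0$ for all $n\leq n^*$. The mirror argument (assume $\xi_{n^*+1}(p_0)=0$ and iterate \emph{backward}) handles $n\geq n^*$.

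The point is that this is a purely one-sided growth statement about a single orbit of the recurrence, with explicit initial data $(0,c)$; no companion solution, no Casoratian, no asymptotic matching is required. In your setup you already had this information available --- once you derived $\xi_{n_0}(p_0)=0$, $\xi_{n_0+1}(p_0)=c\neq0$, you were exactly in the paper's situation (with $n^*=n_0+1$) and could have run the direct growth argument forward from there.
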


\begin{proof}
Notice that the proof of this Lemma retraces that of Lemma \ref{inferno}. If the assumptions of the Lemma are satisfied, then there exists $n^*\in\Z$, such that $\xi_{n^*}(p_0)\neq0$. Assume, by contradiction, that $\xi_{n^*-1}(p_0)=0$. Then, by \eqref{eq:homogeneous_eq}, and recalling \eqref{operator}, the equation for the $n^*$-th component gives
\[
\xi_{n^*+1}(p_0)=-\alpha_0^{-1}h^{-1}(p_0+\imath\omega n^*)\xi_{n^*}(p_0)\,.
\] 
Moreover, looking at the equation for $\xi_{n*+1}$ we find
\[
\xi_{n^*+2}(p_0)=[\alpha^{-2}_0h(p_0+\imath\omega n^*)h(p_0+\imath\omega (n^*+1))]\xi_{n^*}(p_0) \,.
\]
Iterating this procedure we find a relation of the form
\[
\xi_{n^*+k}(p_0)=F(p_0,\alpha_0,k)\xi_{n^*}\,,\qquad \forall k\in\N\,,
\]
where $\lim_k\vert F(p+0,\alpha_0,k)\vert=+\infty$. Then $(\xi_n(p_0))_n\not\in\ell_2$, reaching a contradiction. Hence, $\xi_{n^*-1}(p_0)\neq0$. However, we can now repeat the argument for $\xi_{n^*-2}(p_0)$ and so on, thus obtaining that $\xi_{n}(p_0)\neq0$, for every $n\leq n^*$.

Assume then, again by contradiction, that $\xi_{n^*+1}(p_0)=0$. Then, arguing as before one sees that $(\xi_n(p_0))_n\not\in\ell_2$, which contradicts the assumptions. Hence, as before, one obtains that $\xi_{n}(p_0)\neq0$, for every $n> n^*$, thus concluding the proof.
\end{proof}

An immediate consequence of Lemma \ref{lem-cisalva} is that, whenever \eqref{eq:homogeneous_eq} has a non-trivial solution at $p\in\S_\ell(p)$, then $\xi$, seen as a single function, does not vanish at $p$ and satisfies
\begin{equation}
\label{eq:homogeneous-exp}
\widehat{\xi}(p)=-\alpha_0h(p)[\widehat{\xi}(p+\imath\omega)-\widehat{\xi}(p-\imath\omega)].
\end{equation}
Therefore, to detect possible poles of $(\ID-\cL(\alpha_0,p))^{-1}$ it is sufficient to detect points in which \eqref{eq:homogeneous-exp} has a non trivial solution.

\medskip
To this aim, we first note that if \eqref{eq:homogeneous_eq} has a nontrivial solution at $p\in\S_\ell(p)$, then (in view of Lemma \ref{lem-cisalva} and \eqref{eq:homogeneous-exp}) one can define the functions
\begin{equation}\label{eq:rho}
\rho(p):=\frac{\widehat{\xi}(p)}{\widehat{\xi}(p-\imath\omega)}
\end{equation}
and
\begin{equation}\label{eq:Omega}
\Omega(p):=\frac{\widehat{\xi}(p-\imath\omega)}{\widehat{\xi}(p)},
\end{equation}
 which have to solve the following equations
\begin{equation}\label{eq:eq_rho}
\rho= \mathcal{N}(\rho)\,,\qquad\mathcal{N}(\rho):=\frac{\alpha_0h}{1+\alpha_0h\rho(\cdot+\imath\omega)},
\end{equation}
\begin{equation}\label{eq:eq_Omega}
\Omega=\mathcal{M}(\Omega),\qquad\mathcal{M}(\Omega):=-\frac{\alpha_0h(\cdot-\imath\omega)}{1-\alpha_0h(\cdot-\imath\omega)\Omega(\cdot-\imath\omega)}.
\end{equation}

Let us, first, study equations \eqref{eq:eq_rho} and \eqref{eq:eq_Omega} separately. Given $\delta>0$, define
\[
J_{\delta}:=\{p\in\C:\text{Im}(p)\geq \delta,\: \re(p)\in[0,1] \}
\]
and
\[
K_{\delta}:=\{p\in\C:\text{Im}(p)\leq -\delta,\: \re(p)\in[0,1] \}
\]
and denote by $C(J_\delta),\,C(K_\delta)$ the spaces of continuous functions on $J_\delta,\,K_\delta$, endowed with the sup-norm. Then, one can establish the next

\begin{lem}
\label{lem:contraction}
Given $M>0$, there exist $A_0>0$ and suitable $\delta_1,\delta_2>0$ such that, if $0<\vert \alpha_0\vert<A_0$, then $\mathcal{N}$ in \eqref{eq:eq_rho} is a contraction on $\{\Vert \rho\Vert_{ C(J_{\delta_1})}\leq M\}\subset C(J_{\delta_1})$, and $\mathcal{M}$ in \eqref{eq:eq_Omega} is a contraction on $\{\Vert\Omega\Vert_{ C(K_{\delta_2})}\leq M\}\subset C(K_{\delta_2})$. Hence, \eqref{eq:eq_rho} has a unique solution on $\{\Vert \rho\Vert_{ C(J_{\delta_1})}\leq M\}$ and \eqref{eq:eq_Omega} has a unique solution on $\{\Vert \Omega\Vert_{ C(K_{\delta_2})}\leq M\}$. Moreover, both these solutions are analytic.
\end{lem}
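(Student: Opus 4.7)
\medskip
\noindent\textbf{Proof plan for Lemma~\ref{lem:contraction}.}

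The plan is to verify the two contraction statements via a direct application of the Banach fixed-point theorem, exploiting the fact that $h(p)$, as defined in \eqref{eq:h}, becomes small when $|\operatorname{Im}(p)|$ is large. I will work out the case of $\mathcal{N}$ on $J_{r_1}$ in detail; the case of $\mathcal{M}$ on $K_{r_2}$ is completely analogous by symmetry (with the shift $p\mapsto p-\imath\omega$ in place of $p\mapsto p+\imath\omega$).

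First I will observe the two structural facts on which the whole argument rests. On the one hand, if $p\in J_{r_1}$ then $p+\imath\omega\in J_{r_1}$ as well, since $\operatorname{Im}(p+\imath\omega)\geq r_1+\omega\geq r_1$; in particular $\mathcal{N}$ is well defined as a self-map of $C(J_{r_1})$ provided the denominator in \eqref{eq:eq_rho} does not vanish. On the other hand, choosing $r_1$ large enough that $p_{\mathrm{s}}=\imath e^{2(\log 2-\gamma)}$ lies outside $J_{r_1}$, $h$ is analytic on a neighbourhood of $J_{r_1}$ (the relevant half-strip stays away from the branch cut of $\log$). Moreover, a direct inspection of \eqref{eq:h} yields
\[
|h(p)|\leq\frac{2\pi}{|\log p+4\pi\beta_0|}\leq \frac{2\pi}{\log r_1-C},\qquad p\in J_{r_1},
\]
for some constant $C$ depending only on $\beta_0$, so that $\eta(r_1):=\sup_{J_{r_1}}|h|\to 0$ as $r_1\to+\infty$.

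Next I will fix $r_1$ so large that $\eta(r_1)$ is arbitrarily small (to be tuned below) and then choose $A_0>0$ satisfying
\[
A_0 M\,\eta(r_1)\leq\tfrac12,\qquad 2A_0\,\eta(r_1)\leq M,\qquad 4A_0^2\,\eta(r_1)^2\leq \tfrac12.
\]
For $\rho\in C(J_{r_1})$ with $\|\rho\|\leq M$ and $|\alpha_0|<A_0$, the first condition gives $|1+\alpha_0 h(p)\rho(p+\imath\omega)|\geq 1/2$ on $J_{r_1}$, so $\mathcal{N}(\rho)$ is well defined and continuous; the second condition gives $|\mathcal{N}(\rho)(p)|\leq 2|\alpha_0 h(p)|\leq 2A_0\eta(r_1)\leq M$, showing that $\mathcal{N}$ maps the closed ball $\{\|\rho\|\leq M\}$ into itself. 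For the contraction estimate I will use the algebraic identity
\[
\mathcal{N}(\rho_1)-\mathcal{N}(\rho_2)=\frac{-\alpha_0^2 h^2\bigl[\rho_1(\cdot+\imath\omega)-\rho_2(\cdot+\imath\omega)\bigr]}{(1+\alpha_0 h\rho_1(\cdot+\imath\omega))(1+\alpha_0 h\rho_2(\cdot+\imath\omega))},
\]
which combined with the lower bound $1/2$ on each denominator yields
\[
\|\mathcal{N}(\rho_1)-\mathcal{N}(\rho_2)\|_{C(J_{r_1})}\leq 4A_0^2\,\eta(r_1)^2\,\|\rho_1-\rho_2\|_{C(J_{r_1})}\leq \tfrac12\,\|\rho_1-\rho_2\|_{C(J_{r_1})}.
\]
Hence $\mathcal{N}$ is a strict contraction on the ball $\{\|\rho\|\leq M\}$, and Banach's theorem yields the unique fixed point claimed in the statement. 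The argument for $\mathcal{M}$ on $K_{r_2}$ is the mirror image: $K_{r_2}$ is stable under $p\mapsto p-\imath\omega$ and $\eta'(r_2):=\sup_{K_{r_2}}|h(\cdot-\imath\omega)|\to 0$ as $r_2\to+\infty$, so the same three conditions (with $h(\cdot-\imath\omega)$ in place of $h$) give self-mapping and contractivity for a common $A_0$.

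Finally, to establish analyticity of the fixed points I will note that the closed subspace of functions in $C(J_{r_1})$ that are holomorphic in the interior $\operatorname{int}(J_{r_1})$ is complete (by Weierstrass' theorem on uniform limits of holomorphic functions), and that it is invariant under $\mathcal{N}$ since $h$ is holomorphic on $J_{r_1}$ and $\rho\mapsto \alpha_0 h/(1+\alpha_0 h\rho(\cdot+\imath\omega))$ preserves holomorphicity (the denominator has modulus $\geq 1/2$ and so does not vanish). Applying the contraction mapping theorem within this closed subspace produces a holomorphic fixed point. The only genuinely delicate point of the proof is the joint calibration of $r_1$ (and $r_2$) and $A_0$: one must first push $r_i$ far enough to make $\eta$ (resp.\ $\eta'$) small, \emph{before} the smallness of $\alpha_0$ can be exploited to close the three inequalities above; the non-resonance condition is used only implicitly, through the fact that the branch cut structure of $\log p+4\pi\beta_0$ is compatible with pushing $r_i$ to infinity.
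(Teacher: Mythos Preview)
Your proof is correct and follows essentially the same approach as the paper: choose $r_1$ large so that $\sup_{J_{r_1}}|h|$ is small, then take $|\alpha_0|$ small to obtain self-mapping and contractivity via the same algebraic identity for $\mathcal{N}(\rho_1)-\mathcal{N}(\rho_2)$, and deduce analyticity by running the contraction argument in the closed subspace of holomorphic functions (Weierstrass/Morera). Your treatment is in fact slightly more explicit than the paper's (you spell out the shift-stability $J_{r_1}+\imath\omega\subset J_{r_1}$ and quantify the three smallness conditions on $A_0$), though your closing remark on the non-resonance condition is unnecessary here, as this lemma does not use it.
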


\begin{proof}
We just focus on $\mathcal{N}$ and equation \eqref{eq:eq_rho}, as the proof for $\mathcal{M}$ and \eqref{eq:eq_Omega} follows along the same lines. By \eqref{eq:h}, one immediately sees that for $\delta_1$ large, there holds $\vert h\vert<1/2$ on $J_\delta$. Consider, then, $\rho\in C(J_{\delta_1})$, with $\Vert\rho\Vert_{ C(J_{\delta_1})}\leq M$. As a consequence,
\[
\Vert\mathcal{N}(\rho)\Vert_{ C(J_{\delta_1})}\leq\frac{1}{2}\frac{\vert\alpha_0\vert}{(1-\vert\alpha_0\vert M/2)}\leq M\,,
\]
for $\vert\alpha_0\vert\leq \frac{2M}{1+M^2}$, so that $\mathcal{N}$ preserves the ball $\{\Vert \rho\Vert_{ C(J_{\delta_1})}\leq M\}$. Moreover, easy computations yields
\[
 \mathcal{N}(\rho_1)-\mathcal{N}(\rho_2)=\alpha_0^2h^2\,\frac{\rho_2(\cdot+\imath\omega)-\rho_2(\cdot+\imath\omega)}{\big(1+\alpha_0h\rho_1(\cdot+\imath\omega)\big)\big(1+\alpha_0h\rho_2(\cdot+\imath\omega)\big)},\qquad\forall\rho_1,\,\rho_2\in C(J_{\delta_1}),
\]
so that
\[
 \|\mathcal{N}(\rho_1)-\mathcal{N}(\rho_2)\|_{ C(J_{\delta_1})}\leq\frac{\alpha^2_0}{4\big(1-\vert\alpha_0\vert M/2\big)^2}\,\|\rho_1-\rho_2\|_{ C(J_{\delta_1})},\qquad\forall\rho_1,\,\rho_2\in C(J_{\delta_1}),
\]
which shows that $\mathcal{N}$ is a contraction on $\{\Vert \rho\Vert_{ C(J_{\delta_1})}\leq M\}$, for $\vert\alpha_0\vert$ smaller than $\frac{2}{1+M}$. Whence, \eqref{eq:eq_rho} has a unique solution on $\{\Vert \rho\Vert_{ C(J_{\delta_1})}\leq M\}$ by the Banach-Caccioppoli fixed oint theorem. In addition, since analiticity is preserved by uniform convergence and Morera's theorem, one can repeat the contraction argument in the subset of the analytic function of $\{\Vert \rho\Vert_{ C(J_{\delta_1})}\leq M\}$, thus proving the solution to be analytic.
\end{proof}
\begin{rem}
Notice that in the above Lemma one can take 
\[
A_0=\min\{2M/(1+M^2),2/(1+M)\}\,.
\]
\end{rem}

\noindent Furthermore, as in \cite[Proposition 18]{CCL}, one can show that the solutions of \eqref{eq:eq_rho} and \eqref{eq:eq_Omega} can be extended also outside $J_{\delta_1}$ and $K_{\delta_2}$. In the sequel we will also make explicit the dependence of $\rho$ and $\Omega$ from $\alpha_0$.

\begin{cor}
\label{cor:extension_rhoOmega}
There exists $A_0>0$ such that if $0<\vert\alpha_0\vert< A_0$, then the solutions $\rho(p,\alpha_0)$, $\Omega(p,\alpha_0)$ obtained by Lemma \ref{lem:contraction} can be extended to the open left half-plane and are meromorphic on $\{p\in\C:\re(p)<0\}\setminus B(\omega)$.
\end{cor}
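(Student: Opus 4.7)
The central observation is that the fixed-point equations \eqref{eq:eq_rho} and \eqref{eq:eq_Omega} can be rearranged into the explicit meromorphic recursions
\[
\rho(p)=\frac{\alpha_0\,h(p)}{1+\alpha_0\,h(p)\,\rho(p+\imath\omega)},\qquad\Omega(p)=-\frac{\alpha_0\,h(p-\imath\omega)}{1-\alpha_0\,h(p-\imath\omega)\,\Omega(p-\imath\omega)},
\]
which propagate $\rho$ (resp.\ $\Omega$) downward (resp.\ upward) in the imaginary direction by steps of $\omega$, and are analytic in $p$ away from the logarithmic branch cut $(-\infty,0]$ and the pole $p_{\mathrm{s}}$ of $h$.

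The plan is to implement this in two stages. First, I would observe that the smallness $|\alpha_0 h(p)|<1/2$ driving the contraction of Lemma \ref{lem:contraction} in fact holds uniformly on any horizontal strip $\{\re(p)\in[a,b],\,\im(p)\geq r(a,b)\}$ of bounded real part, because $|h(p)|=O(1/\log|\im(p)|)$ as $\im(p)\to+\infty$. Rerunning the fixed-point argument on such strips, and using uniqueness of the contraction on overlaps with $J_{r_1}$ to patch everything together, produces $\rho$ analytic on a full upper region $\{\im(p)\geq r_1'\}$ (with $r_1'$ large enough to avoid $p_{\mathrm{s}}$ and the origin), and symmetrically $\Omega$ analytic on $\{\im(p)\leq -r_2'\}$.

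Second, starting from these enlarged analytic regions I iterate the recursion one strip of height $\omega$ at a time: if $\rho$ is already defined and meromorphic on a set $U$, the right-hand side defines $\rho$ on $\{p:p+\imath\omega\in U\}$ as a meromorphic function whose only new non-analyticities are isolated poles at zeros of $1+\alpha_0 h(p)\rho(p+\imath\omega)$ and the branch cut of $h(p)$ inherited from $\log p$. Finite iteration covers any compact subset of $\{\re(p)<0\}\setminus B(\omega)$: after $N$ steps the accumulated cuts are precisely $\{\re(p)<0,\ \im(p)=-k\omega,\ 0\leq k\leq N-1\}\subset B(\omega)$. The symmetric argument applied upward to the recursion for $\Omega$ completes the construction.

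The main technical point is to verify that the iterated extension is genuinely meromorphic on the target region, rather than multivalued or densely singular. The smallness hypothesis $|\alpha_0|<A_0$ is essential here: combined with the logarithmic decay of $h$, it keeps $|\alpha_0 h(p)\rho(p+\imath\omega)|$ uniformly bounded away from $1$ on most of each strip, so the zeros of the denominators at each step form only a discrete set, and the inductive extension indeed produces only isolated poles in addition to the branch cuts $B(\omega)$.
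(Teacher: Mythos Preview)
Your argument is correct and uses the same two ingredients as the paper: a contraction where $|h|$ is small, followed by a meromorphic continuation via the explicit recursions for $\rho$ and $\Omega$. The only difference is the order of operations. The paper fills in one vertical strip $\{\re(p)\in[k,k+1]\}$ at a time: it reruns the contraction of Lemma \ref{lem:contraction} at the top of the strip and then iterates the recursion downward, before moving to the next strip to the left. You instead first extend the contraction horizontally to all of $\{\im(p)\geq r_1'\}$ (this works because $|h(p)|\lesssim 1/\log|p|$ uniformly once $\im(p)$ is large, so a single $r_1'$ suffices and no dependence on the real-part window is actually needed), and only then iterate the recursion downward across the whole plane. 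Both routes yield the same meromorphic extension.

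One small point about your last paragraph: the smallness of $|\alpha_0|$ is needed only to run the initial contraction and thereby have an analytic $\rho$ to start the induction. Once that is available, discreteness of the poles in the extension step does not require $|\alpha_0 h\rho|$ to stay away from $1$; it follows simply because $1+\alpha_0 h(p)\rho(p+\imath\omega)$ is a nonconstant meromorphic function (it tends to $1$ as $\im(p)\to+\infty$), whose zero set is therefore discrete. So the justification you give there is not quite the right one, though the conclusion stands.
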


\begin{proof}
Observe that the coefficients in \eqref{eq:eq_rho} and \eqref{eq:eq_Omega} are analytic on $\{p\in\C:\re(p)\in[0,1]\}$, except for logarithmic branch points at $p=\imath\omega n, n\in\Z$. Then, using the explicit form of \eqref{eq:eq_rho} and \eqref{eq:eq_Omega}, one can extend $\rho(p,\alpha_0),\,\Omega(p,\alpha_0)$ to the whole $\{p\in\C:\re(p)\in[0,1]\}$ in a unique way and, in addition, prove that they are meromorphic here, up to the branch points.

Therefore, one can repeat the same argument of Lemma \ref{lem:contraction} and the previous remarks in order to extend in a unique way the solutions, first to $\{p\in\C:\re(p)\in[-1,0]\}$, and then (proceeding in an iterative way) to the whole $\{p\in\C:\re(p)<0\}$. In addition, they are meromorphic here, up to the branch cuts.
\end{proof}

Now, we can go back to the study of \eqref{eq:homogeneous_eq} on $\S_\ell(\omega)$. However, before presenting the proof of Theorem \ref{teo:poles_smallalpha}, a further auxiliary result is necessary. In particular, we prove that, for $\alpha_0$ small, points where \eqref{eq:homogeneous_eq} admits a non trivial solution are distant more than $C\alpha^2_0$ from the point $p_{\mathrm{s}}$ defined by \eqref{eq-truepole}, for some $C>0$.

\begin{pro}
\label{pro:distance_poles}
Under the assumptions of Theorem \ref{teo:poles_smallalpha}, there exist $C>0$ and $A_0>0$ such that, for $0<|\alpha_0|<A_0$, \eqref{eq:homogeneous_eq} admits only the trivial solution for every $p\in\{p\in\S_\ell(\omega):|p-p_{\mathrm{s}}|\geq C\alpha^2_0\}$.
\end{pro}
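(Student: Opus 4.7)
The plan is to reformulate the existence of a non-trivial $\ell_2$-solution of \eqref{eq:homogeneous_eq} as a single scalar equation involving the canonical fixed-point functions of Lemma \ref{lem:contraction}, and then to localize its solutions to an $O(\alpha_0^2)$-neighbourhood of $p_{\mathrm{s}}$ by means of the continued-fraction expansions of Corollary \ref{cor:continued fractions}.

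First I would reduce to a scalar condition. Given any non-trivial $\widehat{\xi}(p_0)\in\ell_2$ solving \eqref{eq:homogeneous_eq} at some $p_0\in\S_\ell(\omega)$, Lemma \ref{lem-cisalva} excludes vanishing components, so the ratios $\widetilde{\rho}(p):=\widehat{\xi}(p)/\widehat{\xi}(p-\imath\omega)$ and $\widetilde{\Omega}(p):=1/\widetilde{\rho}(p)$ are well defined along the $\omega$-lattice through $p_0$, satisfy \eqref{eq:eq_rho} and \eqref{eq:eq_Omega}, and obey $\widetilde{\rho}\cdot\widetilde{\Omega}\equiv 1$. The $\ell_2$-decay of $\widehat{\xi}$ at $\pm\infty$, together with the recursion, forces $\widetilde{\rho}(p_0+\imath\omega n)\to 0$ and $\widetilde{\Omega}(p_0-\imath\omega n)\to 0$ as $n\to+\infty$, so for $n$ large enough both fall inside the contraction balls of Lemma \ref{lem:contraction}. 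Uniqueness there, propagated through the meromorphic extension of Corollary \ref{cor:extension_rhoOmega}, identifies $\widetilde{\rho}$ with $\rho(\cdot,\alpha_0)$ and $\widetilde{\Omega}$ with $\Omega(\cdot,\alpha_0)$; hence the existence of a non-trivial solution at $p_0$ is equivalent to the scalar equation
\begin{equation*}
\rho(p_0,\alpha_0)\,\Omega(p_0,\alpha_0)=1.
\end{equation*}

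Next I would exploit the small-$\alpha_0$ expansion. The no-resonance hypothesis \eqref{eq:nonres} ensures that, for $p\in\S_\ell(\omega)$, only the shift $n=0$ can bring $p+\imath\omega n$ close to $p_{\mathrm{s}}$, so the tails of the continued fractions of Corollary \ref{cor:continued fractions} are uniformly $O(\alpha_0^2)$ and truncation yields
\begin{equation*}
\rho(p,\alpha_0)=\alpha_0\,h(p)\bigl(1+O(\alpha_0^2)\bigr),\qquad \Omega(p,\alpha_0)=-\alpha_0\,h(p-\imath\omega)\bigl(1+O(\alpha_0^2)\bigr),
\end{equation*}
uniformly in $p\in\S_\ell(\omega)$ with $|p-p_{\mathrm{s}}|\geq C\alpha_0^2$. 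On $\S_\ell(\omega)$ one has $\im(p-\imath\omega)\leq 0<\im(p_{\mathrm{s}})$, hence $|h(p-\imath\omega)|\leq K=K(\omega)$; and a Taylor expansion of $\log(p/p_{\mathrm{s}})=\log p+4\pi\beta_0$ at $p_{\mathrm{s}}$ gives $|h(p)|\leq K'/|p-p_{\mathrm{s}}|$. Multiplying and choosing $C>2KK'$ then leads to
\begin{equation*}
|\rho(p,\alpha_0)\,\Omega(p,\alpha_0)|\;\leq\;\tfrac{1}{2}+O(\alpha_0^2)<1
\end{equation*}
on $\{p\in\S_\ell(\omega):|p-p_{\mathrm{s}}|\geq C\alpha_0^2\}$ for $|\alpha_0|<A_0$ with $A_0$ small enough, contradicting $\rho\Omega=1$.

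The hard part will be the identification step: one must verify that the $\ell_2$-decay of $\widehat{\xi}$ is strong enough to land $\widetilde{\rho}$ (resp.\ $\widetilde{\Omega}$) in the contraction ball of Lemma \ref{lem:contraction} at least for sufficiently shifted arguments, and then propagate the equality to the whole strip through the analytic/meromorphic continuation of Corollary \ref{cor:extension_rhoOmega}, keeping track of possible poles of $\rho,\Omega$. A secondary technical difficulty is the uniformity of the continued-fraction truncation over the non-compact strip and near the branch cuts $B(\omega)$; here the no-resonance assumption \eqref{eq:nonres} is what precludes an accidental clustering of shifts $p+\imath\omega n$ around $p_{\mathrm{s}}$.
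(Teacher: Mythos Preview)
Your route is genuinely different from the paper's and considerably more elaborate. The paper does not pass through the scalar condition $\rho\,\Omega=1$ at all for this proposition; instead it squares the operator. If $\widehat{\xi}$ solves \eqref{eq:homogeneous_eq} then it also solves $\widehat{\xi}=\cL^2(\alpha_0,p)\widehat{\xi}$, and the matrix entries of $\cL^2$ involve only the products $\alpha_0^2\,h(p+\imath\omega n)\,h(p+\imath\omega(n\pm1))$. Writing $p=p_{\mathrm{s}}+\alpha_0^2 z$ and Taylor-expanding $\log(p_{\mathrm{s}}+\alpha_0^2 z+\imath\omega n)$ around $p_{\mathrm{s}}+\imath\omega n$, the paper checks that each such product is bounded by $1/8$ whenever $|z|>C$ and $|\alpha_0|$ is small; hence $\|\cL^2(\alpha_0,p)\|_{\ell_2\to\ell_2}<1/2$, so $\ID-\cL^2$ (and therefore $\ID-\cL$) is injective. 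The $\rho,\Omega$ machinery is reserved for the subsequent proof of Theorem~\ref{teo:poles_smallalpha}, where it pins down the unique zero via the Implicit Function Theorem. The operator-squaring argument is short and avoids all the identification and uniformity issues you flag.

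Your approach is salvageable, but two points need repair. First, the truncation $\rho(p,\alpha_0)=\alpha_0 h(p)\bigl(1+O(\alpha_0^2)\bigr)$ is \emph{not} uniform on $\{|p-p_{\mathrm{s}}|\ge C\alpha_0^2\}$: near $p_{\mathrm{s}}$ one has $|h(p)|\sim|p-p_{\mathrm{s}}|^{-1}$, so the first correction $\alpha_0 h(p)\,\rho(p+\imath\omega)$ in the continued fraction is of size $\alpha_0^2|h(p)|\cdot O(1)=O(1/C)$, not $O(\alpha_0^2)$. The conclusion $|\rho\,\Omega|<1$ still follows if you track $O(1/C)$ remainders and take $C$ large, but not as written. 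Second, the identification $\widetilde{\rho}=\rho$ needs more than ``$\ell_2$-decay forces $\widetilde{\rho}(p_0+\imath\omega n)\to0$'': membership in $\ell_2$ does not by itself control successive ratios, and you must argue (e.g.\ via the $2\times2$ transfer-matrix form of the recursion, whose eigenvalues are asymptotically $-1/(\alpha_0 h_n)$ and $\alpha_0 h_n$) that a decaying solution lies along the contracting direction, so that the ratio is asymptotically $\alpha_0 h(p_0+\imath\omega n)\to0$. The paper's direct $\cL^2$-norm estimate bypasses both difficulties.
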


\begin{proof}
Preliminarily, note that, if $p\in\S_\ell(\omega)$ supports a solution $\widehat{\xi}(p)$ of \eqref{eq:homogeneous_eq}, then $\widehat{\xi}(p)$ solves as well
\begin{equation}
 \label{eq-L^2}
 \widehat{\xi}(p)=\cL^2(\alpha_0,p)\widehat{\xi}(p).
\end{equation}
Therefore, let us focus on this new equation. Fix a generic point $p\in\S_\ell(\omega)$ and define $z$ so that $p=p_{\mathrm{s}}+\alpha^2_0 z$.  An easy computation yields
\begin{multline}
\label{eq:L^2}
(\cL^2(\alpha_0,p)\widehat{\xi}(p))_n=\alpha^2_0h(p+\imath\omega n)
\left\{h\big(p+\imath\omega(n+1)\big)\widehat{\xi}_{n+2}(p)+\right.\\
\left.-\left[h\big(p+\imath\omega(n+1)\big)+h\big(p+\imath\omega(n-1)\big)\right]\widehat{\xi}_n(p))+h\big(p+\imath\omega(n-1)\big)\widehat{\xi}_{n-2}(p)\right\},
\end{multline}
for every $n\in\Z$. Now, by Taylor expansion, we have that, for $|\alpha_0|$ small,
\begin{equation}
\label{eq:log-expansion}
\log(p_{\mathrm{s}}+\alpha^2_0 z+\imath\omega n)=\log(p_{\mathrm{s}}+\imath\omega n)+\frac{\alpha_0^2z}{p_{\mathrm{s}}+\imath\omega n}+\mathcal{O}(\alpha_0^4),\qquad\forall n\in\Z\,.
\end{equation} 
Note that \eqref{eq:nonres} guarantees that $p_{\mathrm{s}}+\imath\omega n\neq0$, so that above expansion actually makes sense.
As a consequence,
\begin{multline}
\label{eq:h-expansion}
\displaystyle
\alpha_0^2h\big(p_{\mathrm{s}}+\alpha_0^2z+\imath\omega n\big)h\big(p_{\mathrm{s}}+\alpha_0^2z+\imath\omega (n+1)\big)\\
=\frac{-4\pi^2\alpha_0^2}{\bigg(\log\left(\frac{p_{\mathrm{s}}+\imath\omega n}{p_{\mathrm{s}}}\right)+\frac{\alpha_0^2z}{p_{\mathrm{s}}+\imath\omega n}+\mathcal{O}(\alpha_0^4)\bigg)\bigg(\log\left(\frac{p_{\mathrm{s}}+\imath\omega (n+1)}{p_{\mathrm{s}}}\right)+\frac{\alpha_0^2z}{p_{\mathrm{s}}+\imath\omega (n+1)}+\mathcal{O}(\alpha_0^4)\bigg)}.
\end{multline}
Hence, after some computations, one can check that there exists a constant $C>0$ such that, if $|z|>C$ and $\alpha_0$ is sufficiently small, then
\begin{equation}\label{eq:coeff}
 |\alpha_0^2h\big(p_{\mathrm{s}}+\alpha_0^2z+\imath\omega n\big)h\big(p_{\mathrm{s}}+\alpha_0^2z+\imath\omega (n+1)\big)|\leq1/8,\qquad\forall n\in\Z\,.
\end{equation}
In particular, the lower bound on $\vert z\vert$ is needed to deal with the case $n=0$, in order to control the denominator in the r.h.s. of \eqref{eq:coeff}.
whence, combining with \eqref{eq:L^2}, there results that
\[
\Vert \cL^2(\alpha_0,p_{\mathrm{s}}+\alpha^2_0z)\Vert_{\ell^2\to\ell^2}<\frac{1}{2}.
\]
Finally, as this entails $\text{I}-\cL^2(\alpha_0,p_{\mathrm{s}}+\alpha^2_0z)$ to be invertible, equation \eqref{eq-L^2} has the sole trivial solution for every $p\in\{p\in\S_\ell(\omega):|p-p_{\mathrm{s}}|\geq C\alpha^2_0\}$ and the same holds for \eqref{eq:homogeneous_eq}.
\end{proof}
\begin{rem}
Observe that, as shown by the previous proof, the no-resonance assumption \eqref{eq:nonres} is used also in the study of poles of $\widehat q$ and not only in the analysis of branch points on the imaginary axis.
\end{rem}

\noindent We have now all the tools required for the proof of Theorem \ref{teo:poles_smallalpha}.

\begin{proof}[Proof of Theorem \ref{teo:poles_smallalpha}]
Assume that $p\in\S_\ell(\omega)$ is a pole of $(\ID-\cL(\alpha_0,p))^{-1}$, namely that \eqref{eq:homogeneous_eq} admits a nontrivial solution at $p$. By Proposition \ref{pro:distance_poles}, we have that
\begin{equation}
 \label{eq-decomposition}
 p=p_{\mathrm{s}}+\alpha_0^2z,\qquad\text{with}\quad|z|\leq C,
\end{equation}
for some suitable $C>0$. Moreover, we know that $\rho(p,\alpha_0),\,\Omega(p,\alpha_0)$, defined by \eqref{eq:rho}-\eqref{eq:Omega}, are well-defined and non vanishing, and solve \eqref{eq:eq_rho}-\eqref{eq:eq_Omega}, respectively.

On the other hand, by definition, these two functions have to satisfy
\begin{equation}
\label{eq:poles-equation}
\displaystyle
\frac{1}{\rho(p,\alpha_0)}-\Omega(p,\alpha_0)=0.
\end{equation}
However, Lemma \ref{lem:contraction}, Corollary \ref{cor:extension_rhoOmega} and \eqref{eq:rho}, \eqref{eq:Omega} guarantee that the solutions of \eqref{eq:eq_rho}-\eqref{eq:eq_Omega} are uniquely determined and have some specific features. In the following, we discuss at which points of the form \eqref{eq-decomposition}, these functions may satisfy \eqref{eq:poles-equation}.

Define, therefore, the function 
\[
F(z,\alpha_0):=\frac{1}{\alpha_0}\left(\frac{1}{\rho(p_{\mathrm{s}}+\alpha^2_0z,\alpha_0)}-\Omega(p_{\mathrm{s}}+\alpha^2_0z,\alpha_0)\right),\qquad \vert z\vert \leq C, \quad0<\vert\alpha_0\vert<\delta\ll1,
\]
so that \eqref{eq:poles-equation} is equivalent to
 \begin{equation}
 \label{eq-equiv}
 F(z,\alpha_0)=0.
 \end{equation}
Using \eqref{eq:eq_rho} and \eqref{eq:eq_Omega} and the definition of $p_{\mathrm{s}}$, we find
\[
\rho(p_{\mathrm{s}}+\alpha^2_0z)=\alpha_0h(p_{\mathrm{s}}+\alpha^2_0z)+\mathcal{O}(\alpha_0^3)=\frac{2\pi\imath}{\frac{\alpha_0z}{p_{\mathrm{s}}}+\mathcal{O}(\alpha^3_0)}+\mathcal{O}(\alpha_0^3),
\]
so that
\[
\frac{1}{\rho(p_{\mathrm{s}}+\alpha^2_0z,\alpha_0)}=\frac{\alpha_0z}{2p_{\mathrm{s}}\pi\imath}+\mathcal{O}(\alpha^3_0),
\]
and
\[
\begin{split}
\Omega(p_{\mathrm{s}}+\alpha^2_0z,\alpha_0)&=-\alpha_0h(p_{\mathrm{s}}+\alpha^2_0z-\imath\omega)+\mathcal{O}(\alpha^3_0) \\
& = - \frac{2\pi\imath\alpha_0}{\log(p_{\mathrm{s}}-\imath\omega)+4\pi\beta_0+\mathcal{O}(\alpha^2_0)}+\mathcal{O}(\alpha^3_0).
\end{split}
\]
As a consequence
\begin{equation}
\label{eq:F-expansion}
F(z,\alpha_0)=\frac{z}{2p_{\mathrm{s}}\pi\imath}+\frac{2\pi\imath}{\log(p_{\mathrm{s}}-\imath\omega)+4\pi\beta_0+\mathcal{O}(\alpha^2_0)}+\mathcal{O}(\alpha_0^2).
\end{equation}
Thus the unique solution of \eqref{eq-equiv} for $\alpha_0=0$ is given by
\[
 \overline{z}=\left\{
 \begin{array}{ll}
 \displaystyle \frac{4\imath\pi^2 e^{2(\log2-\gamma)}}{\log\big(e^{2(\log2-\gamma)}-\omega\big)-2(\log2-\gamma)}, & \text{if}\quad \omega<e^{2(\log2-\gamma)},\\[.6cm]
 \displaystyle -\frac{2\pi^3e^{2(\log2-\gamma)}}{\pi^2+\Lambda(\omega)^2}+\imath\frac{2\pi^2\Lambda(\omega)e^{2(\log2-\gamma)}}{\pi^2+\Lambda(\omega)^2}, & \text{if}\quad \omega>e^{2(\log2-\gamma)},
 \end{array}
 \right.
\]
where
\begin{equation}
\Lambda(\omega):=\log(\omega-e^{2(\log2-\gamma)})-2(\log2-\gamma).
\end{equation} 
Now, as $F(\overline{z},0)=0$ and $\displaystyle \frac{\partial F}{\partial z}(\overline{z},0)=\frac{1}{2p_{\mathrm{s}}\pi\imath}\neq0$, the (analytic) implicit function theorem ensures that for some $\delta_0>0$ there exists an analytic function $(-\delta_0,\delta_0)\ni\alpha_0\mapsto z(\alpha_0)\in\IN{\S_\ell(\omega)}$ such that $z(0)=\overline{z}$ and
\[
F(z(\alpha_0),\alpha_0)=0,\qquad \forall\,0<\vert\alpha_0\vert<\delta_0.
\]
Then, claim follows letting $p_0:=p_{\mathrm{s}}+\alpha^2_0z(\alpha_0)$, for $\alpha_0\in(-\delta_0,\delta_0)\setminus\{0\}$.
\end{proof}


\subsubsection{Poles of $(\ID-\mathcal{L}(p))^{-1}$ for arbitrary $\alpha_0\neq0$}\label{subsec:poleslargealpha}

It is left to show that $(\ID-\mathcal{L}(\alpha_0,p))^{-1}$ possesses at most one simple pole in $\S_\ell(\omega)$, belonging to $\IN{\S_\ell(\omega)}$, for every $\alpha_0\neq0$.

Preliminarily, we establish the following result (whose immediate proof is omitted and retraces exactly that of \cite[Proposition 24]{CCL}).

\begin{pro}
\label{pro:neumann_series}
Assume that \eqref{eq:nonres} is satisfied. For every $A_0>0$ there exists $Q>0$ such that $(\ID-\cL(p,\alpha_0))^{-1}$ is invertible in $\ell^2(\Z)$ and
\[
(\ID-\cL(p,\alpha_0))^{-1}=\sum_{m\geq0}\cL^m(p,\alpha_0),
\] 
for all $(p,\alpha_0)\in\{(p,\alpha_0)\in\S_\ell(\omega)\times\mathbb{R}:0<\vert \alpha_0\vert<A_0,\:\re(p)<-Q\}$. In addition, as an operator valued function, $(\ID-\cL(p,\alpha_0))^{-1}$ is analytic on $\{(p,\alpha_0)\in\IN{\S_\ell(\omega)}\times\mathbb{R}: \alpha_0\in(-A_0,A_0),\:\re(p)<-Q\}$.
\end{pro}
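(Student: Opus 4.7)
The plan is to show that, on the prescribed region, the operator norm of $\cL(p,\alpha_0)$ on $\ell^2(\Z)$ is strictly less than $1$, so that the Neumann series converges in operator norm; the analyticity statement then follows automatically. To this end, decompose $\cL(p,\alpha_0)$ as the composition of the two shifts $S_{\pm}:(x_n)_n\mapsto(x_{n\pm1})_n$ (each of $\ell^2$-norm $1$) with the multiplication operator associated to the sequence
\[
 m_n(p,\alpha_0):=-\alpha_0\,h(p+\imath\omega n)=-\frac{2\pi\imath\alpha_0}{\log(p+\imath\omega n)+4\pi\beta_0},
\]
so that $\|\cL(p,\alpha_0)\|_{\ell^2\to\ell^2}\leq 2|\alpha_0|\sup_{n\in\Z}|h(p+\imath\omega n)|$.

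The next step is to make $\sup_n|h(p+\imath\omega n)|$ uniformly small by pushing $\re(p)$ far to the left. For $p\in\S_\ell(\omega)$ with $\re(p)<-Q$ one has $|p+\imath\omega n|\geq|\re(p)|\geq Q$ for every $n\in\Z$, and hence
\[
 \re\big(\log(p+\imath\omega n)+4\pi\beta_0\big)=\log|p+\imath\omega n|-2\log 2+2\gamma\geq \log Q-2\log 2+2\gamma,
\]
by the explicit form of $\beta_0$ in \eqref{beta0}. This yields
\[
 |h(p+\imath\omega n)|\leq\frac{2\pi}{\log Q-2\log 2+2\gamma},
\]
uniformly in $n\in\Z$. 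Given $A_0>0$, one can therefore choose $Q=Q(A_0)$ so large that $2|\alpha_0|\sup_n|h(p+\imath\omega n)|\leq 1/2$, say, for every $|\alpha_0|<A_0$ and $\re(p)<-Q$. This immediately implies that $\ID-\cL(p,\alpha_0)$ is invertible, with inverse given by the Neumann series $\sum_{m\geq0}\cL^m(p,\alpha_0)$ and norm at most $2$.

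Finally, for the analyticity statement we note that on $\IN{\S_\ell(\omega)}$ the logarithms $\log(p+\imath\omega n)$ avoid their branch cuts (as $\im(p)\in(0,\omega)$ keeps $p+\imath\omega n$ off the negative real axis), so each coefficient $m_n(p,\alpha_0)$ is jointly analytic in $(p,\alpha_0)$; by composition with bounded shifts, each iterate $\cL^m(p,\alpha_0)$ is an analytic operator-valued function, and the uniform-in-$(p,\alpha_0)$ bound $\|\cL^m\|\leq 2^{-m}$ from the previous paragraph gives norm convergence of the series on the stated set. Morera's theorem (applied componentwise) then transfers analyticity to the limit $(\ID-\cL(p,\alpha_0))^{-1}$. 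The only possible obstacle here is hidden in the control of $h(p+\imath\omega n)$ at large $|n|$, but since $\log|p+\imath\omega n|\to\infty$ as $|n|\to\infty$, this in fact only \emph{improves} the bound, so the estimate is genuinely uniform.
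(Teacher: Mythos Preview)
Your argument is correct and is precisely the standard Neumann-series estimate the paper has in mind: the paper omits the proof entirely, stating that it ``retraces exactly that of \cite[Proposition 24]{CCL}'', which is the same bound-the-multiplier-and-sum-the-series computation you carry out. Your explicit lower bound $\re\big(\log(p+\imath\omega n)+4\pi\beta_0\big)\geq\log Q-2\log 2+2\gamma$ is exactly the right ingredient, and the analyticity via uniform norm convergence is the expected conclusion.
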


It is also necessary to introduce a second auxiliary result. First note that the definition of $\cL(\alpha_0,p)$ given by \eqref{operator} can be extended to the whole complex plane. However, the function $p\mapsto\cL(\alpha_0,p)$ presents jump discontinuities at branch cuts in $B(\omega)$.

In addition, let $T:\ell^2(\Z)\to\ell^2(\Z)$ be the shift operator defined by
\[
 (Tu)_n:=u_{n-1},\quad\forall n\in\Z,\qquad\forall u=(u_n)_n\in\ell_2(\Z).
\]
A straightforward computation shows that 
\begin{equation}
\label{eq-period}
\mathcal{L}(\alpha_0,p+\imath\omega)=T^{-1}\mathcal{L}(\alpha_0,p)T,
\end{equation}
which allows to prove the following property.

\begin{lem}
\label{lem:periodic_operator}
Let $p\mapsto\mathcal{L}_2(\alpha_0,p)$ be the $\ell_2(\Z)$-operator valued function defined by 
\begin{equation}\label{eq:L2}
\mathcal{L}_2(\alpha_0,p):=T^{\frac{p}{\imath\omega}}\mathcal{L}(\alpha_0,p) T^{-\frac{p}{\imath\omega}}.
\end{equation}
Then, $\mathcal{L}_2(\alpha_0,p+\imath\omega)=\mathcal{L}_2(\alpha_0,p)$ and $\sigma(\mathcal{L}_2(\alpha_0,p))=\sigma(\mathcal{L}(\alpha_0,p))$, for every $p\in\C$.
\end{lem}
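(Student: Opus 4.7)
My plan is to establish the two claims separately; both reduce to elementary algebra once the functional calculus for fractional powers of the shift $T$ is in place.

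For the periodicity identity $\mathcal{L}_2(\alpha_0,p+\imath\omega)=\mathcal{L}_2(\alpha_0,p)$, I would substitute $p+\imath\omega$ into \eqref{eq:L2}, obtaining
\[
\mathcal{L}_2(\alpha_0,p+\imath\omega)=T^{\frac{p}{\imath\omega}+1}\mathcal{L}(\alpha_0,p+\imath\omega)T^{-\frac{p}{\imath\omega}-1},
\]
and then invoke \eqref{eq-period} to rewrite $\mathcal{L}(\alpha_0,p+\imath\omega)=T^{-1}\mathcal{L}(\alpha_0,p)T$. The extra factors of $T^{\pm 1}$ coming from the shift of the exponent and from \eqref{eq-period} cancel in pairs, leaving precisely $T^{p/(\imath\omega)}\mathcal{L}(\alpha_0,p)T^{-p/(\imath\omega)}=\mathcal{L}_2(\alpha_0,p)$. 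This amounts to the exponent rule $T^{z_1+z_2}=T^{z_1}T^{z_2}$, which is automatic from the functional-calculus definition described below.

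For the spectral identity $\sigma(\mathcal{L}_2(\alpha_0,p))=\sigma(\mathcal{L}(\alpha_0,p))$, the two operators are similar by construction, so the claim reduces to verifying that the conjugating operator $T^{p/(\imath\omega)}$ is a bounded bijection of $\ell_2(\Z)$ with bounded inverse $T^{-p/(\imath\omega)}$. Since $T$ is unitary, the Fourier isometry $\mathcal{F}\colon\ell_2(\Z)\to L^2([0,2\pi])$ diagonalises it as multiplication by $e^{\imath\theta}$, so one can define $T^z:=\mathcal{F}^{-1}M_{e^{\imath z\theta}}\mathcal{F}$ for any $z\in\C$. For $z=a+\imath b$ the multiplier $e^{\imath z\theta}$ has modulus $e^{-b\theta}$ on $\theta\in[0,2\pi]$, hence $T^z$ is bounded with norm at most $\max(1,e^{-2\pi b})$ and $T^{-z}$ as a bounded inverse; similarity then preserves the spectrum.

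The main (and rather mild) obstacle is the rigorous interpretation of $T^{p/(\imath\omega)}$ for non-integer complex exponent, which requires a choice of branch of the logarithm. Once the branch is fixed by the Fourier representation above, both the exponent rule needed for the periodicity claim and the boundedness of $T^{\pm p/(\imath\omega)}$ needed for the spectral claim follow automatically, and the two identities hold for every $p\in\C$ as required.
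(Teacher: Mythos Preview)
Your proof is correct and follows essentially the same approach as the paper: define $T^{p/(\imath\omega)}$ via the functional calculus for the unitary operator $T$, obtain the periodicity from \eqref{eq-period} and the exponent law, and deduce isospectrality from invariance of the spectrum under conjugation by bounded invertible operators. You supply more detail than the paper (the explicit Fourier realisation, the norm bound for $T^z$ with complex $z$, and the cancellation of the $T^{\pm1}$ factors), but the logical skeleton is identical.
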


\begin{proof}
 The proof is the same of \cite[Lemma 21]{CCL}. We show it for the sake of completeness. As $T$ is unitary, $T^{\frac{p}{\imath\omega}}$ and $T^{-\frac{p}{\imath\omega}}$ are well defined by holomorphic functional calculus. Thus the operators $\mathcal L_2(\alpha_0,p)$ form an holomorphic family, with respect to the parameter $p$. In addition, a straightforward computation yields the $\imath\omega$-periodicity. Finally, isospectrality descends by the fact that the spectrum of an operator is invariant under conjugation by invertible operators (as conjugation preserves spectral measures).
\end{proof}

\begin{rem}
 Note that, the $\imath\omega$-periodicity implies that $\cL_2(\alpha_0,\cdot)$ does not present discontinuities at $B(\omega)$. Moreover, isospectrality implies that poles of $(\ID-\cL(\alpha_0,p))^{-1}$ coincides with those of $(\ID-\cL_2(\alpha_0,p))^{-1}$.
\end{rem}

Now, we have all the ingredients to determine the location of possible poles for arbitrary $\alpha_0\neq0$. The proof is analogous to that of \cite[Proof of Theorem 19]{CCL}. We report it for the sake of completeness.

\begin{teo}
\label{teo:poles_nonzeroalpha}
Assume that \eqref{eq:nonres} is satisfied and let $\alpha_0\in\R\setminus\{0\}$. Then, $(\ID-\mathcal{L}(\alpha_0,p))^{-1}$ presents at most one pole $p_0=p_0(\alpha_0,\omega)$ on $\S_\ell(\omega)$. In particular, if such $p_0$ does exists, then it actually belongs to $\IN{\S_\ell(\omega)}$. As a consequence, $\widehat{q}$ can be extended to the open left half-plane, up to at most a sequence of poles defined as in \eqref{eq-expansion}.
\end{teo}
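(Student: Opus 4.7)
The plan is to follow the strategy of \cite[Proof of Theorem 19]{CCL}, adapting it to the two-dimensional setting by combining the three tools just assembled: Proposition \ref{pro:neumann_series} (absence of poles far to the left), Lemma \ref{lem:periodic_operator} (the $\imath\omega$-periodic isospectral replacement $\cL_2(\alpha_0,p)$, which is free of the jump discontinuities that affect $\cL(\alpha_0,p)$ at $B(\omega)$), and Theorem \ref{teo:poles_smallalpha} (the desired conclusion in the regime $0<|\alpha_0|<A_0$).

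First, fix $\alpha_0\in\R\setminus\{0\}$ and work with $\cL_2(\alpha_0,\cdot)$ rather than $\cL(\alpha_0,\cdot)$, which is legitimate by the isospectrality statement of Lemma \ref{lem:periodic_operator}. By the $\imath\omega$-periodicity in $p$, it suffices to locate the poles of $(\ID-\cL_2(\alpha_0,p))^{-1}$ in the horizontal strip $\S_\ell(\omega)$. Combining Proposition \ref{pro:neumann_series} with isospectrality, there exists $Q=Q(\alpha_0)>0$ such that $(\ID-\cL_2(\alpha_0,p))^{-1}$ is analytic on $\{\re(p)<-Q\}$, so that all the poles lie in the bounded rectangle $R_{\alpha_0}:=\{p:-Q\leq\re(p)<0,\,0\leq\im(p)<\omega\}$. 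Since $\cL_2(\alpha_0,\cdot)$ is an analytic compact-operator-valued function on $\IN{\S_\ell(\omega)}$ (no branch-cut jumps are present thanks to the conjugation in \eqref{eq:L2}), the analytic Fredholm alternative implies that the set of poles is discrete in $\IN{\S_\ell(\omega)}$, hence finite inside $R_{\alpha_0}$.

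Second, I would upgrade this to the uniqueness statement by a continuation-in-$\alpha_0$ argument. The map $(p,\alpha_0)\mapsto\cL_2(\alpha_0,p)$ is jointly analytic, so the zero set of a suitable regularized Fredholm determinant of $\ID-\cL_2(\alpha_0,p)$ is an analytic variety in $\IN{\S_\ell(\omega)}\times(\R\setminus\{0\})$. Denoting by $N(\alpha_0)$ the number of poles in $R_{\alpha_0}$, the variety structure implies that $N$ is locally constant in $\alpha_0$ except when a pole exits $R_{\alpha_0}$. Exits through the left edge $\re(p)=-Q$ are ruled out by Proposition \ref{pro:neumann_series} (the constant $Q$ can be chosen uniformly on compact sets of $\alpha_0$); exits through the imaginary axis are excluded by Propositions \ref{profondo rosso}-\ref{tenebre}, which forbid nontrivial kernel elements on $\imath\R\cap\S(\omega)$; and exits through the horizontal edges $\im(p)=0,\omega$ are harmless thanks to the $\imath\omega$-periodicity established in Lemma \ref{lem:periodic_operator}, which identifies them. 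Hence $N$ is constant on the whole of $(0,+\infty)$ and, by symmetry, on $(-\infty,0)$. Since Theorem \ref{teo:poles_smallalpha} forces $N(\alpha_0)\leq1$ for $|\alpha_0|<A_0$, the inequality $N(\alpha_0)\leq1$ propagates to every $\alpha_0\in\R\setminus\{0\}$; the fact that the unique pole, if it exists, actually lies in $\IN{\S_\ell(\omega)}$ is a further consequence of Propositions \ref{profondo rosso}-\ref{tenebre}. The extension statement for $\widehat{q}$ with at most countably many poles of the form \eqref{eq-expansion} then follows from \eqref{eq-qn} and the $\imath\omega$-periodicity of the spectrum.

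The main obstacle will be making the continuation-in-$\alpha_0$ argument truly rigorous: a pole of $(\ID-\cL_2(\alpha_0,p))^{-1}$ could in principle split, coalesce with another pole, or change multiplicity as $\alpha_0$ varies, so the local constancy of $N(\alpha_0)$ must be argued through the analytic-variety structure of the zero set of the regularized Fredholm determinant, rather than by a naive counting argument. This is precisely the subtlety handled in the one-dimensional analogue in \cite{CCL}, and here the $\imath\omega$-periodic framework of Lemma \ref{lem:periodic_operator} is what allows the same machinery to be applied to the two-dimensional problem without the spurious boundary effects arising from the logarithmic branch cuts.
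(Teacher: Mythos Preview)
Your plan matches the paper's proof in overall structure: pass to $\cL_2$ via Lemma \ref{lem:periodic_operator}, confine all possible poles to a bounded rectangle by Proposition \ref{pro:neumann_series}, count them, and show the count is constant in $\alpha_0$ so that the bound from Theorem \ref{teo:poles_smallalpha} propagates to every $\alpha_0\neq0$.

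The one place where the paper is more concrete, and where your sketch has a real technical gap, is the counting device. Since $b_n(p)\sim 1/\log|n|$, the operator $\cL(\alpha_0,p)$ (and hence $\cL_2(\alpha_0,p)$) is compact but \emph{not} Hilbert--Schmidt, so no standard regularized Fredholm determinant is available to you. The paper sidesteps this exactly as in \cite{CCL}: one fixes a finite-rank approximation $F_k$ uniformly close to $\cL_2$ on the rectangle, writes
\[
\ID-\cL_2(\alpha_0,p)=\big(\ID-F(\alpha_0,p)\big)\big(\ID-(\cL_2(\alpha_0,p)-F_k(\alpha_0,p))\big),
\]
with $F:=F_k(\ID-(\cL_2-F_k))^{-1}$ of finite rank, and then works with the genuine finite-dimensional determinant $f(\alpha_0,p):=\det\big(P_k(\ID-F(\alpha_0,p))P_k\big)$. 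The zero counter
\[
Z(\alpha_0)=\frac{1}{2\pi\imath}\int_{\partial^+\S_b(\varepsilon,\omega)}\frac{\partial_p f(\alpha_0,p)}{f(\alpha_0,p)}\,\mathrm{d}p
\]
is then manifestly continuous in $\alpha_0$ and integer-valued, hence constant; the horizontal boundary contributions cancel by the $\imath\omega$-periodicity of $\cL_2$, and the left vertical side is pushed to $-\infty$ via Proposition \ref{pro:neumann_series}. This single contour-integral argument replaces both your ``analytic-variety'' step and your boundary-exit bookkeeping, and it automatically accounts for multiplicity, which is precisely the subtlety you flagged in your last paragraph.
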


\begin{proof}
From Lemma \ref{lem:periodic_operator}, it suffices to prove the claim for $(\text{I}-\cL_2(\alpha_0,p))^{-1}$. Fix $p$ in the open left half-plane. Being conjugated to $\cL(\alpha_0,p)$, $\cL_2(\alpha_0,p)$ is a compact operator on $\ell^2(\Z)$. Then, there exist a sequence $(F_k(p,\alpha_0))_k$ of finite rank operators converging in the $\ell^2(\Z)$-operator topology to $\cL_2$, and a sequence $(P_k)_k$ the projectors on the range of $F_k(p,\alpha_0)$. Notice that a finite rank approximation of $\mathcal L$ can be obtained as in the proof of Prop. \ref{profondo rosso}. Then $F_k$ is obtained by conjugation as in \eqref{eq:L2}.

Now, let $A_0>0$ and take $Q=Q(\alpha_0)$ as in Proposition \ref{pro:neumann_series}, so that $(I-\cL_2(p,\alpha_0))$ is invertible in $\{(p,\alpha_0)\in\S_\ell(\omega)\times\mathbb{R}:0<\vert \alpha_0\vert<A_0,\:\re(p)<-Q\}$. Now, take $\varepsilon>0$ such that there exists $|\alpha_0'|$ so that the possible pole provided by Theorem \ref{teo:poles_smallalpha} has real part smaller that $-\varepsilon$. Take also $k_0=k_0(A_0)$ such that for $|\alpha_0|\in(0,A_0)$ and $p\in \S_b$, with
\[
\S_b(\varepsilon,\omega):=\{z\in\C:\re(z)\in(-Q,-\varepsilon),\,\im(p)\in[0,\omega]\}\subset\S_\ell(\omega),
\]
there holds $\Vert \cL_2(\alpha_0,p) - F_k(p,\alpha_0)\Vert_{\ell_2(\Z)\to\ell_2(\Z)}<\varepsilon$, for every $k>k_0$. The identity,
\[
(\ID-\cL_2(\alpha_0,p))=(\ID-F(p,\alpha_0))(\ID-(\cL_2(\alpha_0,p)-F_k(\alpha_0,p))),
\]
with
\begin{equation}\label{eq:F}
 F(\alpha_0,p):=F_k(\alpha_0,p)[\ID-(\cL_2(\alpha_0,p)-F_k(\alpha_0,p))]^{-1}
\end{equation}
 gives that $\ID-\cL_2(\alpha_0,p)$ is invertible if and only if $\ID-F(\alpha_0,p)$ is invertible. Observe that $F(\alpha_0,p)$ is of finite rank and the Fredholm alternative implies that $\ID-F(\alpha_0,p)$ is not invertible if and only if $u=F(\alpha_0,p)u$ has a non trivial solution. Moreover, since $F(\alpha_0,p)=P_kF(\alpha_0,p)$, if there exists a non trivial $u$ such that $u=F(\alpha_0,p)u$, then $u=P_ku$. Thus, $(\ID-\cL_2(\alpha_0,p))^{-1}$ has a pole in $\S_b(\varepsilon,\omega)$ if and only if 
 \begin{equation}\label{eq:f}
 f(\alpha_0,p):=\det(P_k(\ID-F(\alpha_0,p))P_k)=0\,.
 \end{equation}

We know from Theorem \ref{teo:poles_smallalpha} that, for small $|\alpha_0|$, $f$ may have one simple zero in $\IN{\S_b(\varepsilon,\omega)}$. Therefore,
\begin{equation}
\label{eq-Z}
 Z(\alpha_0):=\frac{1}{2\pi\imath}\int_{\partial^+\S_b(\varepsilon,\omega)}\frac{\partial_pf(\alpha_0,p)}{f(\alpha_0,p)}\dP
\end{equation}
is at most equal to 1 for small $|\alpha_0|$. In order to conclude, it is sufficient to prove that $Z$ is at most 1 for every $|\alpha_0|\in(0,A_0)$.

However, arguing as in \cite[Lemma 20]{CCL} and using the $\imath\omega$-periodicity of $\cL_2(\alpha_0,p)$, the previous integral can be restricted to the vertical sides. Moreover, Proposition \ref{pro:neumann_series} implies that, if one moves leftwards the left vertical side of $\partial\S_b(\varepsilon,\omega)$, then the integral in \eqref{eq-Z} does not change. In addition, the contribution of this side vanishes as $\re(p)\to-\infty$. Hence,
\[
 Z(\alpha_0)=\frac{1}{2\pi\imath}\int_{-\varepsilon}^{-\varepsilon+\imath\omega}\frac{\partial_pf(\alpha_0,p)}{f(\alpha_0,p)}\dP.
\]
Observe that $Z$ is continuous in $\alpha_0\in\R$. Indeed, the coefficient in $\mathcal L(p,\alpha_0)$ depends linearly on $\alpha_0$, so that such operator is continuous in $\alpha_0$, as well as $\mathcal L_2$. Then the finite-rank approximations $F_k(\alpha_0,p)$ of the latter are also continuous in $\alpha_0$. Expanding in Neumann series the term $[\ID-(\cL_2(\alpha_0,p)-F_k(\alpha_0,p))]^{-1}$ in \eqref{eq:F}, one sees that $F(\alpha_0, p)$ is also continuous in $\alpha_0$. Consequently, the same holds for $f(\alpha_0,p)$ in \eqref{eq:f}. Moreover, by the previous discussion $f(\alpha_0,p)$ is given by the sum of terms where the dependence on $\alpha_0$ and $p$ is factorized, so that $\partial_p f(\alpha_0,p)$ is continuous in $\alpha_0$.
Finally, since $Z$ takes value in $\N$, it has to be constant, thus completing the proof.
\end{proof}

\begin{rem}
 We mention again that the extension of $\widehat{q}$ to the open left half-plane obtained in Theorem \ref{teo:poles_nonzeroalpha} is not analytic here up to the poles. Indeed, in $B(\omega)$ it presents jumps due to the logarithmic branch cuts. However, it does not present poles there.
\end{rem}

\begin{rem}
 \label{rem-monocrom}
 It is not clear, at the moment, how to adapt the whole technique developed and used in Section \ref{subsec:lefthalfplane} to multi-chromatic point perturbations. This motivates that the statements of Theorem \ref{teo:cion} presents the monochromatic assumption.
\end{rem}

\begin{rem}
 \label{rem-whynonres_bis}
 We also mention that the no-resonance assumption is used not only in Section \ref{subsec:inversion}, but also in Section \ref{subsec:lefthalfplane}, in analogy with the analysis performed in \cite{CCL} for the one-dimensional case.
\end{rem}


\section{Complete ionization: proof of Theorem \ref{teo:cion}}
\label{sec:ionization}

In this final section we present the proof Theorem \ref{teo:cion}, which is the main goal of the paper. Recalling \eqref{eq:survival} and \eqref{eq:theta} it is sufficient to study the decay of $Z$.

To this aim we will use information obtained in Section \ref{sec:ion}, to estimate $Z$ by its Laplace inverse transform, that is
\begin{equation}
\label{inverselap}
Z(t)=\frac{1}{2\pi\imath}\lim_{R\to\infty}\int_{-\imath\, R}^{\imath\, R}e^{pt}\widehat{Z}(p)\dP,
\end{equation}
where $\widehat{Z}$ denotes the Laplace transform of $Z$.

Before dealing with \eqref{inverselap}, a further preliminary result is needed.

\begin{lem}\label{lem:qinfinity}
Under the assumptions of Theorem \ref{teo:cion}, for every $a\in\R$ there holds
\begin{equation}
\label{eq:qinfinity}
\lim_{\tau\to+\infty}\widehat{q}(\imath a-\tau)=0
\end{equation}
\end{lem}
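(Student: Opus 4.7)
The plan is to exploit the operator formulation \eqref{Poe} in the far-left half-plane, where Proposition \ref{pro:neumann_series} provides a convergent Neumann series for $(\ID-\cL(\alpha_0,p))^{-1}$. First I would translate the problem from $\widehat{q}$ (as a single function on $\C$) into its sequence-valued counterpart $(\widehat{q}_n)_n\in\ell^2(\Z)$. Writing $a = \omega n_0 + c$ with $n_0\in\Z$ and $c\in[0,\omega)$, one has
\[
\imath a - \tau = p_\tau + \imath\omega n_0,\qquad p_\tau := -\tau + \imath c \in\mathcal{S}_\ell(\omega),
\]
so that by \eqref{eq-qn} $\widehat{q}(\imath a - \tau) = \widehat{q}_{n_0}(p_\tau)$. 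It will thus suffice to prove the stronger statement $\|\widehat{q}(p_\tau)\|_{\ell^2(\Z)}\to 0$ as $\tau\to+\infty$.

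For $\tau$ sufficiently large, Proposition \ref{pro:neumann_series} yields
\[
\widehat{q}(p_\tau)=(\ID-\cL(\alpha_0,p_\tau))^{-1}\widehat{g}(p_\tau),\qquad \|\widehat{q}(p_\tau)\|_{\ell^2(\Z)} \leq \frac{\|\widehat{g}(p_\tau)\|_{\ell^2(\Z)}}{1-\|\cL(\alpha_0,p_\tau)\|_{\ell^2(\Z)\to\ell^2(\Z)}},
\]
so everything reduces to showing that the numerator vanishes while the denominator stays bounded away from zero. Factoring $\cL(\alpha_0,p)$ as the composition of the two (unitary) shifts with the multiplication operator of symbol $b_n(p):=-2\pi\imath\alpha_0/(\log(p+\imath\omega n)+4\pi\beta_0)$ gives $\|\cL(\alpha_0,p)\|_{\ell^2(\Z)\to\ell^2(\Z)}\leq 2\sup_{n\in\Z}|b_n(p)|$; and since $|p_\tau+\imath\omega n|\geq\tau$ uniformly in $n$, the elementary bound $|\log(p_\tau+\imath\omega n)+4\pi\beta_0|\geq\log\tau-C$ yields $\sup_n|b_n(p_\tau)|=O(1/\log\tau)\to 0$.

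It then remains to show that $\|\widehat{g}(p_\tau)\|_{\ell^2(\Z)}\to 0$, which is the main technical point. From the explicit expression \eqref{operator} of $\widehat{g}_n$, the logarithmic lower bound above in the denominator, together with $|\log(p_\tau+\imath\omega n)|\leq C(1+\log(\tau+\omega|n|))$ in the numerator and the estimate $|p_\tau+\imath\omega n+\imath\lambda_\alpha|^2\geq\tau^2+(c+\omega n+\lambda_\alpha)^2$, produces a pointwise control on $|\widehat{g}_n(p_\tau)|$. Splitting the $\ell^2$-sum according to $|n|\leq\tau/\omega$ versus $|n|>\tau/\omega$ and estimating each regime separately is then expected to yield $\|\widehat{g}(p_\tau)\|_{\ell^2(\Z)}^2=O(1/\tau)$. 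Substituting both bounds into the Neumann estimate gives $\|\widehat{q}(p_\tau)\|_{\ell^2(\Z)}\to 0$, hence the claim. The main obstacle is precisely this $\ell^2$ book-keeping: one must balance the mild logarithmic growth in the numerator of $\widehat{g}_n$ against the linear decay in $|n|$ coming from the pole $p+\imath\omega n+\imath\lambda_\alpha$, and verify that both the bulk and the tail of the sum are of order $1/\tau$.
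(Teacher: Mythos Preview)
Your argument is correct and in fact proves the stronger statement $\|\widehat{q}(p_\tau)\|_{\ell^2(\Z)}\to 0$; the $\ell^2$ book-keeping you flag as the main obstacle goes through exactly as you outline, since the logarithmic factors in the numerator and denominator of $\widehat{g}_n$ essentially cancel and one is left with $|\widehat{g}_n(p_\tau)|\lesssim (\tau^2+\omega^2 n^2)^{-1/2}$, whose $\ell^2$-sum is $O(1/\tau)$. The paper takes a slightly different route: it only claims that $\|\widehat{g}(p_\tau)\|_{\ell^2(\Z)}$ is \emph{bounded} in $\tau$ (thereby avoiding your splitting computation), deduces that $\|\widehat{q}(p_\tau)\|_{\ell^2(\Z)}$ is bounded, and then plugs this uniform componentwise bound back into the recursion \eqref{suspiria} to conclude that each $\widehat{q}_n(p_\tau)\to 0$, using that both the coefficient $h(p_\tau+\imath\omega n)$ and the forcing $\widehat{g}_n(p_\tau)$ vanish pointwise as $\tau\to+\infty$. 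So the trade-off is: the paper saves the careful $\ell^2$ estimate at the cost of an extra step through the equation, while your approach does the estimate once and gets the full $\ell^2$ decay in one stroke.
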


\begin{proof}
As in the previous section, it is convenient to use the $\ell_2$ sequence form for the function $\widehat{q}$ and focus on $a\in[0,\omega]$. Then, given $a\in[0,\omega]$, from Theorem \ref{teo:poles_nonzeroalpha}, for every $\tau>0$ (up to at most one point) there results
\[
\widehat{q}(\imath a-\tau)=(I-\cL(\imath a-\tau))^{-1}\widehat{g}(\imath a-\tau).
\]
One sees that $\|\cL(\imath a-\tau)\|_{\ell_2(\Z)\to\ell_2(\Z)}\to0$, as $\tau\to+\infty$, so that $\Vert (I-\cL(\imath a-\tau))^{-1}\Vert_{\ell_2(\Z)\to\ell_2(\Z)}$ is uniformly bounded for large $\tau$. Moreover, is is easy to see that also $\Vert \widehat{g}(\imath a-\tau)\Vert_{\ell^2(\Z)}$ is uniformly bounded for $\tau$ large enough.
Hence
\[
\Vert \widehat{q}(\imath a-\tau)\Vert^2_{\ell^2(\Z)}:=\sum_{n\in\Z}\vert \widehat{q}_n(\imath a-\tau)\vert^2\leq M,\qquad \forall \tau\geq T,
\]
for some suitable $M,\,T>0$, so that (as a single function)
\begin{equation}
 \label{eq-qbound}
 \vert \widehat{q}(\imath a-\tau+\imath \omega n)\vert\leq M,\qquad\forall n\in\Z,\quad\forall\tau>T\,.
\end{equation}
 Finally, \eqref{eq:qinfinity} follows from \eqref{suspiria} in view of \eqref{eq-qbound}.
\end{proof}

\begin{rem}
 Clearly, at the branch cuts \eqref{eq:qinfinity} has to be meant as valid for both the approximation from above and for the approximation from below.
\end{rem}

\begin{figure}[htbp]
        \centering
        \includegraphics[width=14cm]{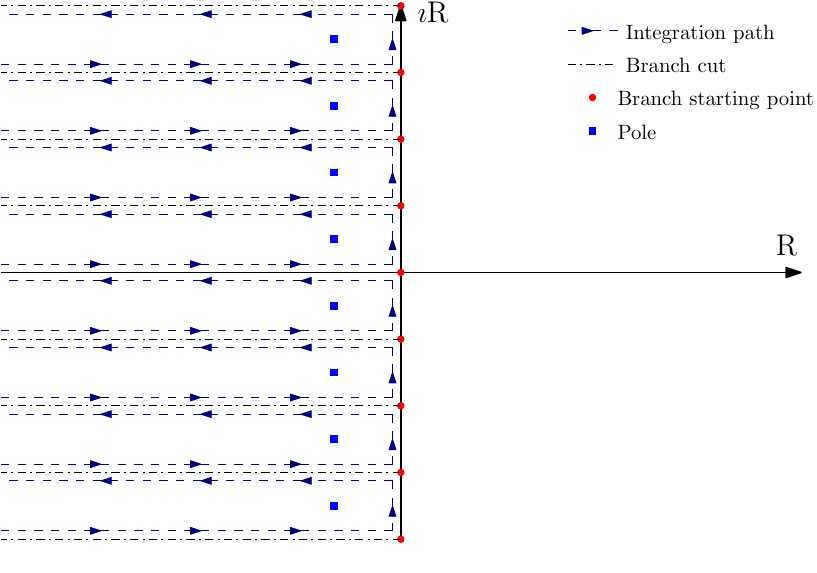}
        \caption{Dashed lines correspond the integration path. Branch points are indicated with red dots, while blue square dots represent simple poles.}
        \label{path}
\end{figure}

\begin{proof}[Proof of Theorem \ref{teo:cion}]
We choose the integration path depicted in Figure \ref{path} (as in \cite{CDFM,CCLR}). Using \eqref{eq:zeta_tr}, the properties of $\widehat{q}$ established in Sections \ref{subsec:inversion}-\ref{subsec:lefthalfplane} and Lemma \ref{eq:qinfinity}, the right-hand side of \eqref{inverselap} can be rewritten as 
\begin{equation}\label{eq:decayingterms}
\begin{split}
&\lim_{R\to\infty}\int_{-\imath\, R}^{\imath\, R}e^{p\,t}\widehat{Z}(p)\,dp=\underbrace{2\pi\imath\sum_{n\in\Z}\widehat{Z}_2(p_n)R_ne^{p_nt}}_{=:Y_1}+ \\
&+\underbrace{\sum_{n\in\Z}e^{\imath \omega nt}\int^{+\infty}_0 e^{-\tau\,t}[\widehat{Z}^+_2(-\tau+\imath\omega n)\widehat{q}^+(-\tau+\imath\omega n)-\widehat{Z}^-_2(-\tau+\imath\omega n)\widehat{q}^-(-\tau+\imath\omega n)]\,d\tau}_{=:Y_2}
\end{split}
\end{equation}
where $\beta_n=\imath\omega n$ are the branch point obtained in Section \ref{subsec:inversion},
\begin{gather*}
 \widehat{q}^\pm(-\tau+\imath\omega n):=\lim_{\varepsilon\to0}\widehat{q}(-\tau+\imath\omega n\pm\imath\varepsilon)\\
 \widehat{Z}^\pm_2(-\tau+\imath\omega n):=\lim_{\varepsilon\to0}\widehat{Z}_2(-\tau+\imath\omega n\pm\imath\varepsilon),
\end{gather*}
and $R_n:=\text{Res}_{p_n}(\widehat{q})$ are the residues at the possible poles $(p_n)_n$. Note that $Y_1$ is the contribution of the possible poles of $\widehat{q}(p)$, while $Y_2$ accounts for the branch cuts. We estimate the two terms separately.

\emph{Step (i): proof of $Y_2\sim \f{C_1+C_2\log t}{t}$, with $C_1,\,C_2\in\C$, as $t\to+\infty$}. To this aim, as a preliminary step, it is necessary to prove that 
\begin{equation}
\label{eq:sup_bounded} 
\sup_{n\in\Z\setminus\{0\}, \tau>0}\vert n\,\widehat{q}^\pm(\imath\omega n-\tau) \vert<\infty.
\end{equation}
We use again $\widehat{q}$ as a sequence of functions. From the previous section, we know that even if $\re(p)=-\tau<0$ and $\im(p)=0,\omega$, $\widehat{q}(p)=(\ID-\mathcal{L}(p))^{-1}\widehat{g}(p)$. Clearly, $\widehat{q}$ is not analytic here, but, by analytic continuation, $\widehat{q}(-\tau)$ and $\widehat{q}(-\tau+\imath\omega)$ are real analytic in $\tau$ and go to zero as $\tau\to+\infty$. In the sequel we focus on $\widehat{q}(-\tau)$, but for $\widehat{q}(-\tau+\imath\omega)$ the proof is analogous. Let, then, $n_0\in\N$ (a similar argument works for $n_0<0$) be such that $\sup_{n\geq n_0,\tau>0}\vert h(-\tau+\imath\omega n) \vert<1/2$. Note also that, by \eqref{eq-qbound}, $\sup_{\tau>0,n\in\Z}\vert \widehat{q}_n(-\tau) \vert\leq C<\infty$. Now, denoting by $P$ be the projector onto $\ell^2(n_0+\N)$, that is, on the sequences indexed from $n_0$ to $+\infty$, $P\widehat{q}$ has to solve
\begin{equation}
\label{eq:projected_eq}
P\widehat{q}(-\tau)=P\mathcal{L}(-\tau)P\widehat{q}(-\tau)+P\widehat{g}(-\tau)+R(-\tau),
\end{equation}
where the only non-zero component of $R(-\tau)$ is $R_{n_0}(-\tau)=\alpha_0h(-\tau+\beta_{n_0-1})\widehat{q}_{n_0-1}(-\tau)$. Therefore, to conclude the proof, it is sufficient to prove that \eqref{eq:projected_eq} has a unique solution in
\[
\widetilde{\ell}_\infty:=\{a=(a_n)_{n\geq n_0}: \Vert a \Vert_{\widetilde{\ell}_\infty}=\sup_{n\geq n_0}n\vert a_n\vert<\infty\}.
\]
First we observe that $g(-\tau)\in\widetilde{\ell}_\infty$, as can be easily seen from \eqref{operator}, and that $R(-\tau)\in\widetilde{\ell}_\infty$, as it has only one non-zero component. In addition, if $n_0$ is sufficiently large, then $\Vert P\mathcal{L}(-\tau)\Vert_{\widetilde{\ell}_\infty\to\widetilde{\ell}_\infty}<1/2$. Hence, \eqref{eq:projected_eq} is well-defined on $\widetilde{\ell}_\infty$ and $(\ID-P\mathcal{L}(-\tau))$ is invertible there, so that the equation admits a unique solution $(\ID-P\mathcal{L}(-\tau))^{-1}(P\widehat{g}(-\tau)+R(-\tau))\in \widetilde{\ell}_\infty\subset \ell^2(\Z)$. By uniqueness, such solution must coincide with $\widehat{q}(-\tau)$ and, thus, \eqref{eq:sup_bounded} follows.

We can now estimate $Y_2$. The function $\widehat{Z}_2(p)$ defined by \eqref{eq:laplaceZ2} is continuous across the half-lines $=-\tau+\imath\omega n$, $\tau\geq0$, for all $n\in\Z\setminus\{0\}$, while it presents a discontinuity across $(-\infty,0)$. Preliminarily, we can rewrite $Y_2$ as follows
\begin{multline*}
 Y_2=\underbrace{\int^{+\infty}_0 e^{-\tau\,t}[\widehat{Z}^+_2(-\tau)\widehat{q}^+(-\tau)-\widehat{Z}^-_2(-\tau)\widehat{q}^-(-\tau)]\,d\tau}_{=:I}+\\[.2cm]
 \underbrace{\sum_{n\in\Z\setminus\{0\}}e^{\imath \omega nt}\int^{+\infty}_0 e^{-\tau\,t}[\widehat{Z}^+_2(-\tau+\imath\omega n)\widehat{q}^+(-\tau+\imath\omega n)-\widehat{Z}^-_2(-\tau+\imath\omega n)\widehat{q}^-(-\tau+\imath\omega n)]\,d\tau}_{=:II}
\end{multline*}
Let us focus first on $II$. Combining this fact with \eqref{eq:sup_bounded}, we find that 
\[
|II|\lesssim  \int^{+\infty}_0 e^{-\tau\,t}\sum_{n\in\Z\setminus\{0\}}\frac{\vert\widehat{Z}_2(-\tau+\imath\omega n)\vert}{|n|}\,d\tau \\ 
\]
By \eqref{eq:laplaceZ2}, for $|n|$ large,
\[
\vert \widehat{Z}_2(-\tau+\imath\omega n)\vert\lesssim\frac{\log (\sqrt{\tau^2+n^2\omega^2})+\pi+\vert\log(-\lambda_0)\vert}{\sqrt{\tau^2+(n\omega+\lambda_0)^2}},\qquad \forall\tau>0.
\]
Furthermore, for $ n$ large, $\displaystyle \vert n\omega+\lambda_0\vert> \frac{\vert n\vert\omega}{2}$, so that $\tau^2+(n\omega+\lambda_0)^2>\frac{1}{4}(\tau^2+n^2\omega^2)$.  Thus,
\[
\vert \widehat{Z}_2(-\tau+\imath\omega n)\vert\lesssim\frac{\log (\sqrt{\tau^2+n^2\omega^2})+\pi+\vert\log(-\lambda_0)\vert}{\sqrt{\tau^2+n^2\omega^2}}=:f_n(\tau)\,,\qquad \forall\tau>0.
\]
Now, a straightforward computations yields 
\[
f'_n(\tau)=\frac{\tau}{(\tau^2+n^2+\omega^2)^{3/2}}(1-\log(\sqrt{\tau^2+n^2\omega^2})-\pi-\vert\log(-\lambda_0)\vert)
\]
so that
\[
f'_n(\tau)<0 \iff \log(\sqrt{\tau^2+n^2\omega^2})+\pi+\vert \log(-\lambda_0)\vert>1,
\]
and then, for $n$ large, $f_n(\cdot)$ is strictly decreasing. Thus,
 \[
 f_n(\tau)\leq f_n(0)=\frac{\log(|n|\omega)+\pi+\vert\log(-\lambda_0)\vert}{2\vert n\vert\omega},\qquad\forall \tau>0.
 \]
Consequently,
\[
\frac{\vert \widehat{Z}_2(-\tau+\imath\omega n)\vert}{|n|}\lesssim \frac{\log \vert n\vert}{n^2},\qquad\forall \tau>0
\]
which entails that $|II|\lesssim 1/t$, for every $t>0$. 
\medskip

It is then left to deal with $I$. Setting
\[
\log^-(-\tau):=\lim_{\varepsilon\to0^+}\log(-\tau-\imath\varepsilon)=\log(\tau)-\imath\pi,
\]
and
\[
\log^+(-\tau):=\lim_{\varepsilon\to0^+}\log(-\tau+\imath\varepsilon)=\log(\tau)+\imath\pi,
\]
by \eqref{eq:laplaceZ2}, \eqref{ilgattoanovecode}, \eqref{eq-taumenouno}, \eqref{eq-erremenouno}, \eqref{eq-acca} and \eqref{eq-cu} one obtains that, for $\tau>0$,
\[
\widehat Z^+_2(-\tau)\widehat q^+(-\tau)=\f{\imath\const_0\left[\log\big(\f{\tau}{-\la_0}\big)+\imath\f{\pi}{2}\right]}{4\pi(-\tau+\imath\la_0)(Q(-\tau)+\log\tau+\imath\pi)}\left[H(-\tau)+\f{\const_0\left[\log\big(\f{\tau}{-\la_0}\big)+\imath\f{\pi}{2}\right]}{-\tau+\imath\la_0}\right]
\]
and
\begin{multline*}
\widehat Z^-_2(-\tau)\widehat q^-(-\tau)=\f{\imath\const_0\left[\log\big(\f{\tau}{-\la_0}\big)-\imath\f{3\pi}{2}\right]}{4\pi(-\tau+\imath\la_0)}\:\times\\[.2cm]
\times\:\left\{\widehat{R}_{-1}(-\tau)+\f{\widehat{\tau}_{-1}(-\tau)}{Q(-\tau)+\log\tau-\imath\pi}\left[H(-\tau)+\f{\const_0\left[\log\big(\f{\tau}{-\la_0}\big)-\imath\f{3\pi}{2}\right]}{-\tau+\imath\la_0}\right]\right\},
\end{multline*}
so that
\begin{multline*}
\widehat Z^+_2\big(-\tf{y}{t}\big)\widehat q^+\big(-\tf{y}{t}\big)-\widehat Z^-_2\big(-\tf{y}{t}\big)\widehat q^-\big(-\tf{y}{t}\big)=\\[.2cm]
=\big(a(t,y)-b(t,y)\big)H\big(-\tf{y}{t}\big)+a(t,y)\f{\const_0\log\big(\f{y}{-\la_0}\big)+\imath\const_0\f{\pi}{2}}{-\f{y}{t}+\imath\la_0}-b(t,y)\f{\const_0\log\big(\f{y}{-\la_0}\big)-\imath\const_0\f{3\pi}{2}}{-\f{y}{t}+\imath\la_0}\\[.2cm]
+\log t\bigg[-c(t,y)\widehat{R}_{-1}\big(-\tf{y}{t}\big)+\f{\const_0}{-\f{y}{t}+\imath\la_0}\big(b(t,y)-a(t,y)\big)\bigg],
\end{multline*}
with
\[
 \begin{split}
  a(t,y)&:=\f{\imath\const_0\bigg[\f{\log\big(\f{y}{-\la_0}\big)+\imath\f{\pi}{2}}{\log t}-1\bigg]}{4\pi\big(-\f{y}{t}+\imath\la_0\big)\left(\f{Q\big(-\f{y}{t}\big)+\log y+\imath\pi}{\log t}-1\right)},\\[.2cm]
  b(t,y)&:=\f{\imath\const_0\widehat{\tau}_{-1}\big(-\f{y}{t}\big)\bigg[\f{\log\big(\f{y}{-\la_0}\big)-\imath\f{3\pi}{2}}{\log t}-1\bigg]}{4\pi\big(-\f{y}{t}+\imath\la_0\big)\left(\f{Q\big(-\f{y}{t}\big)+\log y-\imath\pi}{\log t}-1\right)},\\[.2cm]
  c(t,y)&:=\f{\imath\const_0\widehat{\tau}_{-1}\big(-\f{y}{t}\big)\bigg[\f{\log\big(\f{y}{-\la_0}\big)-\imath\f{3\pi}{2}}{\log t}-1\bigg]}{4\pi\big(-\f{y}{t}+\imath\la_0\big)}.
 \end{split}
\]
Now, since for a.e. $y>0$
\[
 \lim_{t\to+\infty}a(t,y)=\f{\const_0}{4\pi\la_0},\qquad \lim_{t\to+\infty}b(t,y)=\f{\const_0\widehat{\tau}_{-1}(0)}{4\pi\la_0},\qquad \lim_{t\to+\infty}c(t,y)=-\f{\const_0\widehat{\tau}_{-1}(0)}{4\pi\la_0}
\]
and since $a(t,\cdot),\,b(t,\cdot),\,c(t,\cdot),\,\widehat{\tau}_{-1}\big(-\tf{(\cdot)}{t}\big),\,\widehat{R}_{-1}\big(-\tf{(\cdot)}{t}\big),\,H\big(-\tf{(\cdot)}{t}\big),\,Q\big(-\tf{(\cdot)}{t}\big)$ are equibounded in $t$ (for $a(t,\cdot),\,b(t,\cdot),\,c(t,\cdot)$ is a direct computation, while for $\widehat{\tau}_{-1}\big(-\tf{(\cdot)}{t}\big),\,\widehat{R}_{-1}\big(-\tf{(\cdot)}{t}\big),\,H\big(-\tf{(\cdot)}{t}\big),\,Q\big(-\tf{(\cdot)}{t}\big)$ one can use \eqref{eq-taumenouno}, \eqref{eq-erremenouno}, \eqref{eq-acca} and \eqref{eq-cu} and argue as in the proof of Lemma \ref{lem:qinfinity}), by dominated convergence one finds that
\begin{multline*}
I=\frac{1}{t}\int^{+\infty}_0e^{-y}\left[\widehat Z^+_2\big(-\tf{y}{t}\big)\widehat q^+\big(-\tf{y}{t}\big)-\widehat Z^-_2\big(-\tf{y}{t}\big)\widehat q^-\big(-\tf{y}{t}\big)\right]\,dy\,\\[.2cm]
\sim\f{C_1+C_2\log t}{t},\qquad\text{as}\quad t\to+\infty,
\end{multline*}
with
\begin{gather}
 \label{eq-ciuno}C_1:=\f{\const_0}{4\pi\la_0}\bigg[\big(1-\widehat{\tau}_{-1}(0)\big)\bigg(H(0)-\const_0\f{\gamma+\log(-\la_0)}{\imath\la_0}\bigg)+\f{\pi\const_0}{2\la_0}\big(1-3\widehat{\tau}_{-1}(0)\big)\bigg]\\[.2cm]
 \label{eq-cidue}C_2:=\widehat{\tau}_{-1}(0)\widehat{R}_{-1}(0)+\f{\const_0}{\imath\la_0}\big(\widehat{\tau}_{-1}(0)-1\big).
 \end{gather}
Thus, the proof of the asymptotics is complete up to possibly redefining $C_1$.

\bigskip

\emph{Step (ii): proof of $|Y_1|\lesssim e^{-bt}$, for every $t>0$, for some $b>0$}. It is sufficient to prove that, the sequence of residues $(R_n)_n$, has a sufficiently fast decay for $n$ large, so that $(\widehat{Z}(p_n)R_n)_n\in\ell^1(\Z)$. Indeed, if this is the case, then the desired estimate follows since $\re(p_n)\leq -C_{\alpha_0}<0$.

Let $r>0$ be such that $B_{n,r}:=\{\vert p-p_n\vert\leq r\}\subset\subset\IN{\S_\ell(\omega)}$. Integrating \eqref{Poe} term by term on $\partial^+B_{n,r}$ and recalling that each $\widehat{g}_n$ is analytic on $B_{n,r}$, we obtain
\begin{equation}
\label{eq:residue_equation}
R_n=\cL(p_n)[R_{n+1}-R_{n-1}].
\end{equation}
Now, arguing as for \eqref{eq:sup_bounded}, we find that the residues satisfy
\begin{equation}
\label{eq:residues_decay}
\sup_{n\in \mathbb{Z}\setminus\{0\}}\vert n R_n \vert<+\infty\, \iff \vert R_n\vert\lesssim \frac{1}{\vert n\vert} \qquad \forall n\in\mathbb{Z}\setminus\{ 0\}\,.
\end{equation}
Hence, as by \eqref{eq:laplaceZ2},
\[
\vert Z_2(p_n)\vert\lesssim \frac{\log |n|}{|n|},\qquad\text{for}\quad |n|\quad\text{large},
\]
we get $(\widehat{Z}(p_n)R_n)_n\in\ell^1(\Z)$.
\end{proof}

\begin{rem}
 \label{rem-cost-zero}
 Note that, looking at \eqref{eq-ciuno} and \eqref{eq-cidue}, one can immediately see that it is not possible to exclude in general the vanishing of the two constants of the asymptotics. However, it is clear as well that such a vanishing may be, in some sense, `accidental' as it would correspond to very specific values of some parameters involved.
\end{rem}

 \appendix
\section{The case of general monochromatic perturbations}\label{appendix}
In this section we explain which changes are required along the paper, starting from Section \ref{subsec:Laplace}, to address the case of general $\eta, c\in\R$ in \eqref{eq:monochrom}.

We start observing that Propositions \ref{pro:q_lap} and \ref{pro:q_halfplane} are easily seen to hold in general (see also Remark \ref{rem:alpha}). Then, in \eqref{charge-alpha} the coefficient are given by
\[
\beta_0=\frac{\gamma}{2\pi}-\frac{\log 2}{2\pi}-\frac{\imath}{8}+c\,,\qquad \beta_{\pm 1}=\pm\frac{\imath\alpha_0}{2}e^{\pm\imath\eta}\,.
\]
Accordingly, the pole of the coefficients in \eqref{laplace-charge} is given by 
\[
p_{\mathrm s}=\imath e^{2(\log 2-\gamma)-4\pi c}\,.
\]
The operator defined in the first equation in \eqref{operator} now becomes
\begin{equation}\label{newoperator}
(\mathcal{L}(p)\widehat{q}(p))_{n}:=-\frac{2\pi\imath \alpha_0}{\log (p+\imath \omega n)+4\pi\,\beta_{0}}[e^{\imath\eta}\widehat{q}_{n+1}(p)-e^{-\imath\eta}\widehat{q}_{n-1}(p) ]\,.
\end{equation}
A direct inspection of the proofs of Propositions \ref{profondo rosso}, \ref{inferno}, \ref{tenebre} and \ref{nonhosonno} shows that they are unchanged, except for minor modifications. Namely,
\eqref{eq:series} changes to
\[
\sum_{n\in\Z}[\log(\xi+\omega n)-2\log2+2\gamma+c]\vert \widehat{q}_n(p)\vert^2=4\pi\alpha_0\sum_{n\in\Z}\im(e^{\imath\eta}\widehat{q}_{n+1}(p)\widehat{q}_{n}^*(p)),
\]
and the proof follows along the same lines. Similar modifications apply to \eqref{eq:series} and \eqref{eqsi}.

As a general remark, notice that the no-resonance condition in Section \ref{subsubsec:pzero} (and all along the paper) is formally the same, that is 
\[
N\omega\not=-\lambda_0\,,\qquad \forall N\in\N\,,
\]
with the eigenvalue $\lambda_0$ of the Hamiltonian $\mathcal H_{\alpha(0)}$ given by \eqref{eq:eigenv}.

The results of Section \ref{subsec:lefthalfplane} are also unchanged, up to minor modifications, essentially due to the new definition of the operator $\mathcal L$ in \eqref{newoperator}. This is the case, for instance, for Lemma \ref{lem-cisalva}. Furthermore, \eqref{eq:homogeneous-exp} becomes
\[
\widehat{\xi}(p)=-\alpha_0h(p)[e^{\imath\eta}\widehat{\xi}(p+\imath\omega)-e^{-\imath\eta}\widehat{\xi}(p-\imath\omega)]\,.
\]
The functions $\rho$ and $\Omega$ are defined again by \eqref{eq:rho} and \eqref{eq:Omega}, respectively, but now we have
\begin{equation}\label{eq:Nnew}
\mathcal N(\rho):=\frac{\alpha_0 he^{-\imath\eta}}{1+\alpha_0h e^{\imath\eta}\rho(\cdot+\imath\omega)}
\end{equation}
and
\begin{equation}\label{eq:Mnew}
\mathcal M(\Omega)=- \frac{\alpha_0h(\cdot-\imath\omega)e^{\imath\eta}}{1-\alpha_0 h(\cdot-\imath\omega)e^{-\imath\eta}\Omega(\cdot-\imath\omega)}\,.
\end{equation}
Then the proof of Lemma \ref{lem:contraction} is basically unchanged, as well as that of Corollary \ref{cor:extension_rhoOmega}. Notice that \eqref{eq:L^2} becomes
\begin{multline}
(\cL^2(\alpha_0,p)\widehat{\xi}(p))_n=\alpha^2_0h(p+\imath\omega n)
\left\{h\big(p+\imath\omega(n+1)\big)e^{\imath2\eta}\widehat{\xi}_{n+2}(p)+\right.\\
\left.-\left[h\big(p+\imath\omega(n+1)\big)+h\big(p+\imath\omega(n-1)\big)\right]\widehat{\xi}_n(p))+h\big(p+\imath\omega(n-1)\big)e^{-\imath2\eta}\widehat{\xi}_{n-2}(p)\right\}\,,
\end{multline}
but the proof of Prop. \ref{pro:distance_poles} is the same.

By inspection of the proof one sees that, despite the new definition of the operators in \eqref{eq:Nnew} and \eqref{eq:Mnew}, the poles of the resolvent $(\ID-\mathcal L(\alpha_0,p))^{-1}$ are determined imposing
\[
F(z,\alpha_0)=e^{\imath\eta}(\frac{z}{2p_{\mathrm{s}}\pi\imath}+\frac{2\pi\imath}{\log(p_{\mathrm{s}}-\imath\omega)+4\pi\beta_0+\mathcal{O}(\alpha^2_0)})+\mathcal{O}(\alpha_0^2)=0\,,
\]
so that the exponential factor can be dropped, and we recover \eqref{eq:F-expansion}. Of course, now the exact location of poles also depend on the parameter $c\in\R$, as $\beta_0$ does.

Concerning Section \ref{subsec:poleslargealpha}, it is easy to see that it does not need to be modified as the results presented there rely on abstract arguments. Last, Section \ref{sec:ionization} also follows along the same lines as in the particular case $\eta=0,c=0$.


\end{document}